\newtheorem{theorem}{Theorem}
\newtheorem{lemma}[theorem]{Lemma}
\newtheorem{proposition}[theorem]{Proposition}
\newtheorem{conjecture}[theorem]{Conjecture}
\newtheorem{corollary}[theorem]{Corollary}
\newtheorem{definition}[theorem]{Definition}
\newtheorem{problem}[theorem]{Problem}
\newtheorem{algorithm}[theorem]{Algorithm}
\newtheorem{procedure}[theorem]{Procedure}
\newtheorem{example}[theorem]{Example}
\newtheorem*{remark}{Remark}
\numberwithin{equation}{section}
\numberwithin{theorem}{section}
\newcommand{\abs}[1]{\vert{#1}\vert}
\newcommand{\set}[1]{\left\{#1\right\}}
\newcommand{\uni}[2]{{#1}\big\vert_{#2}}
\newcommand{\wH}[3]{\widetilde{\mathcal{H}}(\uni{#1}{#2}; {#3})}
\newcommand{\wFv}[3]{\widetilde{F}_v(\uni{#1}{#2}; {#3})}
\DeclareMathOperator{\G}{G}
\DeclareMathOperator{\V}{V}
\DeclareMathOperator{\E}{E}
\DeclareMathOperator{\N}{N}
\begin{document}
	
% -----------------------------------------------------------
	
\title{The vertex Folkman numbers\\ $F_v(a_1, ..., a_s; m - 1) = m + 9$,\\ if $\max\{a_1, ..., a_s\} = 5$}
		
\author{
	Aleksandar Bikov\thanks{Corresponding author} \hfill Nedyalko Nenov\\
	\let\thefootnote\relax\footnote{Email addresses: \texttt{asbikov@fmi.uni-sofia.bg}, \texttt{nenov@fmi.uni-sofia.bg}}\\
	\smallskip\\
	Faculty of Mathematics and Informatics\\
	Sofia University "St. Kliment Ohridski"\\
	5, James Bourchier Blvd.\\
	1164 Sofia, Bulgaria
}

\maketitle

\begin{abstract}
For a graph $G$ the expression $G \overset{v}{\rightarrow} (a_1, ..., a_s)$ means that for any $s$-coloring of the vertices of $G$ there exists $i \in \set{1, ..., s}$ such that there is a monochromatic $a_i$-clique of color $i$. The vertex Folkman numbers
$$F_v(a_1, ..., a_s; m - 1) = \min\{\vert\V(G)\vert : G \overset{v}{\rightarrow} (a_1, ..., a_s) \mbox{ and } K_{m - 1} \not\subseteq G\}.$$
are considered, where $m = \sum_{i = 1}^{s}(a_i - 1) + 1$.

With the help of computer we show that $F_v(2, 2, 5; 6) = 16$ and then we prove
$$F_v(a_1, ..., a_s; m - 1) = m + 9,$$
if $\max\{a_1, ..., a_s\} = 5$.

We also obtain the bounds
$$m + 9 \leq F_v(a_1, ..., a_s; m - 1) \leq m + 10,$$
if $\max\set{a_1, ..., a_s} = 6$.

\bigskip
\emph{Keywords: } Folkman number, Ramsey number, clique number, independence number, chromatic number
\bigskip

\end{abstract}
		
% -----------------------------------------------------------

%\linenumbers

\section{Introduction}

In this paper only finite, non-oriented graphs without loops and multiple edges are considered. The following notations are used:

$\V(G)$ - the vertex set of $G$;

$\E(G)$ - the edge set of $G$;

$\overline{G}$ - the complement of $G$;

$\omega(G)$ - the clique number of $G$;

$\alpha(G)$ - the independence number of $G$;

$\N(v), \N_G(v), v \in \V(G)$ - the set of all vertices of G adjacent to $v$;

$d(v), v \in \V(G)$ - the degree of the vertex $v$, i.e. $d(v) = \abs{\N(v)}$;

$G-v, v \in \V(G)$ - subgraph of $G$ obtained from $G$ by deleting the vertex $v$ and all edges incident to $v$;

$G-e, e \in \E(G)$ - subgraph of $G$ obtained from $G$ by deleting the edge $e$;

$G+e, e \in \E(\overline{G})$ - supergraph of G obtained by adding the edge $e$ to $\E(G)$.

$K_n$ - complete graph on $n$ vertices;

$C_n$ - simple cycle on $n$ vertices;

Let $G_1$ and $G_2$ be two graphs without common vertices. Denote by $G_1+G_2$ the graph $G$ for which: $\V(G) = \V(G_1) \cup \V(G_2)$ and $\E(G) = \E(G_1) \cup \E(G_2) \cup E'$, where $E' = \set{[x, y] : x \in \V(G_1), y \in \V(G_2)}$, i.e. $G$ is obtained by making every vertex of $G_1$ adjacent to every vertex of $G_2$.

All undefined terms can be found in \cite{W01}.\\

Let $a_1, ..., a_s$ be positive integers. The expression $G \overset{v}{\rightarrow} (a_1, ..., a_s)$ means that for any coloring of $\V(G)$ in $s$ colors ($s$-coloring) there exists $i \in \set{1, ..., s}$ such that there is a monochromatic $a_i$-clique of color $i$. In particular, $G \overset{v}{\rightarrow} (a_1)$ means that $\omega(G) \geq a_1$. Further, for convenience instead of $G \overset{v}{\rightarrow} (\underbrace{2, ..., 2}_r)$ we write $G \overset{v}{\rightarrow} (2_r)$ and instead of $G \overset{v}{\rightarrow} (\underbrace{2, ..., 2}_r, a_1, ..., a_s)$ we write $G \overset{v}{\rightarrow} (2_r, a_1, ..., a_s)$.

Define:

$\mathcal{H}(a_1, ..., a_s; q) = \set{ G : G \overset{v}{\rightarrow} (a_1, ..., a_s) \mbox{ and } \omega(G) < q }.$

$\mathcal{H}(a_1, ..., a_s; q; n) = \set{ G : G \in \mathcal{H}(a_1, ..., a_s; q) \mbox{ and } \abs{\V(G)} = n }.$

The vertex Folkman number $F_v(a_1, ..., a_s; q)$ is defined by the equality:

$F_v(a_1, ..., a_s; q) = \min\set{\abs{\V(G)} : G \in \mathcal{H}(a_1, ..., a_s; q)}$.

The graph $G$ is called an extremal graph in $\mathcal{H}(a_1, ..., a_s; q)$ if $G \in \mathcal{H}(a_1, ..., a_s;q )$ and $\abs{\V(G)} = F_v(a_1, ..., a_s; q)$. The set of all extremal graphs in $\mathcal{H}(a_1, ..., a_s; q)$ is denoted by $\mathcal{H}_{extr}(a_1, ..., a_s; q)$ .

The graph $G$ is called a maximal graph in $\mathcal{H}(a_1, ..., a_s; q)$ if $G \in \mathcal{H}(a_1, ..., a_s; q)$, but $G + e \not\in \mathcal{H}(a_1, ..., a_s; q), \forall e \in \E(\overline{G})$, i.e. $\omega(G + e) \geq q, \forall e \in \E(\overline{G})$. Also, $G$ is called a minimal graph in $\mathcal{H}(a_1, ..., a_s; q)$ if $G \in \mathcal{H}(a_1, ..., a_s; q)$, but $G - e \not\in \mathcal{H}(a_1, ..., a_s; q), \forall e \in \E(G)$, i.e. $G - e \overset{v}{\nrightarrow} (a_1, ..., a_s), \forall e \in \E(G)$. If $G$ is both maximal and minimal graph in $\mathcal{H}(a_1, ..., a_s; q)$, then we say that $G$ is a bicritical graph in $\mathcal{H}(a_1, ..., a_s; q)$. 

Folkman proves in \cite{Fol70} that:
\begin{equation}
\label{equation: F_v(a_1, ..., a_s; q) exists}
F_v(a_1, ..., a_s; q) \mbox{ exists } \Leftrightarrow q > \max\set{a_1, ..., a_s}.
\end{equation}
Other proofs of (\ref{equation: F_v(a_1, ..., a_s; q) exists}) are given in \cite{DR08} and \cite{LRU01}.\\
Obviously $F_v(a_1, ..., a_s; q)$ is a symmetric function of $a_1, ..., a_s$ and if $a_i = 1$, then
\begin{equation*}
F_v(a_1, ..., a_s; q) = F_v(a_1, ..., a_{i-1}, a_{i+1}, ..., a_s; q).
\end{equation*}
Therefore, it is enough to consider only such Folkman numbers $F_v(a_1, ..., a_s; q)$ for which
\begin{equation}
\label{equation: 2 leq a_1 leq ... leq a_s}
2 \leq a_1 \leq ... \leq a_s
\end{equation}
We call the numbers $F_v(a_1, ..., a_s; q)$ for which the inequalities (\ref{equation: 2 leq a_1 leq ... leq a_s}) hold, canonical vertex Folkman numbers.\\
In \cite{LU96} for arbitrary positive integers $a_1, ..., a_s$ are defined
\begin{equation}
\label{equation: m and p}
m(a_1, ..., a_s) = m = \sum\limits_{i=1}^s (a_i - 1) + 1 \quad \mbox{ and } \quad p = \max\set{a_1, ..., a_s}.
\end{equation}
Obviously $K_m \overset{v}{\rightarrow} (a_1, ..., a_s)$ and $K_{m - 1} \overset{v}{\nrightarrow} (a_1, ..., a_s)$. Therefore,
\begin{equation*}
F_v(a_1, ..., a_s; q) = m, \quad q \geq m + 1.
\end{equation*}
In accordance with (\ref{equation: F_v(a_1, ..., a_s; q) exists}),
\begin{equation*}
F_v(a_1, ..., a_s; m) \mbox{ exists } \Leftrightarrow m \geq p + 1.
\end{equation*}
For these numbers the following theorem is true:

\begin{theorem}
\label{theorem: F_v(a_1, ..., a_s; m) = m + p}
Let $a_1, ..., a_s$ be positive integers and $m$ and $p$ are defined by (\ref{equation: m and p}). If $m \geq p + 1$, then:
\begin{flalign*}
F_v(a_1, ..., a_s; m) = m + p, \ \mbox{\cite{LU96},\cite{LRU01}}. && \tag{a} 
\end{flalign*}
\begin{flalign*}
K_{m+p} - C_{2p + 1} = K_{m - p - 1} + \overline{C}_{2p + 1} && \tag{b}
\end{flalign*}
is the only extremal graph in $\mathcal{H}(a_1, ..., a_s; m)$, \ \cite{LRU01}.
\end{theorem}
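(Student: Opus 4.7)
The argument splits into three parts: upper bound via the explicit example, matching lower bound, and uniqueness.

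\emph{Upper bound.} I would verify directly that $G_0 := K_{m-p-1} + \overline{C}_{2p+1}$ belongs to $\mathcal{H}(a_1,\ldots,a_s;m)$. Clearly $|\V(G_0)| = m+p$ and $\omega(G_0) = (m-p-1) + \omega(\overline{C}_{2p+1}) = m-1$. For the arrow property, fix $a_s = p$ (WLOG) and an arbitrary $s$-coloring. Let $n_i$ count the color-$i$ vertices in the $K_{m-p-1}$ part and $W_i$ the set of color-$i$ vertices in the $\overline{C}_{2p+1}$ part. Because of the join, the largest monochromatic $i$-clique has size $n_i + \omega(\overline{C}_{2p+1}[W_i])$. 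If no color attains $a_i$, summing and using $\sum_i n_i = m-p-1$ together with $\sum_i (a_i-1) = m-1$ forces $\sum_i \omega(\overline{C}_{2p+1}[W_i]) \leq p$. But $\omega(\overline{C}_{2p+1}[W_i]) = \alpha(C_{2p+1}[W_i])$, and since the $W_i$ partition $C_{2p+1}$ into disjoint unions of paths whose total vertex count $2p+1$ is odd (so at least one such path has odd length), a short parity computation gives $\sum_i \alpha(C_{2p+1}[W_i]) \geq p+1$, a contradiction. The degenerate case where some $W_i$ equals all of $\V(\overline{C}_{2p+1})$ is ruled out because it would force $n_i < 0$ (using $a_i \leq p$).

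\emph{Lower bound.} The key reduction is the standard splitting implication
\[
G \overset{v}{\rightarrow} (a_1,\ldots,a_s) \implies G \overset{v}{\rightarrow} (2, a_1-1, a_2, \ldots, a_s),
\]
obtained by merging the first two colors of any $(s+1)$-coloring and partitioning the resulting monochromatic $a_1$-clique between them. Iterating this while protecting one coordinate equal to $p$ gives $\mathcal{H}(a_1,\ldots,a_s;m) \subseteq \mathcal{H}(2_{m-p}, p; m)$, so it suffices to prove $F_v(2_r, p; r+p) \geq r+2p$ with $r := m-p$. For a vertex-minimum $G$ in this class, minimality supplies, for each $v \in \V(G)$, a partition $\V(G) - v = I_1 \sqcup \cdots \sqcup I_r \sqcup U$ with each $I_j$ independent in $G$ and $\omega(G[U]) \leq p-1$. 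Combining these local partitions with the global constraint $\omega(G) \leq r+p-1$ via a double-counting argument (most naturally analyzed at a vertex of minimum degree and its neighborhood) then yields $|\V(G)| \geq r+2p$.

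\emph{Uniqueness.} If $G$ is extremal of size $m+p$, tightness in the bounds above forces $G$ to contain $m-p-1$ universal vertices, so $G = K_{m-p-1} + H$ with $|\V(H)| = 2p+1$ and $\omega(H) = p$. The arrow property restricted to $H$ yields $H \overset{v}{\rightarrow} (2, p)$, and a standard characterization identifies the unique graph on $2p+1$ vertices with $\omega = p$ satisfying this as $\overline{C}_{2p+1}$.

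The main obstacle is the lower bound for $F_v(2_r, p; r+p)$: translating the local non-arrow partitions at each deleted vertex into the sharp global count $|\V(G)| \geq r+2p$ is delicate and requires careful combinatorial bookkeeping. Everything else reduces to the upper-bound verification (a clean parity count on the odd cycle) or to this single reduction step together with a structural analysis of the extremal example.
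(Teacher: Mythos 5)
This statement is Theorem~1.1 of the paper, which is quoted from \cite{LU96} and \cite{LRU01}; the paper gives no proof of its own, so your attempt can only be judged on its own merits. The upper-bound half of your argument is correct and complete: the clique-number computation, the reduction of the arrow property to $\sum_i \alpha(C_{2p+1}[W_i]) \geq p+1$, the parity count on the odd cycle, and the treatment of the degenerate case $W_i = \V(\overline{C}_{2p+1})$ (where $a_i \leq p$ is genuinely needed) all check out. The splitting implication $G \overset{v}{\rightarrow} (a_1, \ldots, a_s) \Rightarrow G \overset{v}{\rightarrow} (2, a_1 - 1, a_2, \ldots, a_s)$ and the resulting reduction to $F_v(2_{m-p}, p; m)$ are also correct (this is exactly the paper's implication (4.1)--(4.2)).

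The genuine gap is the lower bound $F_v(2_r, p; r+p) \geq r + 2p$, which is the entire content of part (a) once the reduction is made, and which you do not prove: ``combining these local partitions with the global constraint \ldots via a double-counting argument \ldots then yields $|\V(G)| \geq r+2p$'' is a statement of intent, not an argument, and you yourself flag it as the main obstacle. The local partitions coming from $G - v \overset{v}{\nrightarrow} (2_r, p)$ do not combine in any obvious way with $\omega(G) \leq r+p-1$; the published proofs instead go through $\chi(G) \geq m$ plus a structural analysis of $m$-chromatic graphs on at most $m+p-1$ vertices, and none of that machinery appears in your sketch. The uniqueness part (b) inherits this gap (``tightness in the bounds above'' refers to bounds you have not established) and has an additional circularity: after peeling off the $m-p-1$ universal vertices you appeal to ``a standard characterization'' of the unique $(2p+1)$-vertex graph $H$ with $\omega(H) = p$ and $H \overset{v}{\rightarrow} (2, p)$ as $\overline{C}_{2p+1}$ --- but that characterization \emph{is} statement (b) in the base case $m = p+1$, $(a_1, a_2) = (2, p)$, so it must be proved independently rather than invoked. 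As it stands, the proposal establishes $F_v(a_1, \ldots, a_s; m) \leq m+p$ and the reduction step, but neither the matching lower bound nor uniqueness.
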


Other proofs of Theorem \ref{theorem: F_v(a_1, ..., a_s; m) = m + p} are given in \cite{Nen00} and \cite{Nen01}.\\
In accordance with (\ref{equation: F_v(a_1, ..., a_s; q) exists}),
\begin{equation}
\label{equation: F_v(a_1, ..., a_s; m - 1) exists}
F_v(a_1, ..., a_s; m - 1) \mbox{ exists } \Leftrightarrow m \geq p + 2.
\end{equation}
Let $m$ and $p$ be defined by (\ref{equation: m and p}). Then
\begin{equation}
\label{equation: F_v(a_1, ..., a_s, m - 1) = ...}
F_v(a_1, ..., a_s, m - 1) = \begin{cases}
m + 4, & \emph{if $p = 2$ and $m \geq 6$, \cite{Nen83}}\\
m + 6, & \emph{if $p = 3$ and $m \geq 6$, \cite{Nen02}}\\
m + 7, & \emph{if $p = 4$ and $m \geq 6$, \cite{Nen02}}.
\end{cases}
\end{equation}
The remaining canonical numbers $F_v(a_1, ..., a_s; m - 1)$, $p \leq 4$ are: $F_v(2, 2, 2; 3) = 11$, \cite{Myc55} and \cite{Chv79}, $F_v(2, 2, 2, 2; 4) = 11$, \cite{Nen84} (see also \cite{Nen98}), $F_v(2, 2, 3; 4) = 14$, \cite{Nen00} and \cite{CR06}, $F_v(3, 3; 4) = 14$, \cite{Nen81} and \cite{PRU99}. From these facts it becomes clear that we know all Folkman numbers of the form $F_v(a_1, ..., a_s; m - 1)$ when $\max\set{a_1, ..., a_s} \leq 4$. The only known canonical vertex Folkman number of the form $F_v(a_1, ..., a_s, m - 1)$, $p \geq 5$ is $F_v(3, 5; 6) = 16$, \cite{SLPX12}. Since we know all the numbers $F_v(a_1, ..., a_s; m - 1)$ when $p = 2$, further we shall assume that $p \geq 3$. The following bounds for these numbers are known
\begin{equation}
\label{equation: m + p + 2 leq F_v(a_1, ..., a_s; m - 1) leq m + 3p}
m + p + 2 \leq F_v(a_1, ..., a_s; m - 1) \leq m + 3p, \quad p\geq 3.
\end{equation}
The lower bound is obtained in \cite{Nen00} and the upper bound is obtained in \cite{KN06a}. It is easy to see that in the border case $m = p + 2$ when $p \geq 3$ there are only two canonical numbers of the form $F_v(a_1, ..., a_s; m - 1)$, namely $F_v(2, 2, p; p + 1)$ and $F_v(3, p; p+1)$.\\
With the help of the numbers $F_v(2, 2, p; p + 1)$, the lower bound in (\ref{equation: m + p + 2 leq F_v(a_1, ..., a_s; m - 1) leq m + 3p}) can be improved.

\begin{theorem}
\label{theorem: F_v(a_1, ..., a_s; m - 1) geq m + p + 3}
\cite{Nen02}
Let $a_1, ..., a_s$ be positive integers, $m$ and $p$ are defined by (\ref{equation: m and p}), $p \geq 3$ and $m \geq p + 2$. If $F_v(2, 2, p; p + 1) \geq 2p + 5$, then
\begin{equation*}
F_v(a_1, ..., a_s; m - 1) \geq m + p + 3.
\end{equation*}
\end{theorem}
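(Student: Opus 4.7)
I plan to argue by contradiction: suppose there exists $G \in \mathcal{H}(a_1, \ldots, a_s; m - 1)$ with $\abs{\V(G)} \leq m + p + 2$, and eventually derive $F_v(2,2,p; p+1) \leq 2p+4$, contradicting the hypothesis. By symmetry I relabel so that $a_s = p$. The first step is a standard monotonicity reduction to the canonical case $(a_1, \ldots, a_s) = (\underbrace{2, \ldots, 2}_{k}, p)$ with $k = m - p \geq 2$: splitting an $a_i$ into $(a_i - 1, 2)$ preserves the arrow property, since any monochromatic $a_i$-clique contains either an $(a_i - 1)$-clique or a $K_2$ in the sub-colors by pigeonhole. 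Iterating for each $i < s$, we obtain $G \overset{v}{\rightarrow}(2_k, p)$, so it suffices to establish the claim under this reduced arrow.

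The crux is the inequality
\[
F_v(2_k, p; k + p - 1) \;\geq\; F_v(2, 2, p; p + 1) + (k - 2),
\]
which combined with the hypothesis $F_v(2, 2, p; p + 1) \geq 2p + 5$ yields the required bound $\abs{\V(G)} \geq m + p + 3$. To prove the inequality I choose a set $S \subseteq \V(G)$ of size $k - 2$ and consider $H := G - S$, aiming for $H \in \mathcal{H}(2, 2, p; p + 1)$. The arrow property $H \overset{v}{\rightarrow} (2, 2, p)$ holds for any such $S$ by a direct extension argument: from any hypothetical bad $3$-coloring $\V(H) = V_1 \sqcup V_2 \sqcup V_3$ with $V_1, V_2$ independent and $V_3$ free of $K_p$, build a $(k + 1)$-coloring of $G$ by placing each vertex of $S$ as a singleton in one of the first $k - 2$ ``$2$''-colors, then assigning $V_1$ to color $k - 1$, $V_2$ to color $k$, and $V_3$ to the ``$p$''-color. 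Each ``$2$''-color class is either a singleton or an independent set (hence contains no $K_2$), and the ``$p$''-color class contains no $K_p$, contradicting $G \overset{v}{\rightarrow}(2_k, p)$.

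The principal obstacle is choosing $S$ so that additionally $\omega(H) \leq p$, i.e., $S$ meets every $(p + 1)$-clique of $G$. The natural candidate is to let $K$ be a maximum clique of $G$ and take $S \subseteq K$ of size $|K| - p$, so that $K \setminus S$ is a $p$-clique; the bound $\omega(G) \leq m - 2$ then gives $|S| \leq k - 2$. One must rule out $(p+1)$-cliques of $G$ disjoint from $S$: such a stray clique, combined with $K \setminus S$ via an analysis of common neighborhoods, should force either a clique of size $m - 1$ in $G$---contradicting $\omega(G) \leq m - 2$---or a configuration incompatible with the near-extremal count $\abs{\V(G)} \leq m + p + 2$ and the arrow $G \overset{v}{\rightarrow}(2_k, p)$; this is the technical heart of the proof. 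Once such $S$ is produced, $H \in \mathcal{H}(2, 2, p; p + 1)$ satisfies $\abs{\V(H)} \leq (m + p + 2) - (k - 2) = 2p + 4$, contradicting the hypothesis and completing the proof.
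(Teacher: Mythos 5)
Your reduction to $G \overset{v}{\rightarrow} (2_k, p)$ with $k = m - p$ is fine (it is the left-hand inequality of Theorem~\ref{theorem: F_v(2_(m - p), p; q) leq F_v(a_1, ..., a_s; q) leq wFv(m)(p)(q)}), and the extension argument showing $G - S \overset{v}{\rightarrow} (2,2,p)$ for an arbitrary $(k-2)$-set $S$ is also correct. The fatal problem is the inequality you call the crux,
\begin{equation*}
F_v(2_k, p; k+p-1) \;\geq\; F_v(2,2,p;p+1) + (k-2),
\end{equation*}
which is false within the range of the theorem. Take $p = 3$, which satisfies the hypothesis since $F_v(2,2,3;4) = 14 \geq 11 = 2p+5$, and $k = 3$: by (\ref{equation: F_v(a_1, ..., a_s, m - 1) = ...}) one has $F_v(2,2,2,3;5) = m + 6 = 12$, whereas your inequality would give $F_v(2,2,2,3;5) \geq 14 + 1 = 15$. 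The theorem's conclusion $m + p + 3 = 12$ is exactly attained here, so no argument establishing your stronger intermediate claim can be correct. For $p \geq 4$ your inequality is precisely Conjecture~\ref{conjecture: F_v(2_r, p; r + p - 1) = F_v(2, 2, p; p + 1) + r - 2} of this paper (equivalently $r_0(p) = 2$), which is open in general and is verified for $p = 5$ only by heavy computation; Example~\ref{example: r_0(2) = 4, r_0(3) = 3, r_0(4) = 2} records that it fails for $p = 2$ and $p = 3$.

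Correspondingly, the step you defer as ``the technical heart'' --- choosing $S$ of size $k-2$ that meets every $(p+1)$-clique of $G$ --- cannot be carried out in general; if it could, the false inequality above would follow. Theorem~\ref{theorem: min_(r geq 2) set(F_v(2_r, p; r + p - 1) - r) = F_v(2_(r_0), p; r_0 + p - 1) - r_0}(c) isolates exactly this obstruction: deleting part of the vertex set so as to lower both the number of colors and the clique bound requires a partition $\V(G) = V_1 \cup V_2$ with $V_1$ independent and $G[V_2]$ free of large cliques, i.e.\ a failure of a $(2,\cdot)$-arrowing, and extremal graphs need not admit one. The present paper only cites the theorem from \cite{Nen02} without reproducing its proof, but whatever that argument is, it must use the hypothesis $F_v(2,2,p;p+1) \geq 2p+5$ in a way that does not pass through a one-vertex-at-a-time descent to the base case $m = p+2$.
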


Some graphs, with which upper bounds for $F_v(3, p; p + 1)$ are obtained, can be used for obtaining general upper bounds for $F_v(a_1, ..., a_s; m - 1)$. For example, the graph $\Gamma_p$ from \cite{Nen00}, which witnesses the bound $F_v(3, p; p + 1) \leq 4p + 2, p \geq 3$, helps to obtain the upper bound in (\ref{equation: m + p + 2 leq F_v(a_1, ..., a_s; m - 1) leq m + 3p}). Thus, obtaining bounds for the numbers $F_v(a_1, ..., a_s; m -1)$ and computing some of them is related with computation and obtaining bounds for the numbers $F_v(2, 2, p; p + 1)$ and $F_v(3, p; p + 1)$.
It is easy to see that $G \overset{v}{\rightarrow} (3, p) \Rightarrow G \overset{v}{\rightarrow} (2, 2, p)$. Therefore, the following inequality holds:
\begin{equation}
\label{equation: F_v(2, 2, p; p + 1) leq F_v(3, p; p + 1)}
F_v(2, 2, p; p + 1) \leq F_v(3, p; p + 1), \quad p \geq 3.
\end{equation}
However, we do not know if these numbers are different. For now we do know that they are equal if $p = 3$ or $p = 4$.

\begin{problem}
\label{problem: F_v(2, 2, p; p + 1) < F_v(3, p; p + 1)}
\cite{KN06c}
Does there exist a positive integer $p$ for which the inequality (\ref{equation: F_v(2, 2, p; p + 1) leq F_v(3, p; p + 1)}) is strict?
\end{problem}

In this paper we prove $F_v(2, 2, 5; 6) = 16$ and a new proof of the equality $F_v(3, 5; 6) = 16$, \cite{SLPX12} is given. Hence for $p = 5$ there is an equality in (\ref{equation: F_v(2, 2, p; p + 1) leq F_v(3, p; p + 1)}). We find all extremal graphs in $\mathcal{H}(2, 2, 5; 6)$ and in $\mathcal{H}(3, 5; 6)$. With the help of these results, we compute the numbers $F_v(a_1, ..., a_s; m - 1) = m + 9$, when $\max\set{a_1, ..., a_s} = 5$. In the case $\max\set{a_1, ..., a_s} = 6$ we improve the bounds (\ref{equation: m + p + 2 leq F_v(a_1, ..., a_s; m - 1) leq m + 3p}) by proving $m + 9 \leq F_v(a_1, ..., a_s; m - 1) \leq m + 10$. The exact formulations of the obtained results are as follows:

\begin{theorem}
\label{theorem: abs(mathcal(H)(2, 2, 5; 6; 16)) = 147}
$\abs{\mathcal{H}(2, 2, 5; 6; 16)} = 147$. Some properties of the graphs in $\mathcal{H}(2, 2, 5; 6; 16)$ are listed in Table \ref{table: H(2, 2, 5; 6; 16) properties}. Among them there are 4 bicritical graphs, which are shown in Figure \ref{figure: H(2, 2, 5; 6; 16) bicritical}, and some of their properties are listed in Table \ref{table: H(2, 2, 5; 6; 16) bicritical properties}.
\end{theorem}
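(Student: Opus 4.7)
The claim is an exhaustive enumeration result, so the proof is inherently computational, and my plan has a theoretical reduction phase followed by a computer search.

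First I would establish that $F_v(2,2,5;6)=16$. The lower bound should follow from the general bound (\ref{equation: m + p + 2 leq F_v(a_1, ..., a_s; m - 1) leq m + 3p}) combined with a short computer-assisted check that no graph on $\leq 15$ vertices lies in $\mathcal{H}(2,2,5;6)$; the orders $n\leq 10$ are trivial from $\omega(G)\geq 5$, and the remaining five orders need the same sort of pruning described below, on a much smaller scale. The upper bound $\leq 16$ is witnessed by exhibiting one explicit 16-vertex graph that is $K_6$-free and arrows $(2,2,5)$; since the theorem itself promises $147$ such graphs, producing a single example will suffice.

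The core of the proof is enumerating $\mathcal{H}(2,2,5;6;16)$. Blind enumeration of 16-vertex graphs is infeasible, so I would prune via the following facts. Any $G \in \mathcal{H}(2,2,5;6)$ satisfies $\omega(G)=5$ (from $G\overset{v}{\rightarrow}(2,2,5)$ together with $\omega(G)<6$), and the arrow property is equivalent to: for every pair of disjoint independent sets $I_1,I_2\subseteq\V(G)$, the induced subgraph $G[\V(G)\setminus(I_1\cup I_2)]$ contains a $K_5$. This reformulation both supplies fast rejection tests and guides construction, since it forces complements of every non-trivial independent set to be rich in 5-cliques. I would build candidate graphs iteratively, either by adding vertices to smaller admissible subgraphs or by edge-addition from minimal candidates, applying the structural test at each stage and using canonical-form isomorph rejection (e.g.\ via \texttt{nauty}) to avoid duplication. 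Vertex-local restrictions on $\N(v)$ and $\V(G)\setminus\N[v]$ for each $v$ are the main instruments for early pruning.

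After producing the candidate list, I would verify arrowing for each graph directly through the independent-sets formulation, double-check $\omega(G)=5$ and the count $147$, and populate Table \ref{table: H(2, 2, 5; 6; 16) properties}. For the bicritical subclass, for each of the 147 graphs I would test that deleting every edge destroys the arrow property and that adding every non-edge creates a $K_6$; exactly four graphs will pass both tests, yielding Figure \ref{figure: H(2, 2, 5; 6; 16) bicritical} and Table \ref{table: H(2, 2, 5; 6; 16) bicritical properties}.

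The principal obstacle is managing the size of the search tree while guaranteeing completeness: even with structural pruning and canonical-form filtering, the enumeration must be carefully engineered, and every rejection rule used must be rigorously justified so that no extremal graph is missed. A secondary technical point is that naive $3^{16}\approx 4\cdot 10^7$ coloring enumeration per candidate is borderline expensive; reformulating the arrow test via maximal independent sets keeps the per-graph verification well under a second and makes the final verification pass over all 147 graphs completely tractable.
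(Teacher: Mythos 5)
Your overall plan --- a computer enumeration driven by structural pruning and isomorph rejection --- is the right kind of argument, and your reformulation of $G \overset{v}{\rightarrow} (2,2,5)$ via pairs of disjoint independent sets is exactly the test one uses for verification. But the entire content of this theorem is a generation procedure that is \emph{provably exhaustive}, and your proposal does not supply one: you name the obligation (``every rejection rule used must be rigorously justified so that no extremal graph is missed'') without discharging it. The paper's scheme rests on two ideas that are absent from your sketch. First, it suffices to generate the \emph{maximal} graphs of the class (those in which adding any edge creates a $K_6$), since every other member is recovered from a maximal one by deleting edges; and for a maximal $G \in \mathcal{H}(2_r,p;q;n)$ with independent vertices $v_1,\dots,v_k$, the graph $H = G - \{v_1,\dots,v_k\}$ lies in $\mathcal{H}(2_{r-1},p;q;n-k)$ (one fewer $2$, because an independent set can absorb an entire color class), $H$ has the property that adding any edge creates a new $K_{q-1}$, and each $\N_G(v_i)$ is a \emph{maximal} $K_{q-1}$-free subset of $\V(H)$. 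This is what turns ``add vertices to smaller admissible subgraphs'' into a finite and complete search: the maximal graphs in $\mathcal{H}(2,2,5;6;16)$ with $\alpha(G)\geq 3$ are obtained by attaching $3$ independent vertices, with neighborhoods ranging over maximal $K_5$-free sets, to those graphs of $\mathcal{H}(2,5;6;13)$ in which every added edge creates a new $K_5$; and $\mathcal{H}(2,5;6;13)$ is built the same way from $\mathcal{H}(5;6;10)$. Your proposal never identifies the smaller class into which one recurses, so the recursion has no base and no completeness proof.

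Second, this descent only reaches graphs with independence number at least $3$; a maximal graph with $\alpha(G)=2$ cannot arise by adding three independent vertices to anything. The paper closes that case by observing that such a graph must lie in $\mathcal{R}(3,6;16)$, the known catalogue of Ramsey $(3,6)$-graphs on $16$ vertices ($2576$ graphs), and testing those directly; the analogous step is needed at the $13$-vertex level as well. Without an argument of this kind your enumeration could silently miss low-independence members of the class, so neither the count $147$ nor the list of $4$ bicritical graphs would be certified. (A smaller remark: you front-load a proof of $F_v(2,2,5;6)=16$, but in the paper that equality is a \emph{consequence} of the present enumeration rather than a prerequisite for it, and the general lower bound $m+p+2$ only gives $14$ here, so invoking it buys nothing.)
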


\begin{theorem}
\label{theorem: F_v(2, 2, 5; 6) = 16}
$F_v(2, 2, 5; 6) = 16$ and the graphs from Theorem \ref{theorem: abs(mathcal(H)(2, 2, 5; 6; 16)) = 147} are all the graphs in $\mathcal{H}_{extr}(2, 2, 5; 6)$.
\end{theorem}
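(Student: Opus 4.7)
The plan is to combine Theorem~\ref{theorem: abs(mathcal(H)(2, 2, 5; 6; 16)) = 147} with a computer verification that $\mathcal{H}(2, 2, 5; 6; n) = \emptyset$ for every $n \le 15$. The upper bound $F_v(2, 2, 5; 6) \le 16$ is immediate from Theorem~\ref{theorem: abs(mathcal(H)(2, 2, 5; 6; 16)) = 147}, which exhibits $147$ graphs in $\mathcal{H}(2, 2, 5; 6; 16)$; any one of them (for instance one of the four bicritical graphs from Figure~\ref{figure: H(2, 2, 5; 6; 16) bicritical}) is a witness. Once the matching lower bound $F_v(2, 2, 5; 6) \ge 16$ is established, every graph in $\mathcal{H}(2, 2, 5; 6; 16)$ is by definition extremal, so $\mathcal{H}_{extr}(2, 2, 5; 6)$ coincides with the list of $147$ graphs in Theorem~\ref{theorem: abs(mathcal(H)(2, 2, 5; 6; 16)) = 147}, which is the second assertion of the theorem.

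For the lower bound, the estimate (\ref{equation: m + p + 2 leq F_v(a_1, ..., a_s; m - 1) leq m + 3p}) instantiated at $m = 7$, $p = 5$ already gives $F_v(2, 2, 5; 6) \ge 14$, so only $n \in \{14, 15\}$ need to be excluded. I would first derive necessary structural conditions on any hypothetical $G \in \mathcal{H}(2, 2, 5; 6; n)$: from $G \overset{v}{\rightarrow} (2, 2, 5)$ combined with $\omega(G) \le 5$ one obtains an upper bound on $\alpha(G)$ (an overly large independent set could serve as one of the two classes forced to be independent, leaving a complement too small to arrow $(2, 5)$), a lower bound on $\chi(G)$, and, via standard neighborhood reductions that invoke the known values in (\ref{equation: F_v(a_1, ..., a_s, m - 1) = ...}) (notably $F_v(2, 2, 4; 5) = 13$ and $F_v(2, 4; 5)$), lower bounds on $\delta(G)$ and on the edge count. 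One may further restrict, without loss of generality, to graphs that are minimal in $\mathcal{H}(2, 2, 5; 6)$.

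The core of the proof is then an exhaustive computer enumeration, patterned on the techniques used to establish the cases $p \le 4$ summarised in (\ref{equation: F_v(a_1, ..., a_s, m - 1) = ...}) and the case $F_v(3, 5; 6) = 16$ in \cite{SLPX12}. Working up to isomorphism with a canonical-labelling tool such as {\tt nauty}, one generates all $K_6$-free graphs on $n = 14$ and $n = 15$ vertices meeting the structural conditions of the previous paragraph, typically by incremental vertex addition starting from a fixed induced subgraph (a maximum $5$-clique, a maximal independent set, or the neighborhood of a vertex of extremal degree). For each candidate, one tests $G \overset{v}{\nrightarrow} (2, 2, 5)$ by exhibiting a partition $\V(G) = I_1 \cup I_2 \cup U$ with $I_1, I_2$ independent and $\omega(G[U]) \le 4$; this reduces to enumerating pairs of maximal independent sets and checking $K_5$-freeness of the residual induced subgraph.

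The main obstacle is the combinatorial size of the $K_6$-free graph class on $15$ vertices, for which a naive sweep is infeasible. Making the enumeration practical requires aggressive pruning from the degree and clique-structure constraints, canonical-form isomorph rejection, and an incremental construction that maintains the arrow-relation-relevant invariants on the fly (so that the arrow test never has to be performed on the vast majority of candidates). Once the enumeration certifies $\mathcal{H}(2, 2, 5; 6; n) = \emptyset$ for $n \le 15$, the lower bound follows and, combined with the upper bound of the first paragraph, completes the proof.
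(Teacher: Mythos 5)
Your decomposition --- upper bound from the non-emptiness established in Theorem~\ref{theorem: abs(mathcal(H)(2, 2, 5; 6; 16)) = 147}, lower bound by showing $\mathcal{H}(2,2,5;6;n)=\emptyset$ for $n\le 15$, and then identifying $\mathcal{H}_{extr}(2,2,5;6)$ with the $147$ listed graphs --- is the same as the paper's, and the first and third steps are handled exactly as there. For the lower bound, however, you propose a fresh exhaustive enumeration of structurally constrained $K_6$-free graphs on $14$ and $15$ vertices, which you yourself flag as computationally daunting and whose exhaustiveness rests on structural lemmas and pruning rules that you only sketch. The paper sidesteps all of this with a cheap observation: adding an isolated vertex preserves both the arrowing property and the clique number, so any $H\in\mathcal{H}(2,2,5;6;15)$ would give $H\cup K_1\in\mathcal{H}(2,2,5;6;16)$, and hence $H$ would have to occur among the one-vertex-deleted subgraphs of the $147$ graphs already enumerated. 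Checking those (at most $147\times 16$) deletions for membership in $\mathcal{H}(2,2,5;6)$ is trivial and certifies $\mathcal{H}(2,2,5;6;15)=\emptyset$; the same monotonicity disposes of every $n<15$ as well, so you need not treat $n=14$ separately and the appeal to (\ref{equation: m + p + 2 leq F_v(a_1, ..., a_s; m - 1) leq m + 3p}) is superfluous. Your route is viable in principle --- the paper's own remark describes a kindred direct computation at $n=15$ via the add-independent-vertices machinery --- but as written it leaves the hardest part (a provably complete and feasible generation at $15$ vertices) unresolved, whereas the completeness of the $16$-vertex census already in hand makes that entire effort unnecessary.
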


\begin{corollary}
\label{corollary: F_v(3, 5; 6) = 16}
\cite{SLPX12}
$F_v(3, 5; 6) = 16$.
\end{corollary}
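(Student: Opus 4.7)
The plan is to sandwich $F_v(3,5;6)$ between $16$ and $16$ using the machinery already assembled.

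For the lower bound, I would invoke inequality (\ref{equation: F_v(2, 2, p; p + 1) leq F_v(3, p; p + 1)}) with $p = 5$, which gives
\[
16 = F_v(2,2,5;6) \leq F_v(3,5;6),
\]
where the equality on the left is Theorem \ref{theorem: F_v(2, 2, 5; 6) = 16}. This part is immediate, since any $6$-clique-free graph $G$ with $G\overset{v}{\rightarrow}(3,5)$ also satisfies $G\overset{v}{\rightarrow}(2,2,5)$: a $2$-coloring is a special $3$-coloring where two color classes have no $2$-clique requirement, and the existence of either a monochromatic triangle or a monochromatic $5$-clique forces the conclusion for the $(2,2,5)$-arrowing.

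For the upper bound, I would exhibit a concrete graph $H$ on $16$ vertices with $\omega(H) \leq 5$ and $H \overset{v}{\rightarrow} (3,5)$. The natural place to look is inside $\mathcal{H}(2,2,5;6;16)$: Theorem \ref{theorem: abs(mathcal(H)(2, 2, 5; 6; 16)) = 147} enumerates all $147$ extremal graphs for the weaker arrowing property, and in particular the four bicritical graphs displayed in Figure \ref{figure: H(2, 2, 5; 6; 16) bicritical} are the most constrained candidates. I would check each of these $147$ graphs (or at least the four bicritical ones) to see whether the stronger property $G \overset{v}{\rightarrow} (3,5)$ holds: for each $G$, scan over all independent sets $A\subseteq V(G)$ of size $\alpha(G)$ or larger whose removal could leave a $5$-clique-free subgraph; equivalently, verify that every independent set in $G$ of appropriate size has complement containing a $5$-clique. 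If at least one $H$ passes this test, then $H \in \mathcal{H}(3,5;6;16)$ and hence $F_v(3,5;6) \leq 16$, combining with the lower bound to give the corollary.

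The main obstacle is the upper bound witness: it requires either a computer check of the arrowing property on at least one specific $16$-vertex graph, or an ad-hoc verification exploiting the structure of the bicritical graphs from Figure \ref{figure: H(2, 2, 5; 6; 16) bicritical}. Since the authors devote Theorem \ref{theorem: abs(mathcal(H)(2, 2, 5; 6; 16)) = 147} to enumerating these graphs explicitly, I expect the corollary to be proved by pointing to those among the $147$ that also $(3,5)$-arrow, thus simultaneously giving the new proof of $F_v(3,5;6) = 16$ claimed in the introduction and answering Problem \ref{problem: F_v(2, 2, p; p + 1) < F_v(3, p; p + 1)} negatively for $p = 5$.
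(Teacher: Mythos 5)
Your overall route is exactly the paper's: the lower bound comes from $F_v(2,2,5;6)=16$ together with inequality (\ref{equation: F_v(2, 2, p; p + 1) leq F_v(3, p; p + 1)}), and the upper bound comes from exhibiting a witness in $\mathcal{H}(3,5;6;16)$ found among the $147$ graphs of Theorem \ref{theorem: abs(mathcal(H)(2, 2, 5; 6; 16)) = 147} (the paper points to Figure \ref{figure: H(3, 5; 6; 16)}; Theorem \ref{theorem: abs(mathcal(H)(3, 5; 6; 16)) = 4} records that exactly four of the $147$ qualify). However, two details of your upper-bound step are wrong as written. First, your verification criterion --- ``every independent set in $G$ of appropriate size has complement containing a $5$-clique'' --- certifies $G \overset{v}{\rightarrow} (2,5)$, not $G \overset{v}{\rightarrow} (3,5)$. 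For the $(3,5)$-arrowing you must range over all \emph{triangle-free} vertex subsets $V_1$ (which need not be independent) and check that $V(G)\setminus V_1$ contains a $K_5$; all $147$ graphs trivially pass your independent-set test since they already arrow $(2,2,5)$, so that test distinguishes nothing.

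Second, your fallback of checking only the four bicritical graphs would fail outright: by Tables \ref{table: H(2, 2, 5; 6; 16) bicritical properties} and \ref{table: H(3, 5; 6; 16) properties}, the bicritical graphs are $G_{74}, G_{78}, G_{134}, G_{135}$, while the members of $\mathcal{H}(3,5;6;16)$ are $G_{50}, G_{51}, G_{81}, G_{146}$ --- the two sets are disjoint (note, e.g., that the bicritical graphs have $\chi=7$ while the $(3,5)$-arrowing ones have $\chi=8$). Being hardest to remain in $\mathcal{H}(2,2,5;6)$ under edge deletion is not the same as satisfying the stronger arrowing property. Your primary plan of scanning all $147$ graphs does succeed once the verification criterion is corrected, and then the conclusion follows as you state.
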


\begin{proof}
From Theorem \ref{theorem: F_v(2, 2, 5; 6) = 16} and (\ref{equation: F_v(2, 2, p; p + 1) leq F_v(3, p; p + 1)}) we obtain $F_v(3, 5; 6) \geq 16$. Since among the graphs from Theorem \ref{theorem: abs(mathcal(H)(2, 2, 5; 6; 16)) = 147} there are such, which belong to $\mathcal{H}(3, 5; 6)$ (see Figure \ref{figure: H(3, 5; 6; 16)}), it follows that $F_v(3, 5; 6) \leq 16$.
\end{proof}

\begin{theorem}
\label{theorem: abs(mathcal(H)(3, 5; 6; 16)) = 4}
$\abs{\mathcal{H}(3, 5; 6; 16)} = 4$. The graphs from $\mathcal{H}(3, 5; 6; 16)$ are shown in Figure \ref{figure: H(3, 5; 6; 16)} and some of their properties are listed in Table \ref{table: H(3, 5; 6; 16) properties}.
\end{theorem}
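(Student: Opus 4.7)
The plan is to exploit the containment $\mathcal{H}(3,5;6;16) \subseteq \mathcal{H}(2,2,5;6;16)$, which is immediate from the implication $G \overset{v}{\rightarrow}(3,5) \Rightarrow G \overset{v}{\rightarrow}(2,2,5)$ used in (\ref{equation: F_v(2, 2, p; p + 1) leq F_v(3, p; p + 1)}), together with the common clique restriction $\omega(G) < 6$ and the common order $16$. Theorem \ref{theorem: abs(mathcal(H)(2, 2, 5; 6; 16)) = 147} already supplies the complete list of 147 candidate graphs, so the entire task reduces to deciding, for each candidate, whether the stronger arrowing property $G \overset{v}{\rightarrow}(3,5)$ actually holds. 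Corollary \ref{corollary: F_v(3, 5; 6) = 16} together with Figure \ref{figure: H(3, 5; 6; 16)} already establishes that at least four candidates pass, so the real content to be extracted is the ``at most four'' bound.

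First I would recast the test concretely: $G \overset{v}{\nrightarrow}(3,5)$ holds exactly when $V(G)$ admits a partition $V(G) = V_1 \cup V_2$ with $G[V_1]$ triangle-free and $G[V_2]$ being $K_5$-free. Thus for each of the 147 graphs it suffices to search for such a bipartition; the graph arrows $(3,5)$ if and only if no such partition exists. Since every graph on the list is known to arrow $(2,2,5)$, any failed partition must be one in which $G[V_1]$ is triangle-free but not bipartite; this gives a small structural restriction that can be exploited by the search.

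Next I would implement and run this search by computer. With $|V(G)| = 16$ and $\omega(G) \le 5$ the enumeration is short: one branches on the colour of each vertex, pruning any branch in which the red side $V_1$ acquires a triangle or the blue side $V_2$ acquires a $K_5$; a full 2-colouring reached without pruning then furnishes a partition witnessing $G \overset{v}{\nrightarrow}(3,5)$, while exhaustion of the search tree without any completion certifies $G \overset{v}{\rightarrow}(3,5)$. Applied to the 147 candidates the expected outcome is that exactly four graphs survive; these are precisely the graphs displayed in Figure \ref{figure: H(3, 5; 6; 16)}, whose invariants then populate Table \ref{table: H(3, 5; 6; 16) properties}.

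The principal obstacle is one of verification rather than mathematics: I must be confident both that the enumeration of $\mathcal{H}(2,2,5;6;16)$ underlying Theorem \ref{theorem: abs(mathcal(H)(2, 2, 5; 6; 16)) = 147} is exhaustive, and that the arrowing test is correctly implemented. Both points can be addressed by independent checks --- for the four surviving graphs one directly certifies $G \overset{v}{\rightarrow}(3,5)$ by exhaustive inspection of every 2-colouring of $V(G)$, and for each of the other 143 graphs one exhibits an explicit witnessing partition $V_1 \cup V_2$ that refutes the arrowing.
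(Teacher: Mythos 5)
Your proposal is correct and follows essentially the same route as the paper: the authors likewise use the containment $\mathcal{H}(3,5;6;16) \subseteq \mathcal{H}(2,2,5;6;16)$ and then test each of the $147$ graphs from Theorem \ref{theorem: abs(mathcal(H)(2, 2, 5; 6; 16)) = 147} by computer for the property $G \overset{v}{\rightarrow} (3,5)$, finding exactly the four graphs of Figure \ref{figure: H(3, 5; 6; 16)}. Your additional remarks on how to implement the $2$-colouring search and how to certify the outcome are sensible elaborations of the same computation.
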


\begin{theorem}
\label{theorem: F_v(a_1, ..., a_s; m - 1) = m + 9, max set(a_1, ..., a_s) = 5}
Let $a_1, ..., a_s$ be positive integers, $m = \sum\limits_{i=1}^s (a_i - 1) + 1$, $\max\set{a_1, ..., a_s} = 5$ and $m \geq 7$. Then:
\begin{equation*}
F_v(a_1, ..., a_s; m - 1) = m + 9
\end{equation*}
\end{theorem}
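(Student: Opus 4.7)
The strategy is to prove the two inequalities separately, in both cases reducing to the sharp computational results established earlier in the paper: the equality $F_v(2, 2, 5; 6) = 16$ from Theorem \ref{theorem: F_v(2, 2, 5; 6) = 16} and the four-graph classification $\abs{\mathcal{H}(3, 5; 6; 16)} = 4$ from Theorem \ref{theorem: abs(mathcal(H)(3, 5; 6; 16)) = 4}. For the lower bound I first apply Theorem \ref{theorem: F_v(a_1, ..., a_s; m - 1) geq m + p + 3} with $p = 5$: its hypothesis $F_v(2, 2, 5; 6) \geq 2p + 5 = 15$ holds, yielding the weaker bound $F_v(a_1, \dots, a_s; m - 1) \geq m + 8$. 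To gain the remaining unit I revisit the argument of \cite{Nen02}, which in fact exhibits, inside any extremal $G \in \mathcal{H}(a_1, \dots, a_s; m - 1)$, an induced subgraph $H \subseteq G$ with $H \overset{v}{\rightarrow} (2, 2, p)$ and $\omega(H) \leq p$, together with $m - p - 2$ further vertices each dominating $H$. Substituting the sharp value $F_v(2, 2, 5; 6) = 16$ for the generic lower bound on $\abs{\V(H)}$ gives
\[
\abs{\V(G)} \geq (m - p - 2) + F_v(2, 2, p; p + 1) = (m - 7) + 16 = m + 9.
\]

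For the upper bound I fix a graph $G_0$ from Theorem \ref{theorem: abs(mathcal(H)(3, 5; 6; 16)) = 4} with the additional property $G_0 \overset{v}{\rightarrow} (4, 4)$, read off from the tabulated properties of the four candidate graphs, and set $G = K_{m - 7} + G_0$. Then $\abs{\V(G)} = (m - 7) + 16 = m + 9$ and $\omega(G) = (m - 7) + \omega(G_0) \leq m - 2 < m - 1$, so only $G \overset{v}{\rightarrow} (a_1, \dots, a_s)$ needs verification. Given any $s$-coloring of $\V(G)$, write $n_i$ for the number of color-$i$ vertices inside the $K_{m - 7}$ factor; I may assume $n_i \leq a_i - 1$ (else $K_{m - 7}$ already contains a monochromatic $a_i$-clique). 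Setting $b_i = a_i - n_i \geq 1$, every monochromatic $a_i$-clique of color $i$ in $G$ is the join of an $n_i$-clique from $K_{m - 7}$ with a color-$i$ $b_i$-clique in $G_0$, so the task reduces to showing $G_0 \overset{v}{\rightarrow} (b_1, \dots, b_s)$ for all admissible such tuples. Since $\sum b_i = \sum a_i - (m - 7) = s + 6$ and $b_i \leq a_i \leq 5$, after discarding trivial entries $b_i = 1$ the problem becomes a finite case analysis: verify $G_0 \overset{v}{\rightarrow} (b_1, \dots, b_t)$ for each tuple with $2 \leq b_i \leq 5$ and $\sum (b_i - 1) = 6$, i.e.\ the nine tuples
\[
(3, 5),\ (4, 4),\ (2, 2, 5),\ (2, 3, 4),\ (3, 3, 3),\ (2, 2, 2, 4),\ (2, 2, 3, 3),\ (2, 2, 2, 2, 3),\ (2, 2, 2, 2, 2, 2).
\]

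The relation $G_0 \overset{v}{\rightarrow} (3, 5)$ is built into the choice of $G_0$; from it, repeated application of the splitting lemma ``if $G \overset{v}{\rightarrow} (c' + c'' - 1, b_3, \dots)$ then $G \overset{v}{\rightarrow} (c', c'', b_3, \dots)$'' (proved by merging two colors back into one) yields every other tuple on the list \emph{except} $(4, 4)$, since splitting strictly increases the number of colors whereas $(4, 4)$ has only two. The single remaining point, and what I expect to be the main obstacle of the whole proof, is therefore to verify directly that at least one of the four graphs in $\mathcal{H}(3, 5; 6; 16)$ also satisfies $G_0 \overset{v}{\rightarrow} (4, 4)$ -- equivalently, that $F_v(4, 4; 6) \leq 16$ is witnessed within this family. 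This is the computational heart of the construction and rests on the explicit graph data generated for Theorem \ref{theorem: abs(mathcal(H)(3, 5; 6; 16)) = 4}; once it is secured, the remaining pieces assemble automatically to give $F_v(a_1, \dots, a_s; m - 1) \leq m + 9$, which combined with the lower bound completes the proof.
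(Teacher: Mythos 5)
Both halves of your argument break at a concrete point, and in both cases the obstruction is something the paper explicitly verifies.

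\textbf{Lower bound.} The structural claim you attribute to \cite{Nen02} --- that every extremal $G \in \mathcal{H}(a_1, \dots, a_s; m-1)$ contains an induced $H$ with $H \overset{v}{\rightarrow} (2,2,p)$, $\omega(H) \leq p$, plus $m-p-2$ dominating vertices, so that $\abs{\V(G)} \geq (m-p-2) + F_v(2,2,p;p+1)$ --- is not established anywhere and, as a general principle, is false: for $p=2$ it would give $F_v(2,2,2,2;4) \geq 1 + F_v(2,2,2;3) = 12$, contradicting $F_v(2,2,2,2;4)=11$; similarly it fails for $p=3$ (the paper records $r_0(2)=4$, $r_0(3)=3$ in Example \ref{example: r_0(2) = 4, r_0(3) = 3, r_0(4) = 2}). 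Your argument uses nothing specific to $p=5$, so it cannot be repaired by inspection. The inequality you want is exactly the statement $r_0(5)=2$ of Conjecture \ref{conjecture: F_v(2_r, p; r + p - 1) = F_v(2, 2, p; p + 1) + r - 2} for $p=5$; the dichotomy in Theorem \ref{theorem: min_(r geq 2) set(F_v(2_r, p; r + p - 1) - r) = F_v(2_(r_0), p; r_0 + p - 1) - r_0} only bounds $r_0(5) \leq 5$, and the paper must then rule out $r_0(5) \in \set{3,4,5}$ by three separate computer searches ($F_v(2,2,2,5;7)>16$, $F_v(2,2,2,2,5;8)>17$, $F_v(2,2,2,2,2,5;9)>18$). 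That computational content cannot be bypassed by the reduction you propose.

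\textbf{Upper bound.} Your reduction of $K_{m-7}+G_0 \overset{v}{\rightarrow} (a_1,\dots,a_s)$ to the nine tuples with $\sum(b_i-1)=6$, $b_i \leq 5$, is correct, and you rightly isolate $(4,4)$ as the one tuple not reachable from $(3,5)$ by color-splitting. But the graph $G_0$ you need does not exist: the paper checks by computer that \emph{none} of the four graphs in $\mathcal{H}(3,5;6;16)$ belongs to $\mathcal{H}(4,4;6;16)$, which is precisely why $\wFv{7}{5}{6} = 17 > 16$ (Theorem \ref{theorem: wFv(m)(5)(m - 1) = ...}, Case 1). So a $16$-vertex, $K_6$-free base arrowing both $(3,5)$ and $(4,4)$ is unavailable, and $K_{m-7}+G_0$ cannot yield $m+9$ vertices. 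The paper's workaround is to shift the base of the cone to $m_0=8$: it exhibits a $17$-vertex graph $\Gamma_2 \in \mathcal{H}(4,5;7;17)$ with $\omega(\Gamma_2)=6$, shows $\Gamma_2 \overset{v}{\rightarrow} \uni{8}{5}$ by the same splitting argument (here $(4,5)$ dominates every required tuple, including $(4,4)$ via Proposition \ref{proposition: G overset(v)(rightarrow) (a_1, ..., a_(i - 1), k, a_i - k + 1, a_(i + 1), ..., a_s)}), and takes $K_{m-8}+\Gamma_2$ for $m \geq 8$, giving $(m-8)+17 = m+9$. The case $m=7$ is then handled separately, where the only canonical tuples with $\max = 5$ are $(2,2,5)$ and $(3,5)$ and the $16$-vertex graphs suffice.
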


At the end of this paper as a consequence of these results and with the help of one graph (see Figure \ref{figure: H(3, 6; 7; 18) cap H(4, 5; 7; 18)}) from \cite{SXP09} we prove that

\begin{theorem}
\label{theorem: m + 9 leq F_v(a_1, ..., a_s; m - 1) leq m + 10, max set(a_1, ..., a_s) = 6}
Let $a_1, ..., a_s$ be positive integers, $m = \sum\limits_{i=1}^s (a_i - 1) + 1$, $\max\set{a_1, ..., a_s} = 6$ and $m \geq 8$. Then:\\

$m + 9 \leq F_v(a_1, ..., a_s; m - 1) \leq m + 10$
\end{theorem}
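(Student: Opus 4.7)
The plan is to establish the upper bound by a join construction and the lower bound by one application of the splitting rule combined with Theorem~\ref{theorem: F_v(a_1, ..., a_s; m - 1) = m + 9, max set(a_1, ..., a_s) = 5}.

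For the upper bound, let $G_0$ be the $18$-vertex graph from \cite{SXP09} shown in Figure~\ref{figure: H(3, 6; 7; 18) cap H(4, 5; 7; 18)}, which lies in $\mathcal{H}(3, 6; 7) \cap \mathcal{H}(4, 5; 7)$ and therefore satisfies $\omega(G_0) = 6$. Since $m \geq 8$, the graph $G := K_{m - 8} + G_0$ is well defined, has $m + 10$ vertices, and satisfies $\omega(G) = (m - 8) + 6 = m - 2 < m - 1$. To verify $G \overset{v}{\rightarrow} (a_1, \ldots, a_s)$, take any $s$-coloring of $\V(G)$ and let $n_i$ be the number of vertices of $K_{m - 8}$ colored $i$; we may assume $n_i \leq a_i - 1$ for each $i$, as otherwise $K_{m - 8}$ already contains a monochromatic $a_i$-clique. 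Because every vertex of $K_{m - 8}$ is adjacent to all of $G_0$, it suffices to show $G_0 \overset{v}{\rightarrow} (a_1', \ldots, a_s')$ for $a_i' = a_i - n_i$, where the reduced tuple has all parts in $\set{1, \ldots, 6}$ and $\sum_{i=1}^{s} (a_i' - 1) = (m - 1) - (m - 8) = 7$. Using the standard splitting implication $G \overset{v}{\rightarrow} (\ldots, b + c - 1, \ldots) \Rightarrow G \overset{v}{\rightarrow} (\ldots, b, c, \ldots)$, every such reduced tuple can be derived from one of $G_0 \overset{v}{\rightarrow} (3, 6)$ or $G_0 \overset{v}{\rightarrow} (4, 5)$; the relevant partitions of $7$ into parts of size at most $5$ are finitely many and each is checked individually. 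This yields $F_v(a_1, \ldots, a_s; m - 1) \leq m + 10$.

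For the lower bound, the same splitting implication shows that $F_v$ is non-increasing under splitting of any coordinate. Since $\max\set{a_1, \ldots, a_s} = 6$, replace each $a_i$ equal to $6$ by the pair $(2, 5)$, using $6 = 2 + 5 - 1$. The resulting tuple has maximum $5$, the same associated $m$-value (since $-(6-1) + (2-1) + (5-1) = 0$), and satisfies $m \geq 8 \geq 7$, so Theorem~\ref{theorem: F_v(a_1, ..., a_s; m - 1) = m + 9, max set(a_1, ..., a_s) = 5} gives the value $m + 9$ for the split tuple at $q = m - 1$. Therefore $F_v(a_1, \ldots, a_s; m - 1) \geq m + 9$.

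The main obstacle is the bookkeeping in the upper bound argument: certifying that every partition of $7$ into parts of size at most $5$ corresponds to a tuple derivable from $(3, 6)$ or $(4, 5)$ by iterated splitting. The count is small (roughly a dozen partitions, grouped by $\max a_i' \in \set{2, 3, 4, 5, 6}$), but each case must be traced explicitly back to one of the two base arrowings of $G_0$ to guarantee completeness of the case analysis.
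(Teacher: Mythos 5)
Your proposal is correct. The upper bound is essentially the paper's argument: the paper also takes the $18$-vertex graph $\Gamma_3$ of \cite{SXP09}, notes that $\Gamma_3 \overset{v}{\rightarrow} (3,6)$ and $\Gamma_3 \overset{v}{\rightarrow} (4,5)$ together imply $\Gamma_3 \overset{v}{\rightarrow} \uni{8}{6}$ via the splitting implication (Proposition \ref{proposition: G overset(v)(rightarrow) (a_1, ..., a_(i - 1), k, a_i - k + 1, a_(i + 1), ..., a_s)}), and then passes to general $m$ by joining with $K_{m-8}$; the only difference is that the paper packages the join step through Lemma \ref{lemma: K_(m - m_0) + G overset(v)(rightarrow) uni(m)(p)}, Theorem \ref{theorem: wFv(m)(p)(m - m_0 + q) leq wFv(m_0)(p)(q) + m - m_0} and Theorem \ref{theorem: F_v(2_(m - p), p; q) leq F_v(a_1, ..., a_s; q) leq wFv(m)(p)(q)}, whereas you unroll the same colour-counting argument inline (your handling of reduced parts equal to $1$ is the standard convention and is harmless). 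Your lower bound, however, takes a genuinely different route: the paper first derives $F_v(2,2,6;7) \geq F_v(2,2,2,5;7) = 17 = 2\cdot 6 + 5$ from Corollary \ref{corollary: F_v(2_r, 5; r + 4) = r + 14} and then invokes the general reduction Theorem \ref{theorem: F_v(a_1, ..., a_s; m - 1) geq m + p + 3} with $p = 6$, while you simply split every entry equal to $6$ into $(2,5)$ (preserving $m$, hence preserving $q = m-1$), observe that $\mathcal{H}(a_1,\ldots,a_s;m-1)$ is contained in the class for the split tuple, and quote Theorem \ref{theorem: F_v(a_1, ..., a_s; m - 1) = m + 9, max set(a_1, ..., a_s) = 5} directly. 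Your version is shorter and avoids the external Theorem \ref{theorem: F_v(a_1, ..., a_s; m - 1) geq m + p + 3} entirely; the paper's version has the mild advantage of isolating the reusable fact $F_v(2,2,6;7) \geq 17$. Both arguments are sound.
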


\begin{remark}
According to (\ref{equation: F_v(a_1, ..., a_s; m - 1) exists}) the conditions $m \geq 7$ in Theorem \ref{theorem: F_v(a_1, ..., a_s; m - 1) = m + 9, max set(a_1, ..., a_s) = 5} and $m \geq 8$ in Theorem \ref{theorem: m + 9 leq F_v(a_1, ..., a_s; m - 1) leq m + 10, max set(a_1, ..., a_s) = 6} are necessary.
\end{remark}

\section{Proof of Theorem \ref{theorem: abs(mathcal(H)(2, 2, 5; 6; 16)) = 147}}

We adapt Algorithm A1 from \cite{PRU99} to obtain all graphs in $\mathcal{H}(2, 2, 5; 6; 16)$ with the help of computer. Similar algorithms are used in \cite{CR06}, \cite{XLS10}, \cite{LR11} and \cite{SLPX12}. Also, with the help of computer, results for Folkman numbers are obtained in \cite{JR95}, \cite{SXP09}, \cite{SXL09} and \cite{DLSX13}.\\
The naive approach for finding all graphs in $\mathcal{H}(2, 2, 5; 6; 16)$ suggests to check all graphs of order 16 for inclusion in $\mathcal{H}(2, 2, 5; 6)$. However, this is practically impossible because the number of graphs to check is too large. The method that is described uses an algorithm for effective generation of all maximal graphs in $\mathcal{H}(2, 2, 5; 6; 16)$. The other graphs in $\mathcal{H}(2, 2, 5; 6; 16)$ are their subgraphs. The algorithm is based on the following proposition:

\begin{proposition}
\label{proposition: forming all maximal graphs in mathcal(H)(2_r, p; q; n)}
Let $G$ be a maximal graph in $\mathcal{H}(2_r, p; q; n)$ and $v_1, v_2, ..., v_k$ are independent vertices. Let $H = G - \set{v_1, v_2, ..., v_k}$. Then:
\begin{flalign*}
H \in \mathcal{H}(2_{r-1}, p; q; n - k) && \tag{a}
\end{flalign*}
\begin{flalign*}
\mbox{the addition of a new edge to $H$ forms a new $(q - 1)$-clique} && \tag{b}
\end{flalign*}
\begin{flalign*}
\mbox{$\N_G(v_i)$ is a maximal $K_{q - 1}$-free subset} && \tag{c}
\end{flalign*}
of $\V(H)$, $i = 1, ..., k$
\end{proposition}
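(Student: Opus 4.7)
The plan is to verify (a), (b), (c) separately by short direct arguments that exploit the three hypotheses on $G$ in turn---the arrowing $G \overset{v}{\rightarrow} (2_r, p)$, the maximality of $G$, and the restriction $\omega(G) < q$---together with the fact that $\set{v_1, ..., v_k}$ is an independent set in $G$.

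For (a), the conditions $\omega(H) < q$ and $\abs{\V(H)} = n - k$ are immediate from $H$ being an induced subgraph of $G$. For the arrowing $H \overset{v}{\rightarrow} (2_{r-1}, p)$ I would argue by contradiction: given a ``bad'' $r$-partition $V_1, ..., V_{r-1}, V_r$ of $\V(H)$ in which $V_1, ..., V_{r-1}$ are independent and $V_r$ contains no $p$-clique, I form the $(r+1)$-partition $V_1, ..., V_{r-1}, \set{v_1, ..., v_k}, V_r$ of $\V(G)$. Its first $r$ classes are independent (here the independence of $\set{v_1, ..., v_k}$ is used in a crucial way), and the last class has no $p$-clique, so this colouring contradicts $G \overset{v}{\rightarrow} (2_r, p)$.

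For (b), let $e = [u, w]$ be any non-edge of $H$. Since $H$ is the induced subgraph of $G$ on $\V(G) \setminus \set{v_1, ..., v_k}$, the pair $e$ is also a non-edge of $G$, and by maximality of $G$ there is a $q$-clique $Q$ in $G + e$, necessarily containing both $u$ and $w$. The independence of $\set{v_1, ..., v_k}$ forces $\abs{Q \cap \set{v_1, ..., v_k}} \leq 1$, so $Q \setminus \set{v_1, ..., v_k}$ is a clique of size at least $q - 1$ in $H + e$ that contains $e$, and is therefore new (no clique of $H$ uses $e$).

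For (c), fix $i$. Any $(q - 1)$-clique inside $\N_G(v_i)$ would, together with $v_i$, form a $q$-clique in $G$, contradicting $\omega(G) < q$; hence $\N_G(v_i)$ is $K_{q - 1}$-free, and $\N_G(v_i) \subseteq \V(H)$ because the $v_j$ are pairwise non-adjacent. For maximality of $\N_G(v_i)$ as a $K_{q - 1}$-free subset of $\V(H)$, pick $w \in \V(H) \setminus \N_G(v_i)$; then $[v_i, w]$ is a non-edge of $G$, and maximality of $G$ produces a $q$-clique in $G + [v_i, w]$ containing both endpoints. Removing $v_i$ yields a $(q - 1)$-clique inside $\N_G(v_i) \cup \set{w}$, so enlarging $\N_G(v_i)$ by $w$ destroys the $K_{q - 1}$-freeness.

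None of these steps is a real obstacle; the only place that requires a moment of care is (a), where the new independent colour class $\set{v_1, ..., v_k}$ must be inserted as the $r$-th (``$2$''-)class rather than as the $(r + 1)$-st (``$p$''-)class, so that the $p$-clique-free part of the bad colouring of $H$ lands in the correct final slot of the extended colouring of $G$.
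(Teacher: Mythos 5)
Your proposal is correct, and it is essentially the paper's argument: the authors dispose of the proposition in one line ((a) from $G \in \mathcal{H}(2_r, p; q; n)$, (b) and (c) from the maximality of $G$), and your write-up simply supplies the routine details --- the extra independent colour class for (a), and the $q$-clique created by each added edge for (b) and (c). No gaps.
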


\begin{proof}
The proposition (a) follows from the assumption that $G \in \mathcal{H}(2_r, p; q; n)$, (b) and (c) follow from the maximality of $G$.
\end{proof}

The following algorithm, which is a modification of Algorithm А1 from \cite{PRU99}, generates all maximal graphs in $\mathcal{H}(2_r, p; q; n)$ with independence number at least $k$:

\begin{algorithm}
\label{algorithm: add_is_kn_free}
Generation of all maximal graphs in $\mathcal{H}(2_r, p; q; n)$ with independence number at least $k$ by adding $k$ independent vertices to the graphs from $\mathcal{H}(2_{r-1}, p; q; n - k)$ in which the addition of a new edge forms a new $(q - 1)$-clique.

1. Let $\mathcal{A} \subseteq \mathcal{H}(2_{r-1}, p; q; n - k)$ is the set of these graphs in which the addition of a new edge forms a new $(q - 1)$-clique (see Proposition \ref{proposition: forming all maximal graphs in mathcal(H)(2_r, p; q; n)} (a) and (b)). The maximal graphs in $\mathcal{H}(2_r, p; q; n)$ are output in $\mathcal{B}$.

2. For each graph $H \in \mathcal{A}$:

2.1. Find the family $\mathcal{M}(H) = \set{M_1, ..., M_t}$ of all maximal $K_{q - 1}$-free subsets of $\V(H)$.

2.2. Consider all the $k$-tuples $(M_{i_1}, M_{i_2}, ..., M_{i_k})$ of elements of $\mathcal{M}(H)$ for which $1 \leq i_1 \leq ... \leq i_k \leq t$ (in these $k$-tuples some subsets $M_i$ can coincide). For every such $k$-tuple construct the graph $G = G(M_{i_1}, M_{i_2}, ..., M_{i_k})$ by adding to $\V(H)$ new independent vertices $v_1, v_2, ..., v_k$, so that $N_G(v_j) = M_{i_j}, j = 1, ..., k$ (see Proposition \ref{proposition: forming all maximal graphs in mathcal(H)(2_r, p; q; n)} (c)). If $\omega(G + e) = q, \forall e \in \E(\overline{G})$, then add $G$ to $\mathcal{B}$.

3. Exclude the isomorph copies of graphs from $\mathcal{B}$.

4. Exclude from $\mathcal{B}$ all graph which are not in $\mathcal{H}(2_r, p; q; n)$.

\end{algorithm}

According to Proposition \ref{proposition: forming all maximal graphs in mathcal(H)(2_r, p; q; n)}, at the end of step 4 $\mathcal{B}$ is the set of all maximal graphs in $\mathcal{H}(2_r, p; q; n)$.

Intermediate problems, that are solved, are finding all graphs in $\mathcal{H}(2, 5; 6; 13)$ and in $\mathcal{H}(5; 6; 10)$. For each of the sets $\mathcal{H}(2, 2, 5; 6; 16)$ and $\mathcal{H}(2, 5; 6; 13)$ we start by finding the maximal graphs in them. The remaining graphs are obtained by removing edges from the maximal graphs. Using Algorithm \ref{algorithm: add_is_kn_free} we can obtain the maximal graphs in $\mathcal{H}(2, 2, 5; 6; 16)$ with independence number at least 3 by adding 3 independent vertices to graphs in $\mathcal{H}(2, 5; 6; 13)$. Similarly, we can obtain the maximal graphs in $\mathcal{H}(2, 5; 6; 13)$ with independence number at least 3 by adding 3 independent vertices to graphs in $\mathcal{H}(5; 6; 10)$. What remains is to find the maximal graphs in these sets with independence number 2. Let

$\mathcal{R}(p, q) = \set{G : \alpha(G) < p \mbox{ and } \omega(G) < q}$

$\mathcal{R}(p, q; n) = \set{G : G \in \mathcal{R}(p, q) \mbox{ and } \abs{\V(G)} = n}$

The graphs $\mathcal{R}(3, 6)$ are known (see \cite{McK_r} and \cite{Rad06}). The maximal graphs in $\mathcal{H}(2, 2, 5; 6; 16)$ with independence number 2 are a subset of $\mathcal{R}(3, 6; 16)$ and the maximal graphs in $\mathcal{H}(2, 5; 6; 13)$ with independence number 2 are a subset of $\mathcal{R}(3, 6; 13)$

The \emph{nauty} programs \cite{McK90} have an important role in this work. We use them for fast generation of non-isomorphic graphs, isomorph rejection and to determine the automorphism groups of graphs.

\subsection{Finding all graphs in $\mathcal{H}(5; 6; 10)$}

It is clear that $\mathcal{H}(5; 6; 10)$ is the set of 10 vertex graphs with clique number 5. The number of non-isomorphic graphs of order 10 is 12 005 168. Out of those we can easily find the graphs with clique number 5. Thus, we obtain all 1 724 440 graphs in $\mathcal{H}(5; 6; 10)$. 

\subsection{Finding all graphs in $\mathcal{H}(2, 5; 6; 13)$}

\begin{algorithm}
\label{algorithm: H(2, 5; 6; 13)}
Finding all graphs in $\mathcal{H}(2, 5; 6; 13)$.

1. Find all maximal graphs $G \in \mathcal{H}(2, 5; 6; 13)$ for which $\alpha(G)\geq 3$:

1.1. Determine which of the graphs in $\mathcal{H}(5; 6; 10)$ have the property that the addition of a new edge forms a new 5-clique.

1.2. Using Algorithm \ref{algorithm: add_is_kn_free} add three independent vertices to the graphs from step 1.1. to obtain the graphs wanted in step 1.

2. Find all maximal graphs $G \in \mathcal{H}(2, 5; 6; 13)$ for which $\alpha(G) = 2$:

2.1. In order to do so, check which of the graphs in $\mathcal{R}(3, 6; 13)$ are maximal graphs in $\mathcal{H}(2, 5; 6; 13)$.

3. The union of the graphs from steps 1. and 2. gives all maximal graphs in $\mathcal{H}(2, 5; 6; 13)$. By removing edges from them the remaining graphs in $\mathcal{H}(2, 5; 6; 13)$ are obtained.

\end{algorithm}

Results of computations:

Step 1: Among all the graphs in $\mathcal{H}(5; 6; 10)$ exactly 3633 have the property that the addition of a new edge forms a new 5-clique. By adding three independent vertices to them we obtain 326 maximal graphs in $\mathcal{H}(2, 5; 6; 13)$.

Step 2: The number of graphs in $\mathcal{R}(3, 6; 13)$ is 275 086 \cite{McK_r36_13}. Among them 61 are maximal graphs in $\mathcal{H}(2, 5; 6; 13)$.

Step 3: The union of the graphs from steps 1. and 2. gives all 387 maximal graphs in $\mathcal{H}(2, 5; 6; 13)$. By removing edges from them we obtain all 20 013 726 graphs in $\mathcal{H}(2, 5; 6; 13)$ . 

\subsection{Finding all graphs in $\mathcal{H}(2, 2, 5; 6; 16)$}

\begin{algorithm}
\label{algorithm: H(2, 2, 5; 6; 16)}
Finding all graphs in $\mathcal{H}(2, 2, 5; 6; 16)$.

1. Find all maximal graphs $G \in \mathcal{H}(2, 2, 5; 6; 16)$ for which $\alpha(G)\geq 3$:

1.1. Determine which of the graphs in $\mathcal{H}(2, 5; 6; 13)$ have the property that the addition of a new edge forms a new 5-clique.

1.2. Using Algorithm \ref{algorithm: add_is_kn_free} add three independent vertices to the graphs from step 1.1. to obtain the graphs wanted in step 1.

2. Find all maximal graphs $G \in \mathcal{H}(2, 2, 5; 6; 16)$ for which $\alpha(G) = 2$:

2.1. In order to do so, check which of the graphs in $\mathcal{R}(3, 6; 16)$ are maximal graphs in $\mathcal{H}(2, 2, 5; 6; 16)$.

3. The union of the graphs from steps 1. and 2. gives all maximal graphs in $\mathcal{H}(2, 2, 5; 6; 16)$. By removing edges from them the remaining graphs in $\mathcal{H}(2, 2, 5; 6; 16)$ are obtained.

\end{algorithm}

\begin{figure}
	\centering
	\begin{subfigure}[b]{0.5\textwidth}
		\centering
		\includegraphics[height=240px,width=120px]{./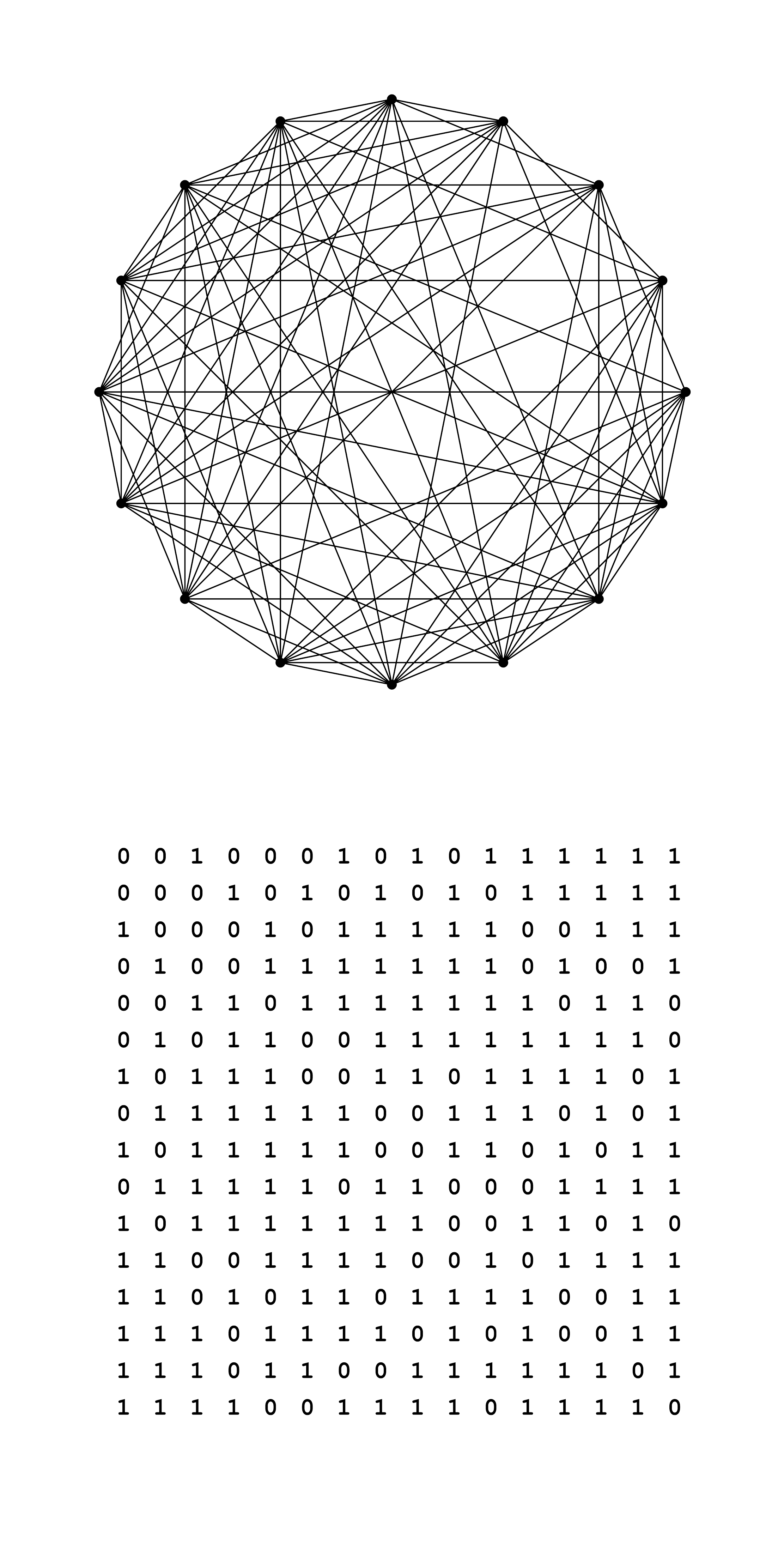}
		\caption*{\emph{$G_{74}$}}
		\label{figure: G_74}
	\end{subfigure}%
	\begin{subfigure}[b]{0.5\textwidth}
		\centering
		\includegraphics[height=240px,width=120px]{./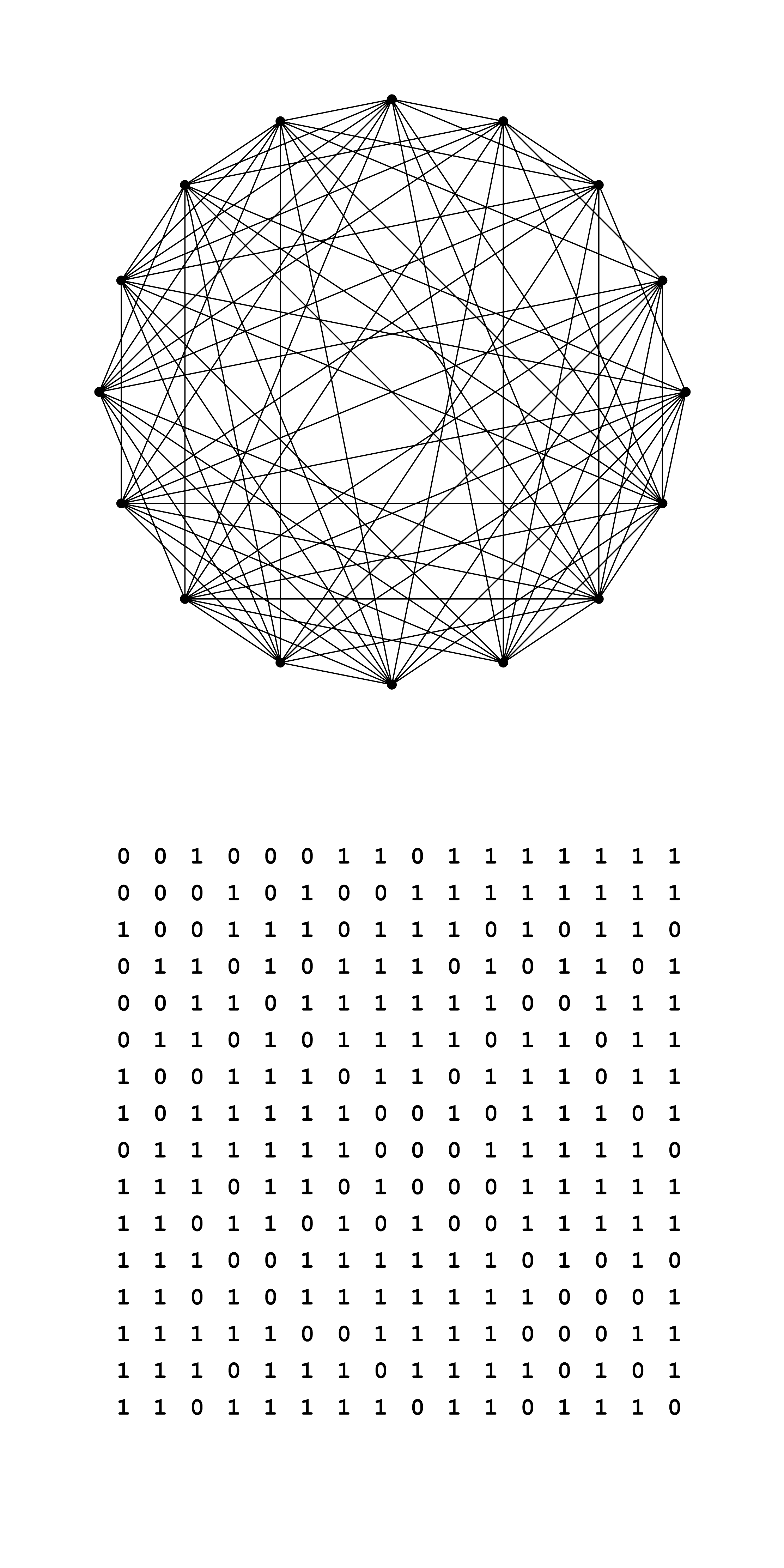}
		\caption*{\emph{$G_{78}$}}
		\label{figure: G_78}
	\end{subfigure}
	
	\begin{subfigure}[b]{0.5\textwidth}
		\centering
		\includegraphics[height=240px,width=120px]{./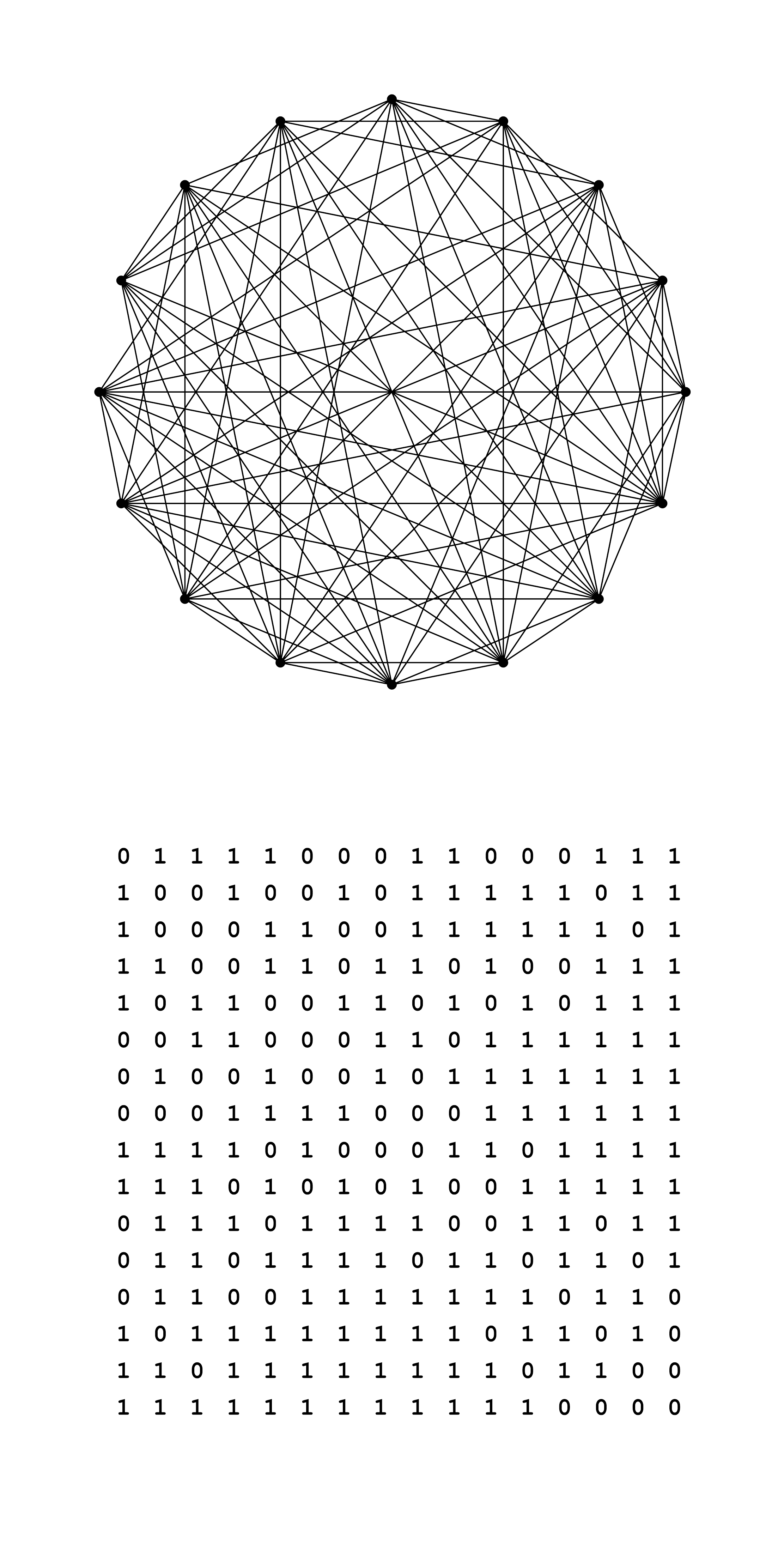}
		\caption*{\emph{$G_{134}$}}
		\label{figure: G_134}
	\end{subfigure}%
	\begin{subfigure}[b]{0.5\textwidth}
		\centering
		\includegraphics[height=240px,width=120px]{./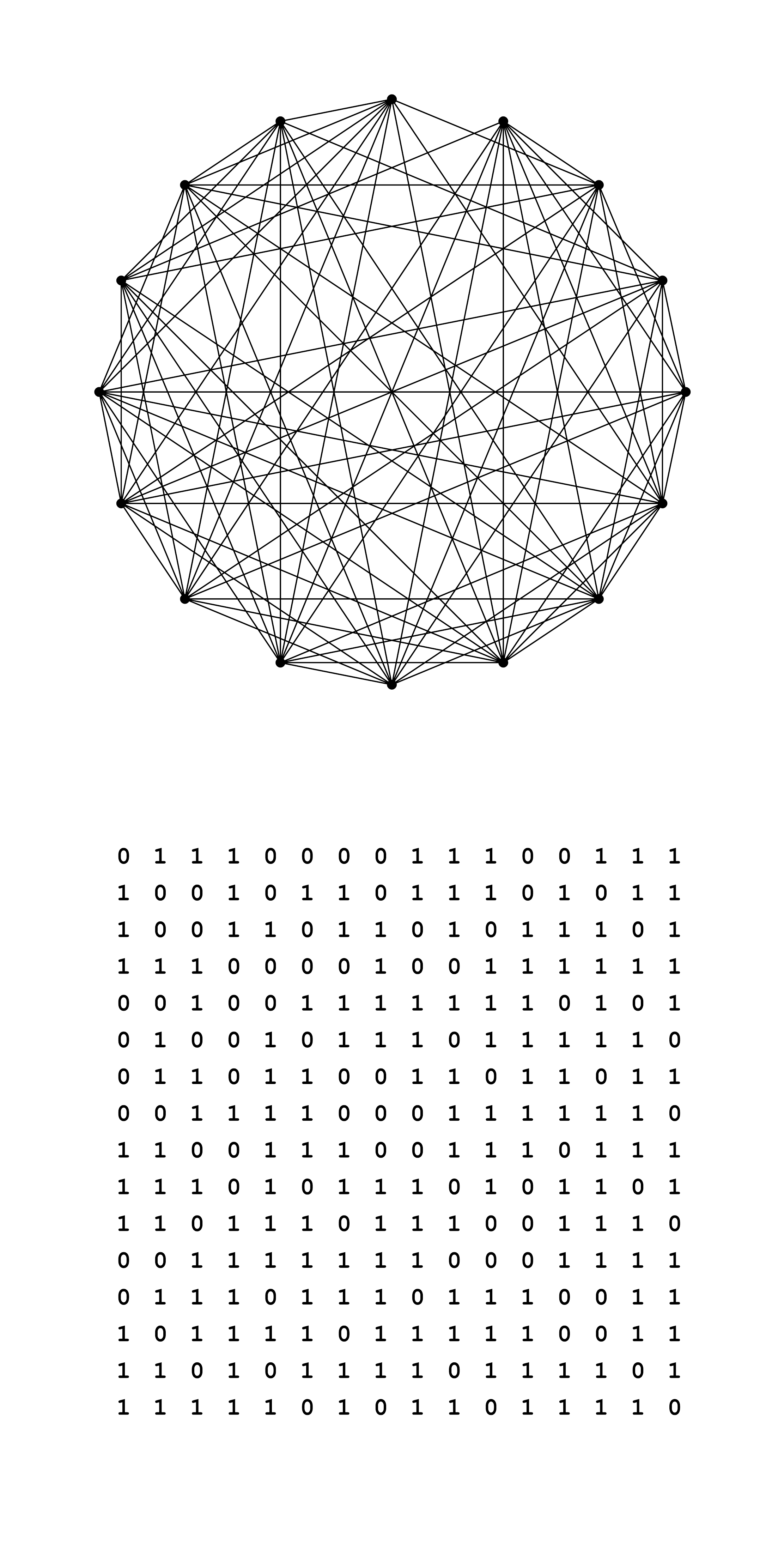}
		\caption*{\emph{$G_{135}$}}
		\label{figure: G_135}
	\end{subfigure}
	\caption{All 4 bicritical graphs in $\mathcal{H}(2, 2, 5; 6; 16)$}
	\label{figure: H(2, 2, 5; 6; 16) bicritical}
\end{figure}

Results of computations:

Step 1: Among all the graphs in $\mathcal{H}(2, 5; 6; 13)$ exactly 2 265 005 have the property that the addition of a new edge forms a new 5-clique. By adding three independent vertices to them we obtain 32 maximal graphs $\mathcal{H}(2, 2, 5; 6; 16)$.

Step 2: The number of graphs in $\mathcal{R}(3, 6; 16)$ is 2576 \cite{McK_r36_16}. Among them 5 are maximal graphs in $\mathcal{H}(2, 2, 5; 6; 16)$.

Step 3: The union of the graphs from steps 1. and 2. gives all 37 maximal graphs in $\mathcal{H}(2, 2, 5; 6; 16)$. By removing edges from them we obtain all 147 graphs in $\mathcal{H}(2, 2, 5; 6; 16)$.

We denote by $G_1, ..., G_{147}$ the graphs in $\mathcal{H}(2, 2, 5; 6; 16)$. The indexes correspond to the defined order in the \emph{nauty} programs. In Table \ref{table: H(2, 2, 5; 6; 16) properties} are listed some properties of the graphs in $\mathcal{H}(2, 2, 5; 6; 16)$. Among them there are 37 maximal, 41 minimal and 4 bicritical graphs (see Figure \ref{figure: H(2, 2, 5; 6; 16) bicritical}). The properties of the bicritical graphs are listed in Table \ref{table: H(2, 2, 5; 6; 16) bicritical properties}.

Thus, we finished the proof of Theorem \ref{theorem: abs(mathcal(H)(2, 2, 5; 6; 16)) = 147}.

All computations were done on a personal computer. The slowest part was step 1.2 of Algorithm \ref{algorithm: H(2, 2, 5; 6; 16)} which took several days to complete.

Note that to find all graphs in $\mathcal{H}(2, 2, 5; 6; 16)$ it is enough to find only these graphs from the sets $\mathcal{H}(2, 5; 6; 13)$ and $\mathcal{H}(5; 6; 10)$ for which the addition of a new edge forms a new 5-clique. In this case that does not save us much of the time needed for computer work, but later, in the proof of Corollary \ref{corollary: F_v(2_r, 5; r + 4) = r + 14}, we use that possibility.

\begin{table}[h]
	\centering
	\begin{tabular}{ | l r | l r | l r | l r | l r | l r | l r | }
		\hline
		$|\E(G)|$		& $\#$	& $\delta(G)$	& $\#$	& $\Delta(G)$	& $\#$	& $\alpha(G)$	& $\#$	& $\chi(G)$		& $\#$	& $|Aut(G)|$		& $\#$	\\
		\hline
		83				&  7		& 7			& 2			& 11		& 24		& 2			& 21		& 7			& 65		& 1			& 84		\\
		84				&  25		& 8			& 36		& 12		& 123		& 3			& 126		& 8			& 82		& 2			& 44		\\
		85				&  42		& 9			& 61		& 			& 			& 			& 			& 			& 			& 3			& 1			\\
		86				&  37		& 10		& 47		& 			& 			& 			& 			& 			& 			& 4			& 8			\\
		87				&  29		& 11		& 1			& 			& 			& 			& 			& 			& 			& 6			& 8			\\
		88				&  6		& 			& 			& 			& 			& 			& 			& 			& 			& 8			& 1			\\
		89				&  1		& 			& 			& 			& 			& 			& 			& 			& 			& 96		& 1			\\
		\hline
	\end{tabular}
	\caption{Properties of the graphs in $\mathcal{H}(2, 2, 5; 6; 16)$}
	\label{table: H(2, 2, 5; 6; 16) properties}
\end{table}

\begin{table}[h]
	\centering
	\begin{tabular}{ | l | r | r | r | r | r | r | }
		\hline
		Graph		& $|\E(G)|$		& $\delta(G)$	& $\Delta(G)$	& $\alpha(G)$	& $\chi(G)$		& $|Aut(G)|$	\\
		\hline
		$G_{74}$		&  86		& 9			& 12		& 3			& 7			& 1			\\
		$G_{78}$		&  87		& 10		& 12		& 3			& 7			& 2			\\
		$G_{134}$		&  85		& 9			& 12		& 3			& 7			& 2			\\
		$G_{135}$		&  85		& 9			& 12		& 3			& 7			& 1			\\
		\hline
	\end{tabular}
	\caption{Properties of the bicritical graphs in $\mathcal{H}(2, 2, 5; 6; 16)$}
	\label{table: H(2, 2, 5; 6; 16) bicritical properties}
\end{table}

\section{Proof of Theorem \ref{theorem: F_v(2, 2, 5; 6) = 16} and Theorem \ref{theorem: abs(mathcal(H)(3, 5; 6; 16)) = 4}}

\begin{figure}
	\centering
	\begin{subfigure}[b]{0.5\textwidth}
		\centering
		\includegraphics[height=240px,width=120px]{./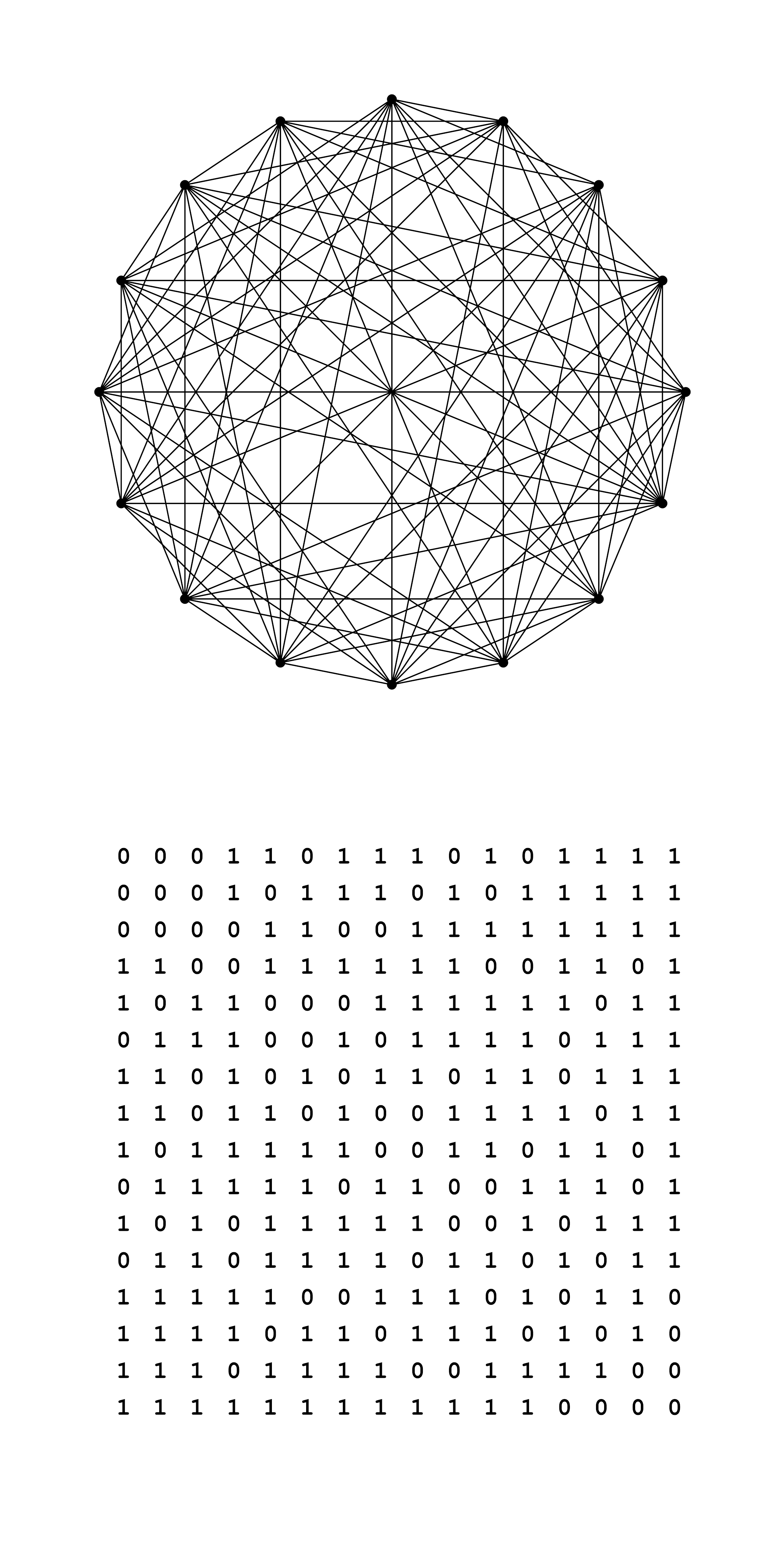}
		\caption*{\emph{$G_{50}$}}
		\label{figure: G_50}
	\end{subfigure}%
	\begin{subfigure}[b]{0.5\textwidth}
		\centering
		\includegraphics[height=240px,width=120px]{./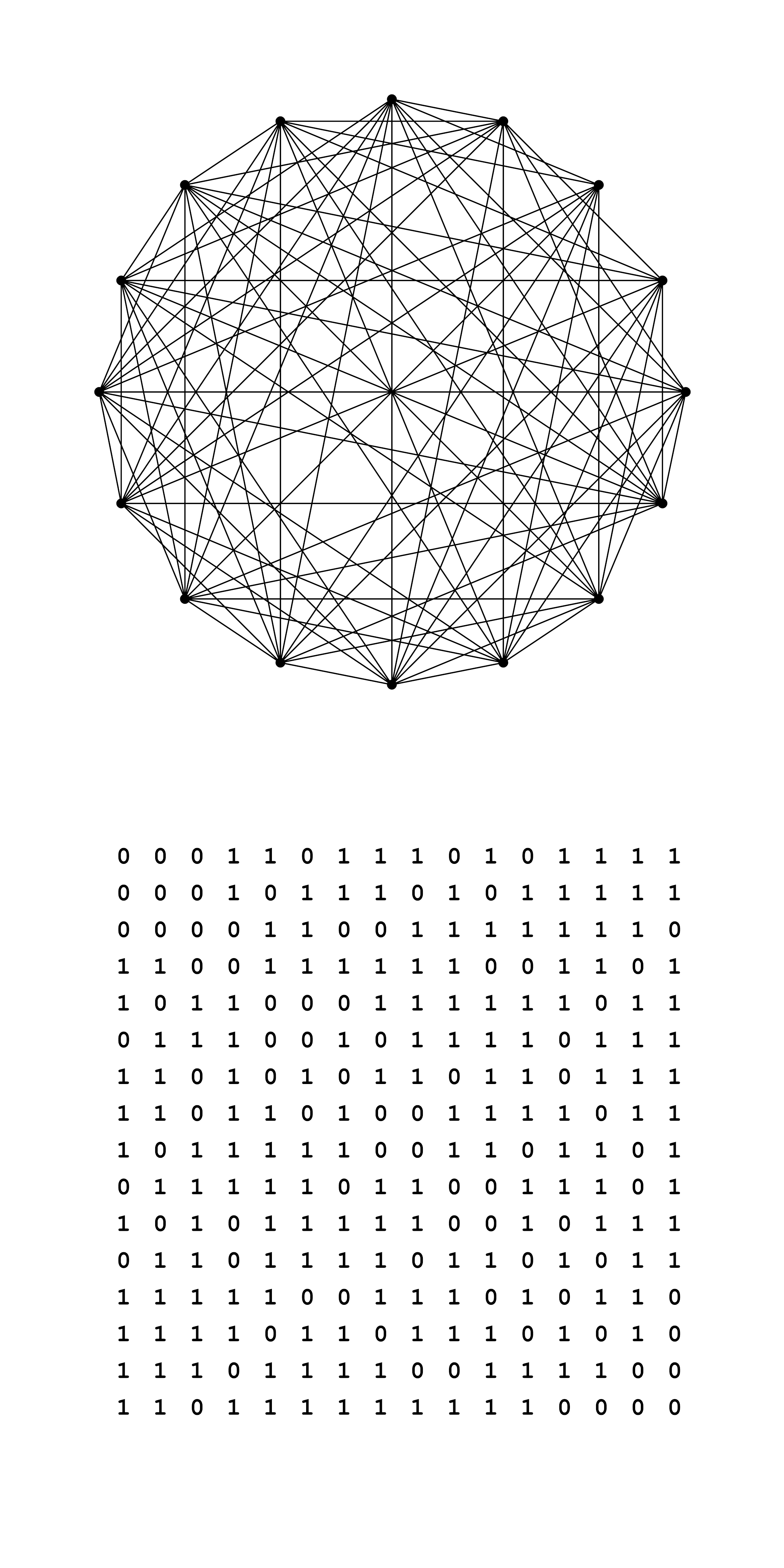}
		\caption*{\emph{$G_{51}$}}
		\label{figure: G_51}
	\end{subfigure}
	
	\begin{subfigure}[b]{0.5\textwidth}
		\centering
		\includegraphics[height=240px,width=120px]{./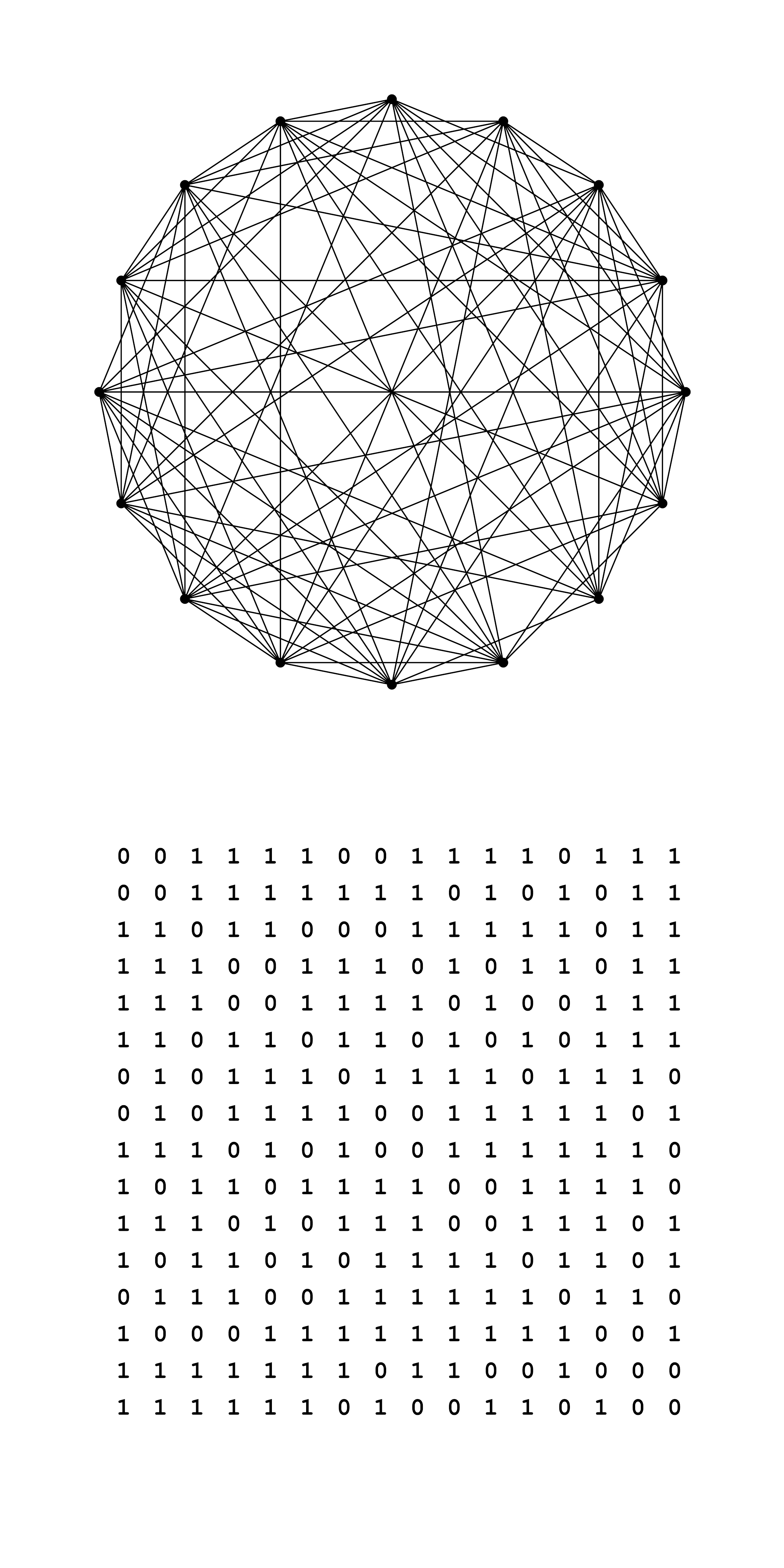}
		\caption*{\emph{$G_{81}$}}
		\label{figure: G_81}
	\end{subfigure}%
	\begin{subfigure}[b]{0.5\textwidth}
		\centering
		\includegraphics[height=240px,width=120px]{./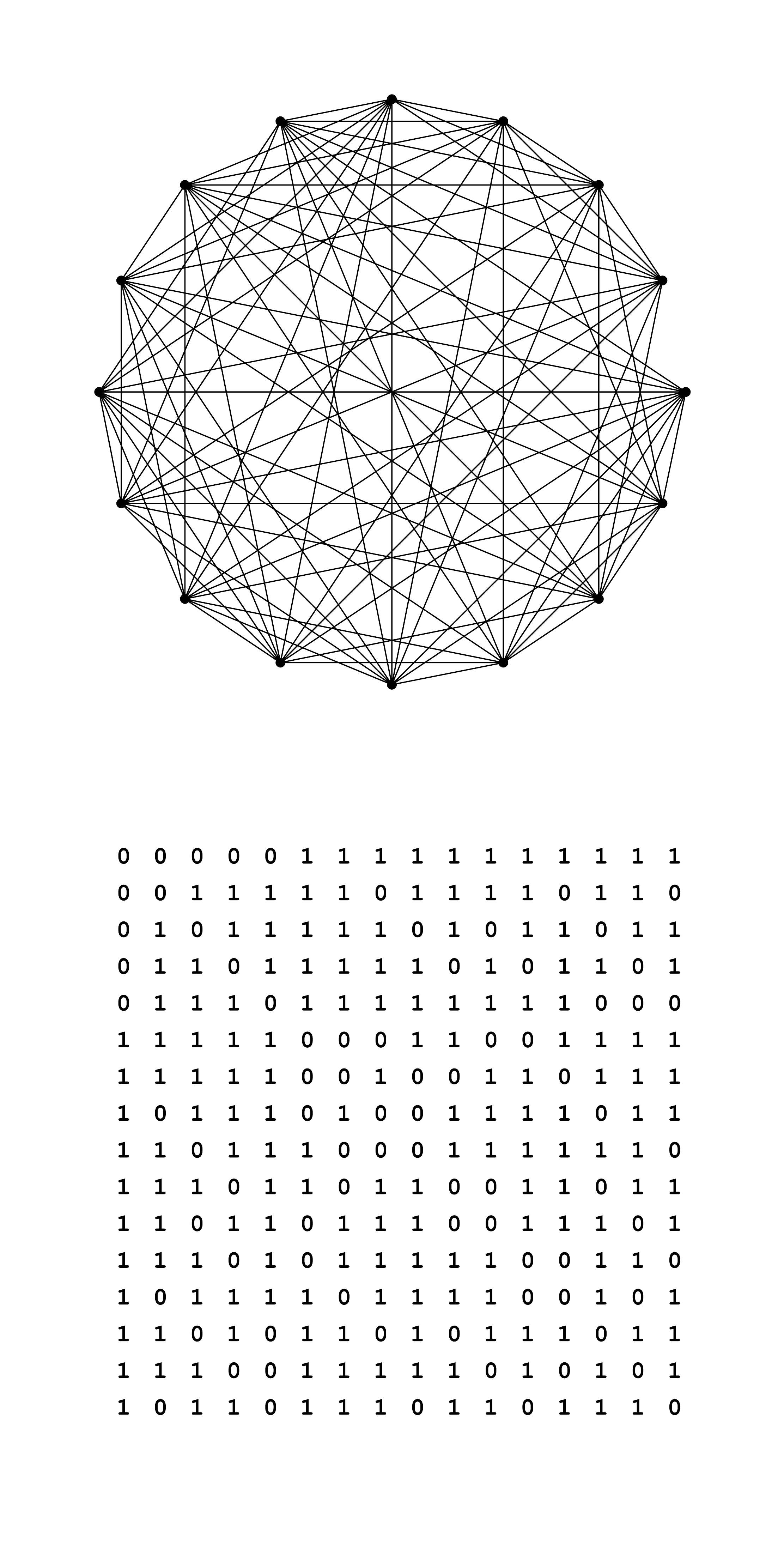}
		\caption*{\emph{$G_{146}$}}
		\label{figure: G_146}
	\end{subfigure}
	\caption{All 4 graphs in $\mathcal{H}(3, 5; 6; 16)$}
	\label{figure: H(3, 5; 6; 16)}
\end{figure}

\subsection*{Proof of Theorem \ref{theorem: F_v(2, 2, 5; 6) = 16}}

Since $\mathcal{H}(2, 2, 5; 6; 16) \neq \emptyset$, it follows that $F_v(2, 2, 5; 6) \leq 16$. With a simple algorithm, which removes a vertex and checks for inclusion in $\mathcal{H}(2, 2, 5; 6)$, we obtain $\mathcal{H}(2, 2, 5; 6; 15) = \emptyset$ which proves $F_v(2, 2, 5; 6) \geq 16$. Thus, the theorem is proved.

\begin{remark}
The lower bound $F_v(2, 2, 5; 6) \geq 16$ can be proved simpler in terms of time needed for computer work. The result $\mathcal{H}(2, 2, 5; 6; 15) = \emptyset$ can be obtained with a method similar to the one used to find all graphs in $\mathcal{H}(2, 2, 5; 6; 16)$, but in the slowest step we add 3 vertices to appropriately chosen 12-vertex graphs instead of 13-vertex graphs. A similar approach is used in the proof of the bound $F_v(3, 5; 6) \geq 16$ in \cite{SLPX12}.
\end{remark}

\subsection*{Proof of Theorem \ref{theorem: abs(mathcal(H)(3, 5; 6; 16)) = 4}}

Using that $\mathcal{H}(3, 5; 6; 16) \subseteq \mathcal{H}(2, 2, 5; 6; 16)$ by checking the graphs from Theorem \ref{theorem: abs(mathcal(H)(2, 2, 5; 6; 16)) = 147} with computer we obtain $\abs{\mathcal{H}(3, 5; 6; 16)} = 4$. The graphs from $\mathcal{H}(3, 5; 6; 16)$ are shown in Figure \ref{figure: H(3, 5; 6; 16)}. Some properties of these graphs are listed in Table \ref{table: H(3, 5; 6; 16) properties}. Thus, the theorem is proved.\\

It is interesting to note that for all these graphs the inequality (\ref{equation: G overset(v)(rightarrow) (a_1, ..., a_s) Rightarrow chi(G) geq m}) is strict. The graphs $G_{50}$ and $G_{146}$ are maximal and the other two graphs $G_{51}$ and $G_{81}$ are their subgraphs and are obtained by removing one edge. In \cite{SXP09} the inequality $F_v(3, 5; 6) \leq 16$ is proved with the help of the graph $G_{146}$. We shall note that $\abs{Aut(G_{146})} = 96$ and among all graphs in $\mathcal{H}(2, 2, 5; 6; 16)$ it has the most automorphisms.\\

\begin{table}[h]
	\centering
	\begin{tabular}{ | l | r | r | r | r | r | r | }
		\hline
		Graph		& $|\E(G)|$		& $\delta(G)$	& $\Delta(G)$	& $\alpha(G)$	& $\chi(G)$		& $|Aut(G)|$	\\
		\hline
		$G_{50}$		&  87		& 10		& 12		& 3			& 8			& 6			\\
		$G_{51}$		&  86		& 9			& 11		& 3			& 8			& 6			\\
		$G_{81}$		&  87		& 10		& 11		& 2			& 8			& 6			\\
		$G_{146}$		&  88		& 11		& 11		& 2			& 8			& 96		\\
		\hline
	\end{tabular}
	\caption{Properties of the graphs in $\mathcal{H}(3, 5; 6; 16)$}
	\label{table: H(3, 5; 6; 16) properties}
\end{table}

\section{Bounds for the numbers $F_v(a_1, ..., a_s; q)$}

First, we define a modification of the vertex Folkman numbers $F_v(a_1, ..., a_s; q)$ with the help of which we obtain upper bound for these numbers.

\begin{definition}
\label{definition: G overset(v)(rightarrow) uni(m)(p)}
Let $G$ be a graph and $m$ and $p$ be positive integer. The expression
\begin{equation*}
G \overset{v}{\rightarrow} \uni{m}{p}
\end{equation*}
means that for every choice of positive integers $a_1, ..., a_s$ ($s$ is not fixed), such that $m = \sum\limits_{i=1}^s (a_i - 1) + 1$ and $\max\set{a_1, ..., a_s} \leq p$, we have
\begin{equation*}
G \overset{v}{\rightarrow} (a_1, ..., a_s).
\end{equation*}
\end{definition}

\begin{example}
\label{example: K_m overset(v)(rightarrow) uni(m)(p)}
$K_m \overset{v}{\rightarrow} \uni{m}{p}, \quad \forall p$ (obviously).
\end{example}

\begin{example}
\label{example: overline(C)_(2p + 1) overset(v)(rightarrow) uni(p + 1)(p)}
\cite{LRU01}
Let us notice that $\overline{C}_{2p + 1} \overset{v}{\rightarrow} \uni{(p + 1)}{p}$. Indeed, let $b_1, ..., b_s$ be positive integers, such that $\sum\limits_{i = 1}^{s}(b_i - 1) + 1 = p + 1$ and $\max\set{b_1, ..., b_s} \leq p$. Assume that there exists $s$-coloring $\V(G) = V_1 \cup ... \cup V_s$, such that $V_i$ does not contain a $b_i$-clique. Then $\abs{V_i} \leq 2b_i - 2$ and $\abs{\V(G)} = \sum\limits_{i = 1}^{s}\abs{V_i} \leq 2\sum\limits_{i = 1}^{s}(b_i - 1) = 2p$ which is a contradiction.
\end{example}

Define:

$\wH{m}{p}{q} = \set{G : G \overset{v}{\rightarrow} \uni{m}{p} \mbox{ and } \omega(G) < q}$.

$\wFv{m}{p}{q} = \min\set{\abs{\V(G)} : G \in \wH{m}{p}{q}}$.

\begin{proposition}
\label{proposition: wFv(m)(p)(q) exists}
$\wH{m}{p}{q} \neq \emptyset$, i.e. $\wFv{m}{p}{q}$ exists $\Leftrightarrow$ $q > \min\set{m , p}$.
\end{proposition}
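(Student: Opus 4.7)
The plan is to prove the equivalence by handling each direction with a single explicit argument.

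For the necessity direction (``$\Rightarrow$''), I would assume $\wH{m}{p}{q} \neq \emptyset$ and fix some $G$ in that set. The idea is to exhibit a single admissible tuple and evaluate $G \overset{v}{\rightarrow} (a_1, \ldots, a_s)$ on the monochromatic colouring. A natural choice is $(a_1, \ldots, a_s) = (\min\{m, p\}, 2, 2, \ldots, 2)$ with $m - \min\{m, p\}$ copies of $2$: one verifies immediately that $\sum_{i=1}^s (a_i - 1) + 1 = m$ and $\max\{a_1, \ldots, a_s\} \leq p$, so the tuple is admissible (and in the boundary case $m \leq p$ it degenerates to the singleton $(m)$). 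Placing every vertex of $G$ into colour $1$ and applying the arrow relation then forces a monochromatic $\min\{m,p\}$-clique, so $\omega(G) \geq \min\{m, p\}$ and hence $q > \omega(G) \geq \min\{m, p\}$.

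For the sufficiency direction (``$\Leftarrow$''), I would assume $q > \min\{m, p\}$ and construct an explicit $G \in \wH{m}{p}{q}$ via a disjoint-union construction. The set $\mathcal{T}$ of admissible tuples $(a_1, \ldots, a_s)$ with $\sum_{i=1}^s (a_i - 1) + 1 = m$, $\max\{a_1, \ldots, a_s\} \leq p$, and (by the symmetry discussed in the introduction) every $a_i \geq 2$, is finite. For each $\tau \in \mathcal{T}$ one has $\max \tau \leq \min\{m, p\}$: the bound $\max \tau \leq p$ is admissibility itself, and $\max \tau \leq m$ because any single coordinate of $\tau$ is dominated by $\sum (a_i - 1) + 1 = m$. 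Therefore $q > \max \tau$, and by (\ref{equation: F_v(a_1, ..., a_s; q) exists}) some $G_\tau \in \mathcal{H}(\tau; q)$ exists. Let $G$ be the disjoint union of the $G_\tau$ over $\tau \in \mathcal{T}$. Then $\omega(G) = \max_\tau \omega(G_\tau) < q$, and since each $G_\tau$ sits as an induced subgraph of $G$ and the arrow relation is monotone under vertex addition (a colouring of $G$ restricts to one of $G_\tau$, whose monochromatic clique persists in $G$), we obtain $G \overset{v}{\rightarrow} \tau$ for every $\tau$, i.e.\ $G \overset{v}{\rightarrow} \uni{m}{p}$.

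Neither direction presents a serious obstacle. The only subtlety worth flagging is the uniform choice of tuple in the necessity direction, which handles $m \leq p$ and $m > p$ simultaneously, and the fact that the disjoint-union construction in the sufficiency direction legitimately combines finitely many independently chosen graphs into a single witness---this works precisely because $\mathcal{T}$ is finite and the arrow property is monotone, and it reduces the proposition to the already-cited existence theorem (\ref{equation: F_v(a_1, ..., a_s; q) exists}) for the classical vertex Folkman numbers.
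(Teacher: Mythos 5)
Your proof is correct and follows essentially the same route as the paper: for necessity you evaluate the arrow relation on the tuple $(2,\ldots,2,p)$ (degenerating to $(m)$ when $m\le p$) to force $\omega(G)\ge\min\{m,p\}$, and for sufficiency you take the union over the finitely many admissible tuples of witnesses supplied by (\ref{equation: F_v(a_1, ..., a_s; q) exists}). The only (harmless) difference is that your observation $\max\tau\le\min\{m,p\}$ lets you treat both directions uniformly, whereas the paper splits into the cases $m\le p$ (using $K_m$ directly) and $m>p$.
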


\begin{proof}
Let $\wH{m}{p}{q} \neq \emptyset$ and $G \in \wH{m}{p}{q}$. If $m \leq p$, then $G \overset{v}{\rightarrow} (m)$ and it follows $\omega(G) \geq m$. Since $\omega(G) < q$, we obtain $q > m$. Let $m > p$. Then there exist positive integers $a_1, ..., a_s$, such that $m = \sum_{i = 1}^s (a_i - 1) + 1$ and $p = \max\set{a_1, ..., a_s}$, for example $a_1 = ... = a_{m - p} = 2$ and $a_{m - p + 1} = p$. Since $G \overset{v}{\rightarrow} (a_1, ..., a_s)$, it follows that $\omega(G) \geq p$ and $q > p$. Therefore, if $\mathcal{H}(m; p; q) \neq \emptyset$, then $q > \min\set{m, p}$.

Let $q > \min\set{m, p}$. If $m \geq p$, then $q > p$. According to (\ref{equation: F_v(a_1, ..., a_s; q) exists}), for every choice of positive integers $a_1, ..., a_s$, such that $m = \sum_{i = 1}^s (a_i - 1) + 1$ and $\max\set{a_1, ..., a_s} \leq p$ there exists a graph $G(a_1, ..., a_s) \in \mathcal{H}(a_1, ..., a_s; q)$. Let $G$ be the union of all graphs $G(a_1, ..., a_s)$. It is clear that $G \in \wH{m}{p}{q}$. If $m \leq p$, then $m < q$ and therefore $K_m \in \wH{m}{p}{q}$.
\end{proof}

The following theorem gives bounds for the numbers $F_v(a_1, ..., a_s; q)$:

\begin{theorem}
\label{theorem: F_v(2_(m - p), p; q) leq F_v(a_1, ..., a_s; q) leq wFv(m)(p)(q)}
Let $a_1, ..., a_s$ be positive integers and $m$ and $p$ are defined by (\ref{equation: m and p}), $q > p$. Then
\begin{equation*}
F_v(2_{m - p}, p; q) \leq F_v(a_1, ..., a_s; q) \leq \wFv{m}{p}{q}.
\end{equation*}
\end{theorem}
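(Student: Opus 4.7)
The plan is to prove both inequalities by establishing set-theoretic inclusions between the corresponding families of arrowing graphs and then taking minima of $|\V(G)|$.

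For the upper bound $F_v(a_1, ..., a_s; q) \leq \wFv{m}{p}{q}$, I would argue that $\wH{m}{p}{q} \subseteq \mathcal{H}(a_1, ..., a_s; q)$ and that this is essentially a tautology given Definition \ref{definition: G overset(v)(rightarrow) uni(m)(p)}. Indeed, the tuple $(a_1, \dots, a_s)$ itself satisfies $m = \sum_{i=1}^s(a_i-1)+1$ and $\max\{a_1,\dots,a_s\} = p \leq p$, so any $G$ with $G \overset{v}{\rightarrow} \uni{m}{p}$ automatically satisfies $G \overset{v}{\rightarrow} (a_1, \dots, a_s)$; coupled with $\omega(G) < q$ this gives $G \in \mathcal{H}(a_1, \dots, a_s; q)$. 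Taking $|\V(G)|$ over an extremal $G \in \wH{m}{p}{q}$ yields the bound.

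For the lower bound $F_v(2_{m-p}, p; q) \leq F_v(a_1, \dots, a_s; q)$, the key is to show $\mathcal{H}(a_1, \dots, a_s; q) \subseteq \mathcal{H}(2_{m-p}, p; q)$; then taking the minimum of $|\V(G)|$ on the smaller family gives the inequality. It is enough to prove that $G \overset{v}{\rightarrow} (a_1, \dots, a_s)$ implies $G \overset{v}{\rightarrow} (2_{m-p}, p)$. I would do this by contraposition. WLOG assume $a_s = p$. Suppose there is a coloring $\V(G) = V_1 \cup \dots \cup V_{m-p} \cup V_{m-p+1}$ witnessing $G \overset{v}{\nrightarrow} (2_{m-p}, p)$, so $V_1, \dots, V_{m-p}$ are independent and $V_{m-p+1}$ contains no $p$-clique. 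Since $\sum_{i=1}^{s-1}(a_i - 1) = (m-1) - (a_s-1) = m-p$, I can partition the independent sets $V_1, \dots, V_{m-p}$ into $s-1$ groups $W_1, \dots, W_{s-1}$, where $W_i$ is the union of exactly $a_i - 1$ of the $V_j$'s. Setting $W_s = V_{m-p+1}$ produces an $s$-coloring of $\V(G)$.

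The central small observation I would flag is that a union of $k$ independent sets has clique number at most $k$, because any clique contains at most one vertex from each independent set. Hence $W_i$ (a union of $a_i - 1$ independent sets) contains no $a_i$-clique for $i = 1, \dots, s-1$, while $W_s = V_{m-p+1}$ contains no $a_s$-clique by construction. This coloring contradicts $G \overset{v}{\rightarrow} (a_1, \dots, a_s)$, completing the contrapositive. I do not expect a serious obstacle here: the arithmetic identity $\sum_{i=1}^{s-1}(a_i-1) = m-p$ is exactly what makes the partition step fit, and the existence of both Folkman numbers under consideration is assured by (\ref{equation: F_v(a_1, ..., a_s; q) exists}) together with the hypothesis $q > p$.
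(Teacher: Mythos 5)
Your proposal is correct and matches the paper's proof in substance: the upper bound is obtained from the very same inclusion $\wH{m}{p}{q} \subseteq \mathcal{H}(a_1, \dots, a_s; q)$, and the lower bound rests on the implication $G \overset{v}{\rightarrow} (a_1, \dots, a_s) \Rightarrow G \overset{v}{\rightarrow} (2_{m-p}, p)$, which the paper derives by iterating the single refinement step $a_i \mapsto (2, a_i - 1)$ while you establish it in one shot by partitioning the $m-p$ independent colour classes into blocks of sizes $a_i - 1$ and using that a union of $k$ independent sets is $K_{k+1}$-free. The two arguments are the same observation unrolled versus iterated, and both are valid.
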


\begin{proof}
The right inequality follows from the inclusion
\begin{equation*}
\wH{m}{p}{q} \subseteq \mathcal{H}(a_1, ..., a_s; q).
\end{equation*}
In order to prove the left inequality, let us notice that if $a_i \geq 3$, then
\begin{equation}
\label{equation: G overset(v)(rightarrow) (a_1, ..., a_s) Rightarrow G overset(v)(rightarrow) (a_1, ..., a_(i - 1), 2, a_i - 1, ..., a_s)}
G \overset{v}{\rightarrow} (a_1, ..., a_s) \Rightarrow G \overset{v}{\rightarrow} (a_1, ..., a_{i - 1}, 2, a_i - 1, ..., a_s).
\end{equation}
Since $m(a_1, ..., a_s) = m(a_1, ..., a_{i - 1}, 2, a_i - 1, ..., a_s)$, by successively applying (\ref{equation: G overset(v)(rightarrow) (a_1, ..., a_s) Rightarrow G overset(v)(rightarrow) (a_1, ..., a_(i - 1), 2, a_i - 1, ..., a_s)}) we obtain
\begin{equation}
\label{equation: G overset(v)(rightarrow) (a_1, ..., a_s) Rightarrow G overset(v)(rightarrow) (2_(m - p), p)}
G \overset{v}{\rightarrow} (a_1, ..., a_s) \Rightarrow G \overset{v}{\rightarrow} (2_{m - p}, p)
\end{equation}
\begin{equation}
\label{equation: G overset(v)(rightarrow) (a_1, ..., a_s) Rightarrow G overset(v)(rightarrow) (2_(m - 1))}
G \overset{v}{\rightarrow} (a_1, ..., a_s) \Rightarrow G \overset{v}{\rightarrow} (2_{m - 1}).
\end{equation}
From (\ref{equation: G overset(v)(rightarrow) (a_1, ..., a_s) Rightarrow G overset(v)(rightarrow) (2_(m - p), p)}) it follows
\begin{equation*}
F_v(a_1, ..., a_s; q) \geq F_v(2_{m - p}, p; q).
\end{equation*}
\end{proof}

Since $G \overset{v}{\rightarrow} (2_{m - 1}) \Leftrightarrow \chi(G) \geq m$, from (\ref{equation: G overset(v)(rightarrow) (a_1, ..., a_s) Rightarrow G overset(v)(rightarrow) (2_(m - 1))}) it becomes clear that
\begin{equation}
\label{equation: G overset(v)(rightarrow) (a_1, ..., a_s) Rightarrow chi(G) geq m}
G \overset{v}{\rightarrow} (a_1, ..., a_s) \Rightarrow \chi(G) \geq m, \quad\mbox{\cite{Nen01}}.
\end{equation}

This fact is used later in the proof of Theorem \ref{theorem: min_(r geq 2) set(F_v(2_r, p; r + p - 1) - r) = F_v(2_(r_0), p; r_0 + p - 1) - r_0}.

The bounds from Theorem \ref{theorem: F_v(2_(m - p), p; q) leq F_v(a_1, ..., a_s; q) leq wFv(m)(p)(q)} are useful because in general they are easier to estimate and compute than the numbers $F_v(a_1, ..., a_s)$ themselves. Later we compute the exact value of the numbers $F_v(2_{m - 5}, 5; m-1)$ (see Corollary \ref{corollary: F_v(2_r, 5; r + 4) = r + 14}) and the numbers $\wFv{m}{5}{m - 1}$ (see Theorem \ref{theorem: wFv(m)(5)(m - 1) = ...}). This way, with the help of Theorem \ref{theorem: F_v(2_(m - p), p; q) leq F_v(a_1, ..., a_s; q) leq wFv(m)(p)(q)}, Theorem \ref{theorem: F_v(a_1, ..., a_s; m - 1) = m + 9, max set(a_1, ..., a_s) = 5} is proved. Similarly, we obtain the bounds of Theorem \ref{theorem: m + 9 leq F_v(a_1, ..., a_s; m - 1) leq m + 10, max set(a_1, ..., a_s) = 6}

\begin{remark}
It is easy to see that if $q > m$, then $F_v(a_1, ..., a_s; q) = \wFv{m}{p}{q} = m$. From Theorem \ref{theorem: F_v(a_1, ..., a_s; m) = m + p} it follows $F_v(a_1, ..., a_s; m) = \wFv{m}{p}{q} = m + p$. If $q = m - 1$ and $p \leq 4$ according to (\ref{equation: F_v(a_1, ..., a_s, m - 1) = ...}), we also have $F_v(a_1, ..., a_s; q) = \wFv{m}{p}{q}$. The first case in which the upper bound in \ref{theorem: F_v(2_(m - p), p; q) leq F_v(a_1, ..., a_s; q) leq wFv(m)(p)(q)} is not reached is $m = 7, p = 5, q = 6$, since $\wFv{7}{5}{6} = 17$ (see Theorem \ref{theorem: wFv(m)(5)(m - 1) = ...}) and the corresponding numbers $F_v(a_1, ..., a_s; q) \leq 16$.
\end{remark}

\section{Some necessary results for the numbers\\ $F_v(2_r, p, r + p - 1), \ p \geq 2$}

In this section we prove that the computation of the lower bound in Theorem \ref{theorem: F_v(2_(m - p), p; q) leq F_v(a_1, ..., a_s; q) leq wFv(m)(p)(q)} in the case $q = m - 1$, i.e. computation of the numbers $F_v(2_r, p, r + p - 1)$ where $p$ is fixed, is reduced to the computation of a finite number of these numbers (Theorem \ref{theorem: min_(r geq 2) set(F_v(2_r, p; r + p - 1) - r) = F_v(2_(r_0), p; r_0 + p - 1) - r_0}).\\
It is easy to prove that
\begin{equation*}
G \overset{v}{\rightarrow} (a_1, ..., a_s) \Rightarrow K_1 + G \overset{v}{\rightarrow} (2, a_1, ..., a_s).
\end{equation*}

Therefore, it is true that
\begin{equation}
\label{equation: G overset(v)(rightarrow) (a_1, ..., a_s) Rightarrow K_t + G overset(v)(rightarrow) (2_t, a_1, ..., a_s)}
G \overset{v}{\rightarrow} (a_1, ..., a_s) \Rightarrow K_t + G \overset{v}{\rightarrow} (2_t, a_1, ..., a_s).
\end{equation}

\begin{lemma}
\label{lemma: F_v(2_r, p; r + p - 1) leq F_v(2_s, p; s + p - 1) + r - s}
Let $2 \leq s \leq r$. Then
\begin{equation*}
F_v(2_r, p; r + p - 1) \leq F_v(2_s, p; s + p - 1) + r - s.
\end{equation*}
\end{lemma}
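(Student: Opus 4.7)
The plan is to exhibit, for each $r \geq s \geq 2$, a witness graph for $\mathcal{H}(2_r, p; r+p-1)$ whose order exceeds that of an extremal witness for $\mathcal{H}(2_s, p; s+p-1)$ by exactly $r-s$. The natural candidate is the join $K_{r-s} + G$, which adds a clique of size $r-s$ fully adjacent to all vertices of a smaller Folkman witness $G$. This construction is tailor-made for the implication proved just before the lemma, namely $G \overset{v}{\rightarrow} (a_1, \ldots, a_s) \Rightarrow K_t + G \overset{v}{\rightarrow} (2_t, a_1, \ldots, a_s)$.

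More concretely, I would begin by choosing an extremal graph $G \in \mathcal{H}(2_s, p; s+p-1)$, so that $G \overset{v}{\rightarrow} (2_s, p)$, $\omega(G) \leq s+p-2$, and $|\V(G)| = F_v(2_s, p; s+p-1)$. Setting $t = r - s \geq 0$ and applying the above implication with the color tuple $(2_s, p)$, I would conclude $K_{r-s} + G \overset{v}{\rightarrow} (2_r, p)$. For the clique number, I would use the standard identity $\omega(K_{r-s} + G) = (r-s) + \omega(G)$ for a join with a complete graph, which gives $\omega(K_{r-s}+G) \leq (r-s) + (s+p-2) = r+p-2 < r+p-1$. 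Thus $K_{r-s} + G \in \mathcal{H}(2_r, p; r+p-1)$, and counting vertices yields
\begin{equation*}
F_v(2_r, p; r+p-1) \leq |\V(K_{r-s} + G)| = (r-s) + F_v(2_s, p; s+p-1).
\end{equation*}

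There is no substantive obstacle: the join implication has already been recorded in the excerpt, and the clique-number identity for a join with a complete graph is immediate. The only subtlety worth a sentence of verification is that both Folkman numbers in the inequality exist, but this is guaranteed by the existence criterion, since $r \geq s \geq 2$ forces $s+p-1 > p$ and $r+p-1 > p$. A minor bookkeeping remark: when $r = s$ the join reduces to $G$ itself and the lemma becomes the trivial equality, so the argument seamlessly covers the boundary case.
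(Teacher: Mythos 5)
Your proof is correct and is essentially identical to the paper's: both take an extremal graph $G \in \mathcal{H}(2_s, p; s+p-1)$, form the join $K_{r-s} + G$, invoke the implication $G \overset{v}{\rightarrow} (a_1, \ldots, a_s) \Rightarrow K_t + G \overset{v}{\rightarrow} (2_t, a_1, \ldots, a_s)$, and check the clique number to conclude. Your extra remarks on existence and on the boundary case $r = s$ are fine but not needed.
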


\begin{proof}
Let $G$ be an extremal graph in $\mathcal{H}(2_s, p; s + p - 1)$. Consider $G_1 = K_{r - s} + G$. According to (\ref{equation: G overset(v)(rightarrow) (a_1, ..., a_s) Rightarrow K_t + G overset(v)(rightarrow) (2_t, a_1, ..., a_s)}), $G_1 \overset{v}{\rightarrow} (2_r, p)$. Since $\omega(G_1) = r - s + \omega(G) < r + p - 1$, it follows that $G_1 \in \mathcal{H}(2_r, p; r + p - 1)$. Therefore,

$F_v(2_r, p; r + p - 1) \leq \abs{\V(G_1)} = F_v(2_s, p; s + p - 1) + r - s$.
\end{proof}

\begin{theorem}
\label{theorem: min_(r geq 2) set(F_v(2_r, p; r + p - 1) - r) = F_v(2_(r_0), p; r_0 + p - 1) - r_0}
Let $r_0(p) = r_0$ be the smallest positive integer for which
\begin{equation*}
\min_{r \geq 2} \set{F_v(2_r, p; r + p - 1) - r} = F_v(2_{r_0}, p; r_0 + p - 1) - r_0.
\end{equation*}

Then:
\begin{flalign*}
F_v(2_r, p; r + p - 1) = F(2_{r_0}, p; r_0 + p - 1) + r - r_0, \quad r \geq r_0. && \tag{a}
\end{flalign*}
\begin{flalign*}
\mbox{if $r_0 = 2$, then} && \tag{b}
\end{flalign*}

$F_v(2_r, p; r + p - 1) = F_v(2, 2, p; p + 1) + r - 2, \quad r \geq 2$

\begin{flalign*}
\mbox{if $r_0 > 2$ and $G$ is an extremal graph in $\mathcal{H} (2_{r_0}, p; r_0 + p - 1)$, then} && \tag{c}
\end{flalign*}

$G \overset{v}{\rightarrow} (2, r_0 + p - 2).$

\begin{flalign*}
r_0 < F_v(2, 2, p; p + 1) - 2p. \mbox{ In particular, for $p = 5$ we have $r_0(5) \leq 5$} . && \tag{d}
\end{flalign*}
\end{theorem}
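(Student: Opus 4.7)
The plan is to establish parts (a) through (d) in sequence, each building on the previous.

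For (a), I would apply Lemma~\ref{lemma: F_v(2_r, p; r + p - 1) leq F_v(2_s, p; s + p - 1) + r - s} with $s = r_0$ to obtain $F_v(2_r, p; r+p-1) - r \leq F_v(2_{r_0}, p; r_0+p-1) - r_0$ for all $r \geq r_0$; the reverse inequality is immediate from the definition of $r_0$ as a minimizer, so equality holds. Part (b) is the specialization of (a) to $r_0 = 2$.

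For (c), I would argue by contradiction: assume $r_0 > 2$, $G$ is extremal in $\mathcal{H}(2_{r_0}, p; r_0+p-1)$, and $G \overset{v}{\nrightarrow} (2, r_0+p-2)$, so there exists an independent $V_1 \subseteq \V(G)$ with $\omega(G - V_1) \leq r_0+p-3$. A small but essential preliminary is that $V_1$ may be taken nonempty, since if $V_1 = \emptyset$ works then so does $V_1 = \{v\}$ for any vertex $v$ (as $\omega(G-v) \leq \omega(G)$). Setting $H = G - V_1$, any $r_0$-coloring witnessing $H \overset{v}{\nrightarrow} (2_{r_0-1}, p)$ could be extended by $V_1$ as a new color class, contradicting $G \overset{v}{\rightarrow} (2_{r_0}, p)$; hence $H \in \mathcal{H}(2_{r_0-1}, p; r_0+p-2)$. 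This gives $F_v(2_{r_0-1}, p; r_0+p-2) \leq |\V(H)| \leq F_v(2_{r_0}, p; r_0+p-1) - 1$, which rearranges to $F_v(2_{r_0-1}, p; r_0+p-2) - (r_0-1) \leq F_v(2_{r_0}, p; r_0+p-1) - r_0$, contradicting the strict minimality of $r_0$.

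For (d), I would squeeze $r_0$ between an upper and a lower bound on $F_v(2_{r_0}, p; r_0+p-1)$. For the upper bound, the strict minimality of $r_0 > 2$ combined with Lemma~\ref{lemma: F_v(2_r, p; r + p - 1) leq F_v(2_s, p; s + p - 1) + r - s} gives $F_v(2_{r_0}, p; r_0+p-1) \leq F_v(2,2,p;p+1) + r_0 - 3$. For the lower bound, (c) places the extremal graph $G$ in $\mathcal{H}(2, r_0+p-2; r_0+p-1)$, whose unique extremal graph by Theorem~\ref{theorem: F_v(a_1, ..., a_s; m) = m + p}(b) is $\overline{C}_{2r_0+2p-3}$. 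The main obstacle, which I expect to be the key technical step, is to rule out $G = \overline{C}_{2r_0+2p-3}$; I plan to do this by exhibiting an explicit $(r_0+1)$-coloring witnessing $\overline{C}_{2r_0+2p-3} \overset{v}{\nrightarrow} (2_{r_0}, p)$: take the $r_0$ pairs $\{2i, 2i+1\}$ for $i = 0, \ldots, r_0-1$ as independent color classes (each is an edge of $C_{2r_0+2p-3}$, hence independent in the complement), and the remaining arc of $2p-3$ consecutive vertices as the last class, whose induced subgraph is $\overline{P}_{2p-3}$ with clique number $\alpha(P_{2p-3}) = p-1 < p$. Uniqueness then forces $F_v(2_{r_0}, p; r_0+p-1) \geq 2r_0+2p-2$, and combining with the upper bound yields $r_0 \leq F_v(2,2,p;p+1) - 2p - 1$, the desired strict inequality. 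The case $r_0 = 2$ follows directly from (\ref{equation: m + p + 2 leq F_v(a_1, ..., a_s; m - 1) leq m + 3p}), which gives $F_v(2,2,p;p+1) \geq 2p+3 > 2p+2$; finally, specializing to $p = 5$ with $F_v(2,2,5;6) = 16$ from Theorem~\ref{theorem: F_v(2, 2, 5; 6) = 16} gives $r_0(5) \leq 5$.
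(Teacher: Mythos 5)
Your proof is correct and follows essentially the same route as the paper's in all four parts: (a) from the definition of $r_0$ plus Lemma \ref{lemma: F_v(2_r, p; r + p - 1) leq F_v(2_s, p; s + p - 1) + r - s}, (c) by the same independent-set/deletion contradiction, and (d) by the same squeeze using Theorem \ref{theorem: F_v(a_1, ..., a_s; m) = m + p}. The only divergence is the sub-step of (d) that rules out $G = \overline{C}_{2r_0+2p-3}$: the paper compares chromatic numbers ($\chi(\overline{C}_{2r_0+2p-3}) = r_0+p-1$ while $\chi(G) \geq r_0+p$ by (\ref{equation: G overset(v)(rightarrow) (a_1, ..., a_s) Rightarrow chi(G) geq m})), whereas you give an explicit bad $(r_0+1)$-coloring of $\overline{C}_{2r_0+2p-3}$ by $r_0$ cycle-edges plus a $(2p-3)$-arc; the two arguments are equivalent in substance and both are valid.
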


\begin{proof}
(a) According to the definition of $r_0 = r_0(p)$, if $r \geq 2$, then

$F_v(2_r, p; r + p - 1) - r \geq F_v(2_{r_0}, p; r_0 + p - 1) - r_0$,\\
i.e.

$F_v(2_r, p; r + p - 1) \geq F_v(2_{r_0}, p; r_0 + p - 1) + r - r_0$\\
If $r \geq r_0$, according to Lemma \ref{lemma: F_v(2_r, p; r + p - 1) leq F_v(2_s, p; s + p - 1) + r - s}, the opposite inequality is also true.

(b) This equality is the special case $r_0 = 2$ of the equality (a).

(c) Suppose the opposite is true and let $G$ be an extremal graph in $\mathcal{H}(2_{r_0}, p; r_0 + p - 1)$ and $V(G) = V_1 \cup V_2, \ V_1 \cap V_2 = \emptyset$, where $V_1$ is an independent set and $V_2$ does not contain an $(r_0 + p - 2)$-clique. We can suppose that $V_1 \neq \emptyset$. Let $G_1 = \G[V_2]$. Then $\omega(G_1) < r + p - 2$ and from $G \overset{v}{\rightarrow} (2_{r_0}, p)$ it follows $G_1 \overset{v}{\rightarrow} (2_{r_0 - 1}, p)$. Therefore, $G_1 \in \mathcal{H}(2_{r_0 - 1}, p; r_0 + p - 2)$ and

$\abs{\V(G_1)} \geq F_v(2_{r_0 - 1}, p; r_0 + p - 2)$.\\
Since $\abs{\V(G)} = F_v(2_{r_0}, p; r_0 + p - 1)$ and $\abs{\V(G_1)} \leq \abs{\V(G)} - 1$ we obtain

$F_v(2_{r_0 - 1}, p; r_0 + p - 2) - (r_0 - 1) \leq F_v(2_{r_0}, p; r_0 + p - 1) - r_0$,\\
which contradicts the definition of $r_0 = r_0(p)$.

(d) According to (\ref{equation: m + p + 2 leq F_v(a_1, ..., a_s; m - 1) leq m + 3p}), $F_v(2, 2, p; p + 1) \geq 2p + 4$. Therefore, if $r_0 = 2$, the inequality (d) is obvious.

Let $r_0 \geq 3$ and $G$ be an extremal graph in $\mathcal{H}(2_{r_0}, p; r_0 + p - 1)$. According to (c) and Theorem \ref{theorem: F_v(a_1, ..., a_s; m) = m + p}, $\abs{\V(G)} \geq 2r_0 + 2p - 3$. Let us notice that $\chi(\overline{C}_{2r_0 + 2p - 3}) = r_0 + p - 1$ and $\chi(G) \geq r_0 + p = m$ by (\ref{equation: G overset(v)(rightarrow) (a_1, ..., a_s) Rightarrow chi(G) geq m}). Therefore, $G \neq \overline{C}_{2r_0 + 2p - 3}$ and from Theorem \ref{theorem: F_v(a_1, ..., a_s; m) = m + p} we obtain

$\abs{\V(G)} = F_v(2_{r_0}, p; r_0 + p - 1) \geq 2r_0 + 2p - 2$.\\
Since $r_0 \geq 3$ from the definition of $r_0$ we have

$F_v(2_{r_0}, p; r_0 + p - 1) < F_v(2, 2, p; p + 1) + r_0 - 2$.\\
Thus, we proved that

$2r_0 + 2p - 2 < F_v(2, 2, p; p + 1) + r_0 - 2$,
i.e.

$r_0 < F_v(2, 2, p; p + 1) - 2p$.

\end{proof}

\begin{remark}
Since we suppose that $r \geq 2$, according to (\ref{equation: F_v(a_1, ..., a_s; q) exists}) all Folkman numbers in the proof of Theorem \ref{theorem: min_(r geq 2) set(F_v(2_r, p; r + p - 1) - r) = F_v(2_(r_0), p; r_0 + p - 1) - r_0} exist.
\end{remark}

\begin{example}
\label{example: r_0(2) = 4, r_0(3) = 3, r_0(4) = 2}
From (\ref{equation: F_v(a_1, ..., a_s, m - 1) = ...}) and $F_v(2, 2, 2; 3) = F_v(2, 2, 2, 2; 4) = 11$ it follows $r_0(2) = 4$, and from (\ref{equation: F_v(a_1, ..., a_s, m - 1) = ...}) and $F_v(2, 2, 3; 4) = 14$ it follows $r_0(3) = 3$. Also, from (\ref{equation: F_v(a_1, ..., a_s, m - 1) = ...}) we see that $r_0(4) = 2$.
\end{example}

We suppose that the following is true:
\begin{conjecture}
\label{conjecture: F_v(2_r, p; r + p - 1) = F_v(2, 2, p; p + 1) + r - 2}
If $p \geq 4$, then $r_0(p) = 2$ and therefore, according to Theorem \ref{theorem: min_(r geq 2) set(F_v(2_r, p; r + p - 1) - r) = F_v(2_(r_0), p; r_0 + p - 1) - r_0}(b),
\begin{equation*}
F_v(2_r, p; r + p - 1) = F_v(2, 2, p; p + 1) + r - 2, \quad r \geq 2.
\end{equation*}
\end{conjecture}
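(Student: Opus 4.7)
The plan is to prove $r_0(p) = 2$ for every $p \geq 4$; once this is established, the equality $F_v(2_r, p; r+p-1) = F_v(2,2,p;p+1) + r - 2$ is immediate from Theorem~\ref{theorem: min_(r geq 2) set(F_v(2_r, p; r + p - 1) - r) = F_v(2_(r_0), p; r_0 + p - 1) - r_0}(b). Lemma~\ref{lemma: F_v(2_r, p; r + p - 1) leq F_v(2_s, p; s + p - 1) + r - s} already furnishes the inequality $F_v(2_r, p; r+p-1) - r \leq F_v(2,2,p;p+1) - 2$, so the content is the matching lower bound. I would argue by contradiction: assume $r_0 = r_0(p) \geq 3$ and let $G$ be an extremal graph in $\mathcal{H}(2_{r_0}, p; r_0+p-1)$. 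By Theorem~\ref{theorem: min_(r geq 2) set(F_v(2_r, p; r + p - 1) - r) = F_v(2_(r_0), p; r_0 + p - 1) - r_0}(c) we have $G \overset{v}{\rightarrow} (2, r_0 + p - 2)$, which together with $\omega(G) < r_0 + p - 1$ forces $\omega(G) = r_0 + p - 2$; applying the arrow relation to the bipartition $\{I, V(G) \setminus I\}$ for every independent set $I \subseteq V(G)$ then yields a strong \emph{clique-stability} property $\omega(G - I) = \omega(G)$.

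For small $p$ the conjecture reduces to already completed computations. The case $p = 4$ follows from the formulas in (\ref{equation: F_v(a_1, ..., a_s, m - 1) = ...}), which show $F_v(2_r, 4; r+3) - r = 11$ for every $r \geq 2$ and hence $r_0(4) = 2$ (see Example~\ref{example: r_0(2) = 4, r_0(3) = 3, r_0(4) = 2}). The case $p = 5$ would be handled by the results of this paper: Theorem~\ref{theorem: F_v(2, 2, 5; 6) = 16} gives $F_v(2,2,5;6) = 16$, and applying Theorem~\ref{theorem: F_v(a_1, ..., a_s; m - 1) = m + 9, max set(a_1, ..., a_s) = 5} to $(2_r, 5)$ (for which $m = r + 5$) gives $F_v(2_r, 5; r+4) = r + 14 = F_v(2,2,5;6) + r - 2$, so $r_0(5) = 2$ as well.

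The remaining range $p \geq 6$ is the genuine obstacle and is what keeps the statement a conjecture. My approach would be to convert the clique-stability property into a sharp vertex-count lower bound: count the $K_{\omega(G)}$-subgraphs that persist after deleting any independent set, and try to argue that their combined volume combined with $\chi(G) \geq r_0 + p$ from (\ref{equation: G overset(v)(rightarrow) (a_1, ..., a_s) Rightarrow chi(G) geq m}) forces $|V(G)| \geq F_v(2,2,p;p+1) + r_0 - 2$ with strict inequality whenever $r_0 \geq 3$. An alternative route is to invert the construction used in Lemma~\ref{lemma: F_v(2_r, p; r + p - 1) leq F_v(2_s, p; s + p - 1) + r - s}: show that such an extremal $G$ must contain a universal vertex $v$, so that $G - v \in \mathcal{H}(2_{r_0-1}, p; r_0+p-2)$ has one fewer vertex, contradicting the definition of $r_0$. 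I expect the universal-vertex claim to be the hard part: for $p \geq 6$ the absence of explicit data comparable to Theorem~\ref{theorem: abs(mathcal(H)(2, 2, 5; 6; 16)) = 147} makes it difficult to rule out clique-stable extremal graphs with no dominating vertex, and a full proof will likely require either a genuinely new structural lemma or computer-assisted enumeration of $\mathcal{H}(2,2,p;p+1)$ analogous to what is done here for $p = 5$.
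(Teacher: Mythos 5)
The statement you were asked to prove is labelled a \emph{conjecture} in the paper, and the paper itself offers no proof of it: the authors only establish the case $p=5$ (Theorem \ref{theorem: r_0(5) = 2} and Corollary \ref{corollary: F_v(2_r, 5; r + 4) = r + 14}), by using Theorem \ref{theorem: min_(r geq 2) set(F_v(2_r, p; r + p - 1) - r) = F_v(2_(r_0), p; r_0 + p - 1) - r_0}(d) to reduce the question to the three finite checks $F_v(2,2,2,5;7)>16$, $F_v(2,2,2,2,5;8)>17$, $F_v(2,2,2,2,2,5;9)>18$, each carried out by computer enumeration. Your proposal correctly recognizes this state of affairs, and the parts you actually settle are sound: $r_0(4)=2$ does follow from (\ref{equation: F_v(a_1, ..., a_s, m - 1) = ...}), and $r_0(5)=2$ is a result of this paper. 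One caveat on the $p=5$ citation: deriving $F_v(2_r,5;r+4)=r+14$ from Theorem \ref{theorem: F_v(a_1, ..., a_s; m - 1) = m + 9, max set(a_1, ..., a_s) = 5} is circular in the paper's logical order, since that theorem is itself proved \emph{from} Corollary \ref{corollary: F_v(2_r, 5; r + 4) = r + 14}; you should cite Theorem \ref{theorem: r_0(5) = 2} directly, which rests on the independent computations in Tables \ref{table: F_v(2, 2, 2, 5; 7) > 16}--\ref{table: F_v(2, 2, 2, 2, 2, 5; 9) > 18}.

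For $p\geq 6$ your sketch does not close the gap, as you acknowledge. The preliminary deductions are fine --- $G\overset{v}{\rightarrow}(2,r_0+p-2)$ from part (c) does force $\omega(G)=r_0+p-2$ and $\omega(G-I)=\omega(G)$ for every independent set $I$ --- but neither of your two proposed continuations is carried out: the ``volume'' count from clique-stability plus $\chi(G)\geq r_0+p$ is not shown to yield $|V(G)|\geq F_v(2,2,p;p+1)+r_0-2$, and the universal-vertex claim (which would indeed give $G-v\in\mathcal{H}(2_{r_0-1},p;r_0+p-2)$ and the desired contradiction) is exactly the structural statement that nobody knows how to prove. Note also that the only general constraint available is Theorem \ref{theorem: min_(r geq 2) set(F_v(2_r, p; r + p - 1) - r) = F_v(2_(r_0), p; r_0 + p - 1) - r_0}(d), namely $r_0(p)<F_v(2,2,p;p+1)-2p$; since $F_v(2,2,p;p+1)$ is unknown for $p\geq 6$, even the finite-reduction strategy that worked for $p=5$ cannot currently be set up, let alone executed. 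So your write-up is an honest and essentially accurate account of why the statement remains a conjecture, not a proof of it.
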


In this paper we prove this conjecture in the case $p = 5$ (see Theorem \ref{theorem: r_0(5) = 2} and Corollary \ref{corollary: F_v(2_r, 5; r + 4) = r + 14}).

\begin{corollary}
\label{corollary: F_v(a_1, ... a_s; m - 1) geq F_v(2_(r_0), p; r_0 + p - 1) + r - r_0}
Let $a_1, ..., a_s$ be positive integers, $m$ and $p$ are defined by (\ref{equation: m and p}), $m \geq p + 2$ and $r = m - p \geq r_0(p)$. Then
\begin{equation*}
F_v(a_1, ... a_s; m - 1) \geq F_v(2_{r_0}, p; r_0 + p - 1) + r - r_0.
\end{equation*}
In particular, if $r_0 = 2$, then
\begin{equation*}
F_v(a_1, ..., a_s; m - 1) \geq F_v(2, 2, p; p + 1) + r - 2.
\end{equation*}
\end{corollary}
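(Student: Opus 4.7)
The plan is to combine the two main ingredients established immediately before the corollary: the lower half of Theorem \ref{theorem: F_v(2_(m - p), p; q) leq F_v(a_1, ..., a_s; q) leq wFv(m)(p)(q)} and part (a) of Theorem \ref{theorem: min_(r geq 2) set(F_v(2_r, p; r + p - 1) - r) = F_v(2_(r_0), p; r_0 + p - 1) - r_0}. Setting $q = m - 1$ in Theorem \ref{theorem: F_v(2_(m - p), p; q) leq F_v(a_1, ..., a_s; q) leq wFv(m)(p)(q)} produces the canonical bound $F_v(a_1, \ldots, a_s; m-1) \geq F_v(2_{m-p}, p; m-1)$. The hypothesis of that theorem requires $q > p$, which is exactly the assumption $m \geq p + 2$.

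Next, I would rewrite the right-hand side using $r = m - p$, turning the inequality into $F_v(a_1, \ldots, a_s; m-1) \geq F_v(2_r, p; r + p - 1)$. Since by assumption $r \geq r_0(p)$, Theorem \ref{theorem: min_(r geq 2) set(F_v(2_r, p; r + p - 1) - r) = F_v(2_(r_0), p; r_0 + p - 1) - r_0}(a) applies and evaluates the right-hand side exactly as $F_v(2_{r_0}, p; r_0 + p - 1) + r - r_0$, yielding the stated inequality. Both Folkman numbers in play satisfy the existence criterion (\ref{equation: F_v(a_1, ..., a_s; q) exists}) since $r \geq r_0(p) \geq 2$ and $q = r + p - 1 > p$.

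The special case $r_0 = 2$ is just substitution: $F_v(2_{r_0}, p; r_0 + p - 1)$ becomes $F_v(2, 2, p; p+1)$, giving the second displayed inequality. Since the corollary is a direct synthesis of two results already proved, there is no real obstacle; the only point that requires attention is the verification that the hypotheses $q > p$ and $r \geq r_0(p) \geq 2$ are met, so that both theorems can be invoked without worrying about existence of the Folkman numbers involved.
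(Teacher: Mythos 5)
Your proposal is correct and is essentially identical to the paper's own proof: it applies the lower bound of Theorem \ref{theorem: F_v(2_(m - p), p; q) leq F_v(a_1, ..., a_s; q) leq wFv(m)(p)(q)} with $q = m-1$ and then evaluates $F_v(2_r, p; r+p-1)$ via Theorem \ref{theorem: min_(r geq 2) set(F_v(2_r, p; r + p - 1) - r) = F_v(2_(r_0), p; r_0 + p - 1) - r_0}(a). The added verification of the hypotheses $q > p$ and $r \geq r_0 \geq 2$ is a harmless (and welcome) bit of extra care.
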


\begin{proof}
According to Theorem \ref{theorem: F_v(2_(m - p), p; q) leq F_v(a_1, ..., a_s; q) leq wFv(m)(p)(q)},

$F_v(a_1, ..., a_s; m - 1) \geq F_v(2_r, p; r + p - 1)$.\\
From this inequality and Theorem \ref{theorem: min_(r geq 2) set(F_v(2_r, p; r + p - 1) - r) = F_v(2_(r_0), p; r_0 + p - 1) - r_0}(a) we obtain the desired inequality.
\end{proof}

\section{Computation of $r_0(5)$}

In this section we prove the following:

\begin{theorem}
\label{theorem: r_0(5) = 2}
$r_0(5) = 2$
\end{theorem}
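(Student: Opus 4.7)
The strategy combines the structural content of Theorem~\ref{theorem: min_(r geq 2) set(F_v(2_r, p; r + p - 1) - r) = F_v(2_(r_0), p; r_0 + p - 1) - r_0} with the value $F_v(2, 2, 5; 6) = 16$ from Theorem~\ref{theorem: F_v(2, 2, 5; 6) = 16}, and closes the remaining possibilities by a finite computer check in the spirit of Section~2. First, part (d) of Theorem~\ref{theorem: min_(r geq 2) set(F_v(2_r, p; r + p - 1) - r) = F_v(2_(r_0), p; r_0 + p - 1) - r_0} applied with $p = 5$ yields $r_0(5) < 16 - 10 = 6$, so only $r_0(5) \in \{2, 3, 4, 5\}$ needs to be considered. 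Suppose for contradiction that $r_0 := r_0(5) \geq 3$; by the minimality of $r_0$ one has $F_v(2_{r_0}, 5; r_0 + 4) - r_0 < F_v(2, 2, 5; 6) - 2 = 14$, so any extremal $G \in \mathcal{H}(2_{r_0}, 5; r_0 + 4)$ satisfies $|\V(G)| \leq r_0 + 13$.

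Next, the structural part (c) of Theorem~\ref{theorem: min_(r geq 2) set(F_v(2_r, p; r + p - 1) - r) = F_v(2_(r_0), p; r_0 + p - 1) - r_0} gives $G \overset{v}{\rightarrow} (2, r_0 + 3)$, forcing $\omega(G) \geq r_0 + 3$; together with $\omega(G) < r_0 + 4$ this pins down $\omega(G) = r_0 + 3$ and places $G$ in $\mathcal{H}(2, r_0 + 3; r_0 + 4)$, while (\ref{equation: G overset(v)(rightarrow) (a_1, ..., a_s) Rightarrow chi(G) geq m}) supplies $\chi(G) \geq r_0 + 5$. From Theorem~\ref{theorem: F_v(a_1, ..., a_s; m) = m + p} we know $|\V(G)| \geq 2r_0 + 7$, with the unique graph of that order being $\overline{C}_{2r_0 + 7}$; but $\chi(\overline{C}_{2r_0 + 7}) = r_0 + 4$, so the chromatic constraint excludes it, leaving $|\V(G)| \geq 2r_0 + 8$. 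The admissible pairs $(r_0, |\V(G)|)$ are thereby confined to $(3, 14)$, $(3, 15)$, $(3, 16)$, $(4, 16)$, $(4, 17)$, and $(5, 18)$.

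What remains is to verify, for each of these six pairs $(r_0, n)$, that no graph $G \in \mathcal{H}(2, r_0 + 3; r_0 + 4; n)$ with $\chi(G) \geq r_0 + 5$ satisfies $G \overset{v}{\rightarrow} (2_{r_0}, 5)$; any such $G$ would be an extremal graph witnessing $r_0(5) = r_0 \in \{3, 4, 5\}$, so showing none exists forces $r_0(5) = 2$. I would carry out this verification exactly as in Section~2: generate the maximal graphs in each $\mathcal{H}(2, r_0 + 3; r_0 + 4; n)$ by Algorithm~\ref{algorithm: add_is_kn_free} (adding independent vertices to suitably chosen members of $\mathcal{H}(r_0 + 3; r_0 + 4; n - k)$, and picking up the independence-two maximal graphs from $\mathcal{R}(3, r_0 + 4; n)$), enumerate their edge-subgraphs, filter by $\chi(G) \geq r_0 + 5$, and test each candidate against the arrow relation $(2_{r_0}, 5)$. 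I expect the bottleneck to be $(r_0, n) = (3, 16)$: $\mathcal{H}(2, 6; 7; 16)$ is the largest of the families to be enumerated, and each check $G \overset{v}{\rightarrow} (2, 2, 2, 5)$ amounts to ruling out every partition of a $16$-vertex graph into three independent sets and one $K_5$-free set, so careful pruning via the chromatic and maximality conditions will be essential.
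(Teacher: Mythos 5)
Your reduction to a finite search is logically sound, and in places sharper than what the paper actually needs: the paper merely observes that $r_0(5)=r_0\in\set{3,4,5}$ would force $F_v(2_{r_0},5;r_0+4)\leq r_0+13$, and then verifies by computer that $\mathcal{H}(2,2,2,5;7;16)$, $\mathcal{H}(2,2,2,2,5;8;17)$ and $\mathcal{H}(2,2,2,2,2,5;9;18)$ are empty (checking only the top order suffices, since isolated vertices can always be added). The gap is in your final step. You propose to enumerate $\mathcal{H}(2,r_0+3;r_0+4;n)$ via Algorithm \ref{algorithm: add_is_kn_free}, but for $(r_0,n)=(3,16)$ the base set of that algorithm is $\mathcal{H}(6;7;16-k)$, i.e.\ \emph{all} graphs of order $16-k$ with clique number $6$. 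To keep $16-k\leq 10$ (the largest order for which exhaustive generation is realistic --- the paper already handles $12\,005\,168$ graphs of order $10$) you would need $k\geq 6$, which only yields the maximal graphs with $\alpha\geq 6$; the complementary cases $\alpha\leq 5$ would then require the Ramsey families $\mathcal{R}(\alpha+1,7;16)$, and the paper explicitly notes that even $\mathcal{R}(3,7)$ is not fully known. With $k=2$ or $3$ you would instead have to enumerate all graphs on $13$ or $14$ vertices with clique number $6$, which is hopeless. Moreover $\mathcal{H}(2,6;7;16)$ is a vastly larger family than $\mathcal{H}(2,2,2,5;7;16)$ (the latter turns out to be empty), so even the subsequent filtering and arrowing tests would be out of reach.

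The paper sidesteps all of this by peeling off the color classes one at a time rather than all at once: Proposition \ref{proposition: forming all maximal graphs in mathcal(H)(2_r, p; q; n)} shows that deleting an independent set from a maximal graph in $\mathcal{H}(2_r,5;q;n)$ lands in $\mathcal{H}(2_{r-1},5;q;n-k)$, so one climbs the chain $\mathcal{H}(5;7;10)\to\mathcal{H}(2,5;7;12)\to\mathcal{H}(2,2,5;7;14)\to\mathcal{H}(2,2,2,5;7;16)$ adding two independent vertices at each step (two rather than three precisely because $\mathcal{R}(3,7)$ is unavailable, while $\alpha\geq 2$ is automatic for every non-complete graph in these classes), and similarly for $r_0=4,5$. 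Each intermediate set stays in the low millions and the final ones are empty. Your structural use of part (c) and the chromatic bound is essentially a re-derivation of the argument inside the proof of part (d) and is not needed for the endgame; if you replace the single-shot appeal to $\mathcal{H}(2,r_0+3;r_0+4;n)$ by this incremental generation, your argument becomes the paper's. As written, the decisive computation cannot be carried out.
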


\begin{proof}
From Theorem \ref{theorem: min_(r geq 2) set(F_v(2_r, p; r + p - 1) - r) = F_v(2_(r_0), p; r_0 + p - 1) - r_0}(d) we have $r_0(5) \leq 5$. Therefore, we have to prove that $r_0(5) \neq 3$, $r_0(5) \neq 4$ and $r_0(5) \neq 5$, i.e. we have to prove the inequalities $F_v(2, 2, 2, 5; 7) > 16$, $F_v(2, 2, 2, 2, 5; 8) > 17$, $F_v(2, 2, 2, 2, 2, 5; 9) > 18$.

The proof of each of these  three inequalities consists of several steps, similarly to the proof of Theorem \ref{theorem: abs(mathcal(H)(2, 2, 5; 6; 16)) = 147}. Since not all graphs in $\mathcal{R}(3, 7)$ are known to us, this time in the process of extending graphs to maximal ones we are adding two independent vertices instead of three:

\begin{algorithm}
\label{algorithm: H(2_r, 5; q; n)}
Finding all maximal graphs in $\mathcal{H}(2_r, 5; q; n)$ starting from all maximal graphs in $\mathcal{H}(2_{r - 1}, 5; q; n - 2)$.

1. By removing edges from the maximal graphs in $\mathcal{H}(2_{r - 1}, 5; q; n - 2)$ find all graphs in this set which have the property that the addition of a new edge forms a new $(q - 1)$-clique.

2. Using Algorithm \ref{algorithm: add_is_kn_free} add two independent vertices to the graphs from step 1. to obtain all maximal graphs in $\mathcal{H}(2_r, 5; q; n)$.
\end{algorithm}

\subsection{Proof of $F_v(2, 2, 2, 5; 7) > 16$}

By checking all 10-vertex graphs we find the maximal graphs in $\mathcal{H}(5; 7; 10)$. Starting from them, by successively applying Algorithm \ref{algorithm: H(2_r, 5; q; n)}($n = 12, 14, 16; q = 7; r = 1, 2, 3$) we obtain the maximal graphs in the sets $\mathcal{H}(2, 5; 7; 12)$, $\mathcal{H}(2, 2, 5; 7; 14)$ and $\mathcal{H}(2, 2, 2, 5; 7; 16)$. The results are described in Table \ref{table: F_v(2, 2, 2, 5; 7) > 16}. There we can see that $\mathcal{H}(2, 2, 2, 5; 7; 16) = \emptyset$ and therefore $F_v(2, 2, 2, 5; 7) > 16$.

\begin{table}[h]
	\centering
	\begin{tabular}{| p{3.5cm} | p{2.5cm} | p{5cm} |}
		\hline
		set											& maximal graphs	& graphs in which the addition of an edge forms a new 6-clique	\\
		\hline
		$\mathcal{H}(5; 7; 10)$						& 8					& 324		\\
		$\mathcal{H}(2, 5; 7; 12)$					& 56				& 104 283	\\
		$\mathcal{H}(2, 2, 5; 7; 14)$				& 420				& 2 614 547	\\
		$\mathcal{H}(2, 2, 2, 5; 7; 16)$			& 0					& 0			\\
		\hline
	\end{tabular}
	\caption{Steps in the proof of $F_v(2, 2, 2, 5; 7) > 16$}
	\label{table: F_v(2, 2, 2, 5; 7) > 16}
\end{table}

\subsection{Proof of $F_v(2, 2, 2, 2, 5; 8) > 17$}

By checking all 9-vertex graphs we find the maximal graphs in $\mathcal{H}(5; 8; 9)$. Starting from them, by successively applying Algorithm \ref{algorithm: H(2_r, 5; q; n)}($n = 11, 13, 15, 17; q = 8; r = 1, 2, 3, 4$) we obtain the maximal graphs in the sets $\mathcal{H}(2, 5; 8; 11)$, $\mathcal{H}(2, 2, 5; 8; 13)$, $\mathcal{H}(2, 2, 2, 5; 8; 15)$ and $\mathcal{H}(2, 2, 2, 2, 5; 8; 17)$. The results are described in Table \ref{table: F_v(2, 2, 2, 2, 5; 8) > 17}. There we can see that $\mathcal{H}(2, 2, 2, 2, 5; 8; 17) = \emptyset$ and therefore $F_v(2, 2, 2, 2, 5; 8) > 17$.

\begin{table}[h]
	\centering
	\begin{tabular}{| p{3.5cm} | p{2.5cm} | p{5cm} |}
		\hline
		set										& maximal graphs	& graphs in which the addition of an edge forms a new 7-clique	\\
		\hline
		$\mathcal{H}(5; 8; 9)$					& 2					& 13		\\
		$\mathcal{H}(2, 5; 8; 11)$				& 8					& 326		\\
		$\mathcal{H}(2, 2, 5; 8; 13)$			& 56				& 105 138	\\
		$\mathcal{H}(2, 2, 2, 5; 8; 15)$		& 423				& 2 616 723	\\
		$\mathcal{H}(2, 2, 2, 2, 5; 8; 17)$		& 0					& 0			\\
		\hline
	\end{tabular}
	\caption{Steps in the proof of $F_v(2, 2, 2, 2, 5; 8) > 17$}
	\label{table: F_v(2, 2, 2, 2, 5; 8) > 17}
\end{table}

\subsection{Proof of $F_v(2, 2, 2, 2, 2, 5; 9) > 18$}

By checking all 10-vertex graphs we find the maximal graphs in $\mathcal{H}(2, 5; 9; 10)$. Starting from them, by successively applying Algorithm \ref{algorithm: H(2_r, 5; q; n)}($n = 12, 14, 16, 18; q = 9; r = 2, 3, 4, 5$) we obtain the maximal graphs in the sets $\mathcal{H}(2, 2, 5; 9; 12)$, $\mathcal{H}(2, 2, 2, 5; 9; 14)$, $\mathcal{H}(2, 2, 2, 2, 5; 9; 16)$ and $\mathcal{H}(2, 2, 2, 2, 2, 5; 9; 18)$. The results are described in Table \ref{table: F_v(2, 2, 2, 2, 2, 5; 9) > 18}. There we can see that $\mathcal{H}(2, 2, 2, 2, 2, 5; 9; 18) = \emptyset$  and therefore $F_v(2, 2, 2, 2, 2, 5; 9) > 18$.

\begin{table}[h]
	\centering
	\begin{tabular}{| p{3.5cm} | p{2.5cm} | p{5cm} |}
		\hline
		set										& maximal graphs	& graphs in which the addition of an edge forms a new 8-clique	\\
		\hline
		$\mathcal{H}(2, 5; 9; 10)$				& 2					& 13		\\
		$\mathcal{H}(2, 2, 5; 9; 12)$			& 8					& 327		\\
		$\mathcal{H}(2, 2, 2, 5; 9; 14)$		& 56				& 105 294	\\
		$\mathcal{H}(2, 2, 2, 2, 5; 9; 16)$		& 423				& 2 616 741	\\
		$\mathcal{H}(2, 2, 2, 2, 2, 5; 9; 18)$	& 0					& 0			\\
		\hline
	\end{tabular}
	\caption{Steps in the proof of $F_v(2, 2, 2, 2, 2, 5; 9) > 18$}
	\label{table: F_v(2, 2, 2, 2, 2, 5; 9) > 18}
\end{table}

Thus, the proof of Theorem \ref{theorem: r_0(5) = 2} is finished.
\end{proof}

All computations were done on a personal computer. The slowest part was the proof of $F_v(2, 2, 2, 2, 2, 5; 9) > 18$ which took several days to complete.

From Theorem \ref{theorem: r_0(5) = 2} and Theorem \ref{theorem: min_(r geq 2) set(F_v(2_r, p; r + p - 1) - r) = F_v(2_(r_0), p; r_0 + p - 1) - r_0}(b) we obtain:

\begin{corollary}
\label{corollary: F_v(2_r, 5; r + 4) = r + 14}
$F_v(2_r, 5; r + 4) = r + 14, \quad r \geq 2$.
\end{corollary}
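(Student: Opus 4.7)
The corollary follows essentially by combining three results that are already in place in the paper, so the plan is simply to assemble them in the right order. First I would invoke Theorem \ref{theorem: r_0(5) = 2}, which establishes $r_0(5) = 2$. Second, I would apply part (b) of Theorem \ref{theorem: min_(r geq 2) set(F_v(2_r, p; r + p - 1) - r) = F_v(2_(r_0), p; r_0 + p - 1) - r_0} with $p = 5$: since $r_0(5) = 2$, this gives
\begin{equation*}
F_v(2_r, 5; r + 4) = F_v(2, 2, 5; 6) + r - 2, \quad r \geq 2.
\end{equation*}
Third, I would substitute the value $F_v(2, 2, 5; 6) = 16$ from Theorem \ref{theorem: F_v(2, 2, 5; 6) = 16}, which yields $F_v(2_r, 5; r + 4) = 16 + r - 2 = r + 14$, as claimed.

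There is no real obstacle here: the entire content of the corollary has been bundled into the preceding two theorems, and the corollary itself is simply the arithmetic specialization of Theorem \ref{theorem: min_(r geq 2) set(F_v(2_r, p; r + p - 1) - r) = F_v(2_(r_0), p; r_0 + p - 1) - r_0}(b) at $p = 5$. The heavy lifting has all been done previously, namely the computer-assisted proof that $F_v(2, 2, 5; 6) = 16$ in Section 3 and the three computer-assisted inequalities $F_v(2, 2, 2, 5; 7) > 16$, $F_v(2, 2, 2, 2, 5; 8) > 17$, $F_v(2, 2, 2, 2, 2, 5; 9) > 18$ used to conclude $r_0(5) = 2$ via the bound $r_0(5) \leq 5$ from Theorem \ref{theorem: min_(r geq 2) set(F_v(2_r, p; r + p - 1) - r) = F_v(2_(r_0), p; r_0 + p - 1) - r_0}(d). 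Accordingly the write-up of the corollary will consist of a single short paragraph citing these two results and performing the substitution.
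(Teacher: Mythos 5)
Your proposal is correct and matches the paper's own derivation exactly: the paper obtains the corollary from Theorem \ref{theorem: r_0(5) = 2} together with Theorem \ref{theorem: min_(r geq 2) set(F_v(2_r, p; r + p - 1) - r) = F_v(2_(r_0), p; r_0 + p - 1) - r_0}(b), and then substitutes $F_v(2, 2, 5; 6) = 16$ from Theorem \ref{theorem: F_v(2, 2, 5; 6) = 16}. Nothing further is needed.
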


\section{Computation of the numbers $\wFv{m}{5}{m - 1}$ and proof of Theorem \ref{theorem: F_v(a_1, ..., a_s; m - 1) = m + 9, max set(a_1, ..., a_s) = 5}}

Let us remind that $\wH{m}{p}{q}$ and $\wFv{m}{p}{q}$ are defined in Section 4.\\

We need the following

\begin{lemma}
\label{lemma: K_(m - m_0) + G overset(v)(rightarrow) uni(m)(p)}
\cite{Nen02}
Let $m_0$ and $p$ be positive integers and $G \overset{v}{\rightarrow} \uni{m_0}{p}$. Then for every positive integer $m \geq m_0$ it is true that $K_{m - m_0} + G \overset{v}{\rightarrow} \uni{m}{p}$.
\end{lemma}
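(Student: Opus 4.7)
The plan is a direct vertex-coloring argument that pushes the coloring problem from $K_{m-m_0}+G$ down to $G$ by rescaling the targets. Fix parameters $a_1,\dots,a_s$ with $m=\sum_{i=1}^s(a_i-1)+1$ and $\max a_i\le p$, and fix an arbitrary $s$-coloring of $V(K_{m-m_0}+G)$. Let $k_i$ be the number of vertices of the $K_{m-m_0}$ block receiving color $i$, so $\sum_{i=1}^s k_i=m-m_0$, and let $W_i\subseteq V(G)$ be the color-$i$ vertices of $G$.

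I would first dispose of the trivial case: if $k_i\ge a_i$ for some $i$, then those $k_i$ color-$i$ vertices of $K_{m-m_0}$ already form a monochromatic $a_i$-clique, and we are done. So assume $k_i\le a_i-1$ for every $i$, and set $b_i:=a_i-k_i$. Each $b_i$ is then a positive integer, $b_i\le a_i\le p$, and
\begin{equation*}
\sum_{i=1}^s(b_i-1)+1=\sum_{i=1}^s(a_i-1)-\sum_{i=1}^s k_i+1=(m-1)-(m-m_0)+1=m_0.
\end{equation*}
Hence $(b_1,\dots,b_s)$ is an admissible tuple for the relation $G\overset{v}{\rightarrow}\uni{m_0}{p}$, and applying that hypothesis to the induced coloring $V(G)=W_1\cup\cdots\cup W_s$ yields an index $i$ and a $b_i$-clique $B\subseteq W_i$. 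Because every vertex of $V(K_{m-m_0})$ is adjacent to every vertex of $V(G)$ in the join, $B$ together with the $k_i$ color-$i$ vertices of $K_{m-m_0}$ forms a monochromatic clique of size $b_i+k_i=a_i$ in color $i$, as required.

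There is essentially no obstacle; the main point of care is verifying that the reduced tuple $(b_1,\dots,b_s)$ is still admissible for $\uni{m_0}{p}$, which boils down to the two arithmetic checks $b_i\ge 1$ and $\sum(b_i-1)+1=m_0$ carried out above. The only subtlety to flag is that some $b_i$ may equal $1$ (precisely when $k_i=a_i-1$), which is permitted because $\uni{m_0}{p}$ only requires positive integers bounded by $p$. The boundary case $m=m_0$ is trivial, since then $K_{m-m_0}$ is empty and the conclusion reduces to the hypothesis.
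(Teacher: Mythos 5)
Your proof is correct. Note, however, that the paper does not actually prove this lemma: it only remarks that the statement is established by induction on $m \geq m_0$ in the cited reference (as Lemma 3 there). Your argument is the direct, ``unrolled'' version of that induction: instead of peeling off one vertex of the complete block at a time (which at each step replaces some $a_i$ by $a_i - 1$ and reduces $m$ by one), you absorb the whole block at once by passing from $(a_1,\dots,a_s)$ to $(b_1,\dots,b_s)$ with $b_i = a_i - k_i$. The two key checks --- that each $b_i$ remains a positive integer at most $p$ and that $\sum_i (b_i-1)+1 = m_0$ --- are exactly the invariants the induction would maintain, and your handling of the degenerate cases ($k_i \geq a_i$ giving a clique inside the complete block; $b_i = 1$ being an admissible target since the definition of $G \overset{v}{\rightarrow} \uni{m_0}{p}$ allows entries equal to $1$; and $m = m_0$ being trivial) is complete. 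The direct route buys a self-contained one-paragraph proof where the paper relies on an external reference; the inductive route keeps each step minimal (only one vertex and one parameter change at a time) at the cost of a slightly longer bookkeeping argument.
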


This lemma is formulated in an obviously equivalent way and is proved by induction with respect to $m \geq m_0$ in \cite{Nen02} as Lemma 3.

\begin{theorem}
\label{theorem: wFv(m)(p)(m - m_0 + q) leq wFv(m_0)(p)(q) + m - m_0}
Let $m$, $m_0$, $p$ and $q$ be positive integers, $m \geq m_0$ and $q > \min\set{m_0, p}$. Then
\begin{equation*}
\wFv{m}{p}{m - m_0 + q} \leq \wFv{m_0}{p}{q} + m - m_0.
\end{equation*}
\end{theorem}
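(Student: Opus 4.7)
The plan is to exhibit a small witness graph in $\wH{m}{p}{m - m_0 + q}$ by joining a clique to an extremal graph for $\wFv{m_0}{p}{q}$, mimicking the join construction used in the proof of Lemma~\ref{lemma: F_v(2_r, p; r + p - 1) leq F_v(2_s, p; s + p - 1) + r - s}.

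Concretely, first I would invoke Proposition~\ref{proposition: wFv(m)(p)(q) exists} under the hypothesis $q > \min\{m_0, p\}$ to secure that $\wFv{m_0}{p}{q}$ is defined, and fix an extremal graph $G_0 \in \wH{m_0}{p}{q}$ so that $|\V(G_0)| = \wFv{m_0}{p}{q}$ and $\omega(G_0) < q$. Then I would form $G := K_{m - m_0} + G_0$, which reduces to $G_0$ in the boundary case $m = m_0$. Applying Lemma~\ref{lemma: K_(m - m_0) + G overset(v)(rightarrow) uni(m)(p)} to $G_0$ with parameter $m \geq m_0$ yields $G \overset{v}{\rightarrow} \uni{m}{p}$, while a one-line clique-number computation for a join gives
$$\omega(G) = (m - m_0) + \omega(G_0) < (m - m_0) + q,$$
so $G \in \wH{m}{p}{m - m_0 + q}$. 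Consequently
$$\wFv{m}{p}{m - m_0 + q} \leq |\V(G)| = (m - m_0) + \wFv{m_0}{p}{q},$$
which is the stated inequality.

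Because the arrow-preservation Lemma~\ref{lemma: K_(m - m_0) + G overset(v)(rightarrow) uni(m)(p)} does all the heavy lifting, I foresee no real obstacle. The only sanity check is that the left-hand quantity is well defined at all, but this is automatic from the construction itself: the exhibited graph $G$ shows that $\wH{m}{p}{m - m_0 + q}$ is non-empty, which is consistent with (and in fact implies) the condition $m - m_0 + q > \min\{m, p\}$ of Proposition~\ref{proposition: wFv(m)(p)(q) exists}.
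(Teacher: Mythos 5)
Your proposal is correct and follows essentially the same route as the paper: take an extremal graph $G_0 \in \wH{m_0}{p}{q}$, form $K_{m - m_0} + G_0$, apply Lemma~\ref{lemma: K_(m - m_0) + G overset(v)(rightarrow) uni(m)(p)} for the arrowing property, and bound the clique number of the join. The extra remarks on well-definedness are harmless additions to what is otherwise the paper's own argument.
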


\begin{proof}
Let $G_0 \in \wH{m_0}{p}{q}$, $\abs{V(G_0)} = \wFv{m_0}{p}{q}$ and $G = K_{m - m_0} + G_0$. According to Lemma \ref{lemma: K_(m - m_0) + G overset(v)(rightarrow) uni(m)(p)}, $G \overset{v}{\rightarrow} \uni{m}{p}$. Since $\omega(G) = m - m_0 + \omega(G_0) < m - m_0 + q$, it follows that $G \in \wH{m}{p}{m - m_0 + q}$. Therefore, $\wFv{m}{p}{m - m_0 + q} \leq \abs{\V(G)} = \wFv{m_0}{p}{q} + m - m_0$.
\end{proof}

The following obvious proposition will be used in the proof of Theorem \ref{theorem: wFv(m)(5)(m - 1) = ...}.
\begin{proposition}
\label{proposition: G overset(v)(rightarrow) (a_1, ..., a_(i - 1), k, a_i - k + 1, a_(i + 1), ..., a_s)}
Let $a_1, ..., a_s$ be positive integers, $a_i \geq k$ and $G \overset{v}{\rightarrow} (a_1, ..., a_s)$. Then
\begin{equation*}
G \overset{v}{\rightarrow} (a_1, ..., a_{i - 1}, k, a_i - k + 1, a_{i + 1}, ..., a_s).
\end{equation*}
\end{proposition}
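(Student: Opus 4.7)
The proposition asserts a refinement operation on the arrow relation: one is allowed to split the $i$-th slot $a_i$ into two consecutive slots of sizes $k$ and $a_i - k + 1$. The plan is a one-step pigeonhole argument on an arbitrary $(s+1)$-coloring, reducing it to the hypothesized $s$-colorable arrow relation by merging the two new color classes.

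First I would fix an arbitrary $(s+1)$-coloring of $\V(G)$, writing it as
$$\V(G) = V_1 \cup \dots \cup V_{i-1} \cup U \cup W \cup V_{i+1} \cup \dots \cup V_s,$$
where $U$ is the color class intended for the slot of size $k$ and $W$ the class intended for the slot of size $a_i - k + 1$. Merging them, I obtain an $s$-coloring
$$\V(G) = V_1 \cup \dots \cup V_{i-1} \cup (U \cup W) \cup V_{i+1} \cup \dots \cup V_s,$$
to which the hypothesis $G \overset{v}{\rightarrow} (a_1, \dots, a_s)$ applies. Thus either some $V_j$ with $j \neq i$ contains an $a_j$-clique — which is already a monochromatic clique of the required size in the original $(s+1)$-coloring — or $U \cup W$ contains an $a_i$-clique $C$.

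In the remaining case I would split $C = (C \cap U) \cup (C \cap W)$. If both $|C \cap U| \leq k - 1$ and $|C \cap W| \leq a_i - k$ held, then $|C| \leq (k-1) + (a_i - k) = a_i - 1$, contradicting $|C| = a_i$; hence $U$ contains a $k$-clique or $W$ contains an $(a_i - k + 1)$-clique, which is exactly a monochromatic clique of the required size in one of the two new color classes. This verifies $G \overset{v}{\rightarrow} (a_1, \dots, a_{i-1}, k, a_i - k + 1, a_{i+1}, \dots, a_s)$. There is no real obstacle here — the paper itself labels the proposition as obvious — and the only things to track carefully are the identification of the two new color classes with $U$ and $W$ and the elementary pigeonhole bound just used; the hypothesis $a_i \geq k$ enters merely to ensure that $a_i - k + 1$ is a positive integer so that the claimed arrow relation is well-defined.
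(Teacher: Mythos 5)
Your proof is correct and is precisely the merge-the-two-classes-and-split-the-resulting-clique argument that the paper has in mind when it labels the proposition as obvious (the paper gives no written proof). The pigeonhole step $|C\cap U|\le k-1$ and $|C\cap W|\le a_i-k$ forcing $|C|\le a_i-1$ is exactly right, and your remark on the role of $a_i\ge k$ is accurate.
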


According to proposition \ref{proposition: wFv(m)(p)(q) exists}, we have
\begin{equation}
\label{equation: wFv(m)(5)(m - 1) exists}
\wFv{m}{5}{m - 1} \mbox{ exists } \Leftrightarrow m \geq 7.
\end{equation}
We prove the following
\begin{theorem}
\label{theorem: wFv(m)(5)(m - 1) = ...}
The following equalities are true:

\begin{equation*}
\wFv{m}{5}{m - 1} = 
\begin{cases}
17, & \emph{if $m = 7$}\\
m + 9, & \emph{if $m \geq 8$}.
\end{cases}
\end{equation*}

\end{theorem}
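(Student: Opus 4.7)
The plan is to handle the lower and upper bounds separately, with the case $m = 7$ requiring an additional computational argument on top of the general chain of inequalities already set up in Sections 4--6.

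\emph{Lower bound.} For any $m \geq 7$ I would combine Theorem \ref{theorem: F_v(2_(m - p), p; q) leq F_v(a_1, ..., a_s; q) leq wFv(m)(p)(q)} with Corollary \ref{corollary: F_v(2_r, 5; r + 4) = r + 14} to get
\begin{equation*}
\wFv{m}{5}{m - 1} \geq F_v(2_{m - 5}, 5; m - 1) = (m - 5) + 14 = m + 9,
\end{equation*}
which already settles $m \geq 8$. For $m = 7$ this only gives $16$ and must be sharpened to $17$. Suppose for contradiction that some 16-vertex graph $G$ lies in $\wH{7}{5}{6}$. Then $G \overset{v}{\rightarrow} (2, 2, 5)$ and $\omega(G) < 6$, so $G$ is one of the $147$ graphs enumerated in Theorem \ref{theorem: abs(mathcal(H)(2, 2, 5; 6; 16)) = 147}. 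A direct enumeration shows that every partition of $6$ into parts $\leq 4$ (i.e., every tuple in $\uni{7}{5}$) refines $\{4, 2\}$ or $\{3, 3\}$, so by Proposition \ref{proposition: G overset(v)(rightarrow) (a_1, ..., a_(i - 1), k, a_i - k + 1, a_(i + 1), ..., a_s)} the condition $G \overset{v}{\rightarrow} \uni{7}{5}$ is equivalent to $G \overset{v}{\rightarrow} (5, 3)$ and $G \overset{v}{\rightarrow} (4, 4)$. By Theorem \ref{theorem: abs(mathcal(H)(3, 5; 6; 16)) = 4} the only candidates arrowing $(5, 3)$ are $G_{50}, G_{51}, G_{81}, G_{146}$, and a direct computer check shows that none of these four arrows $(4, 4)$, contradicting the assumption.

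\emph{Upper bound.} The key step is $\wFv{8}{5}{7} \leq 17$. Every partition of $7$ with parts $\leq 4$ refines $\{4, 3\}$, so by Proposition \ref{proposition: G overset(v)(rightarrow) (a_1, ..., a_(i - 1), k, a_i - k + 1, a_(i + 1), ..., a_s)} it suffices to produce a 17-vertex graph $G$ with $\omega(G) < 7$ and $G \overset{v}{\rightarrow} (5, 4)$, that is, $F_v(5, 4; 7) \leq 17$. Such a graph is obtained by an adaptation of Algorithm \ref{algorithm: H(2_r, 5; q; n)} applied to $\mathcal{H}(4, 5; 7; n)$, starting from the already-computed $\mathcal{H}(5; 7; 10)$ and successively adding independent vertices as in the proof of Theorem \ref{theorem: r_0(5) = 2}; certifying that the set is nonempty at $n = 17$ gives the desired witness. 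Given $\wFv{8}{5}{7} \leq 17$, Theorem \ref{theorem: wFv(m)(p)(m - m_0 + q) leq wFv(m_0)(p)(q) + m - m_0} with $m_0 = 8$ and $q = 7$ immediately yields
\begin{equation*}
\wFv{m}{5}{m - 1} \leq \wFv{8}{5}{7} + m - 8 \leq m + 9
\end{equation*}
for every $m \geq 8$. The remaining inequality $\wFv{7}{5}{6} \leq 17$ is handled by a direct computational construction of a 17-vertex graph in $\wH{7}{5}{6}$; by the same refinement observation, it suffices to exhibit a 17-vertex graph with $\omega < 6$ that arrows both $(5, 3)$ and $(4, 4)$, which can be produced by extending an appropriately chosen graph from $\mathcal{H}(2, 2, 5; 6; 16)$ by one vertex whose neighborhood forces $(4, 4)$-arrowing while preserving $\omega < 6$.

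\emph{Main obstacle.} The central difficulty is the explicit construction of a 17-vertex witness for $\wFv{8}{5}{7} \leq 17$. Because Theorem \ref{theorem: F_v(a_1, ..., a_s; m - 1) = m + 9, max set(a_1, ..., a_s) = 5} depends on the present theorem, one cannot invoke the formula $F_v(5, 4; 7) = 17$ as a known input; the witness must be produced and verified independently by computer. Once this single graph (and the analogous one for $m = 7$) is in hand, all remaining steps are routine applications of Theorems \ref{theorem: F_v(2_(m - p), p; q) leq F_v(a_1, ..., a_s; q) leq wFv(m)(p)(q)} and \ref{theorem: wFv(m)(p)(m - m_0 + q) leq wFv(m_0)(p)(q) + m - m_0} together with Corollary \ref{corollary: F_v(2_r, 5; r + 4) = r + 14}.
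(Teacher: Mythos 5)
Your argument follows the paper's proof almost step for step: the same lower bounds obtained from Theorem \ref{theorem: F_v(2_(m - p), p; q) leq F_v(a_1, ..., a_s; q) leq wFv(m)(p)(q)} together with Theorem \ref{theorem: F_v(2, 2, 5; 6) = 16} and Corollary \ref{corollary: F_v(2_r, 5; r + 4) = r + 14}; the same sharpening to $17$ for $m = 7$ by checking that none of the four graphs of Theorem \ref{theorem: abs(mathcal(H)(3, 5; 6; 16)) = 4} arrows $(4,4)$; the same reduction of $G \overset{v}{\rightarrow} \uni{7}{5}$ (resp. $\uni{8}{5}$) to arrowing $(3,5)$ and $(4,4)$ (resp. $(4,5)$) via repeated application of Proposition \ref{proposition: G overset(v)(rightarrow) (a_1, ..., a_(i - 1), k, a_i - k + 1, a_(i + 1), ..., a_s)}; the same one-vertex extension of graphs in $\mathcal{H}(2,2,5;6;16)$ to get the $m = 7$ witness $\Gamma_1$; and the same use of Theorem \ref{theorem: wFv(m)(p)(m - m_0 + q) leq wFv(m_0)(p)(q) + m - m_0} with $m_0 = 8$, $q = 7$ to settle all $m > 8$.

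The one place where you deviate is also the one place that does not work as stated: producing the $17$-vertex witness for $\wFv{8}{5}{7} \leq 17$ by ``an adaptation of Algorithm \ref{algorithm: H(2_r, 5; q; n)} applied to $\mathcal{H}(4,5;7;n)$, starting from $\mathcal{H}(5;7;10)$ and successively adding independent vertices.'' The correctness of that generation scheme rests on Proposition \ref{proposition: forming all maximal graphs in mathcal(H)(2_r, p; q; n)}(a): deleting an independent set from a graph arrowing $(2_r, p)$ leaves a graph arrowing $(2_{r-1}, p)$, precisely because the deleted vertices can be assigned the colour whose target clique is a $K_2$. In $\mathcal{H}(4,5;7)$ there is no such colour class, so deleting independent vertices from a maximal graph in $\mathcal{H}(4,5;7;17)$ need not land in any previously computed family, and the inductive ``add independent vertices'' strategy breaks down. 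The paper instead obtains $\Gamma_2$ top-down: it takes the known $18$-vertex graph $\Gamma_3$ of \cite{SXP09}, verifies $\Gamma_3 \in \mathcal{H}(4,5;7;18)$, enlarges the pool of maximal graphs in $\mathcal{H}(4,5;7;18)$ by Procedure \ref{procedure: populate}, and deletes one vertex. So you have correctly isolated the main obstacle and every surrounding reduction, but the concrete computational route you propose for that obstacle would have to be replaced by something like the paper's Appendix A construction.
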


\begin{proof}
\emph{Case 1.} $m = 7$. According to Theorem \ref{theorem: F_v(2_(m - p), p; q) leq F_v(a_1, ..., a_s; q) leq wFv(m)(p)(q)} and Theorem \ref{theorem: F_v(2, 2, 5; 6) = 16}, $\wFv{7}{5}{6} \geq F_v(2, 2, 5; 6) = 16$. With the help of the computer we check that none of the 4 graphs in $\mathcal{H}(3, 5; 6; 16)$ (see Figure \ref{figure: H(3, 5; 6; 16)}) belongs to $\mathcal{H}(4, 4; 6; 16)$. Therefore, $\wFv{7}{5}{6} \geq 17$.

By adding one vertex to the graphs from $\mathcal{H}(2, 2, 5; 6; 16)$ we obtain 17 vertex graphs, 343 of which belong to both $\mathcal{H}(3, 5; 6; 17)$ and $\mathcal{H}(4, 4; 6; 17)$. The graph $\Gamma_1$ given on Figure \ref{figure: H(3, 5; 6; 17) cap H(4, 4; 6; 17)} is one of these graphs (it is the only one with independence number 4). We will prove that $\Gamma_1 \in \wH{7}{5}{6}$. Since $\omega(\Gamma_1) = 5$ it remains to be proved that if $2 \leq b_1 \leq ... \leq b_s \leq 5$ are positive integers, such that $\sum_{i = 1}^s (b_1 - 1) + 1 = 7$, then $\Gamma_1 \overset{v}{\rightarrow} (b_1, ..., b_s)$. The following cases are possible:

$s = 2, b_1 = 3, b_2 = 5$

$s = 2, b_1 = b_2 = 4$

$s = 3, b_1 = b_2 = 2, b_3 = 5$

$s = 3, b_1 = 2, b_2 = 3, b_3 = 4$

$s = 3, b_1 = b_2 = b_3 = 3$

$s = 4, b_1 = b_2 = b_3 = 2, b_4 = 4$

$s = 4, b_1 = b_2 = 2, b_3 = b_4 = 3$

$s = 5, b_1 = b_2 = b_3 = b_4 = 2, b_5 = 3$

$s = 6, b_1 = b_2 = b_3 = b_4 = b_5 = b_6 = 2$

By construction, $\Gamma_1 \overset{v}{\rightarrow} (3, 5)$ and $\Gamma_1 \overset{v}{\rightarrow} (4, 4)$. From Proposition \ref{proposition: G overset(v)(rightarrow) (a_1, ..., a_(i - 1), k, a_i - k + 1, a_(i + 1), ..., a_s)} and $\Gamma_1 \overset{v}{\rightarrow} (3, 5)$ it follows $\Gamma_1 \overset{v}{\rightarrow} (2, 2, 5)$, $\Gamma_1 \overset{v}{\rightarrow} (2, 3, 4)$ and $\Gamma_1 \overset{v}{\rightarrow} (3, 3, 3)$. Consequently, we have

$\Gamma_1 \overset{v}{\rightarrow} (3, 3, 3) \Rightarrow \Gamma_1 \overset{v}{\rightarrow} (2, 2, 3, 3)$

$\Gamma_1 \overset{v}{\rightarrow} (2, 2, 5) \Rightarrow \Gamma_1 \overset{v}{\rightarrow} (2, 2, 2, 4)$

$\Gamma_1 \overset{v}{\rightarrow} (2, 2, 2, 4) \Rightarrow \Gamma_1 \overset{v}{\rightarrow} (2, 2, 2, 2, 3)$

$\Gamma_1 \overset{v}{\rightarrow} (2, 2, 2, 2, 3) \Rightarrow \Gamma_1 \overset{v}{\rightarrow} (2, 2, 2, 2, 2, 2)$

We proved that $\Gamma_1 \in \wH{7}{5}{6}$. Therefore, $\wFv{7}{5}{6} \leq \abs{\V(\Gamma_1)} = 17$.

\begin{figure}
	\centering
	\begin{subfigure}[b]{0.5\textwidth}
		\centering
		\includegraphics[height=240px,width=120px]{./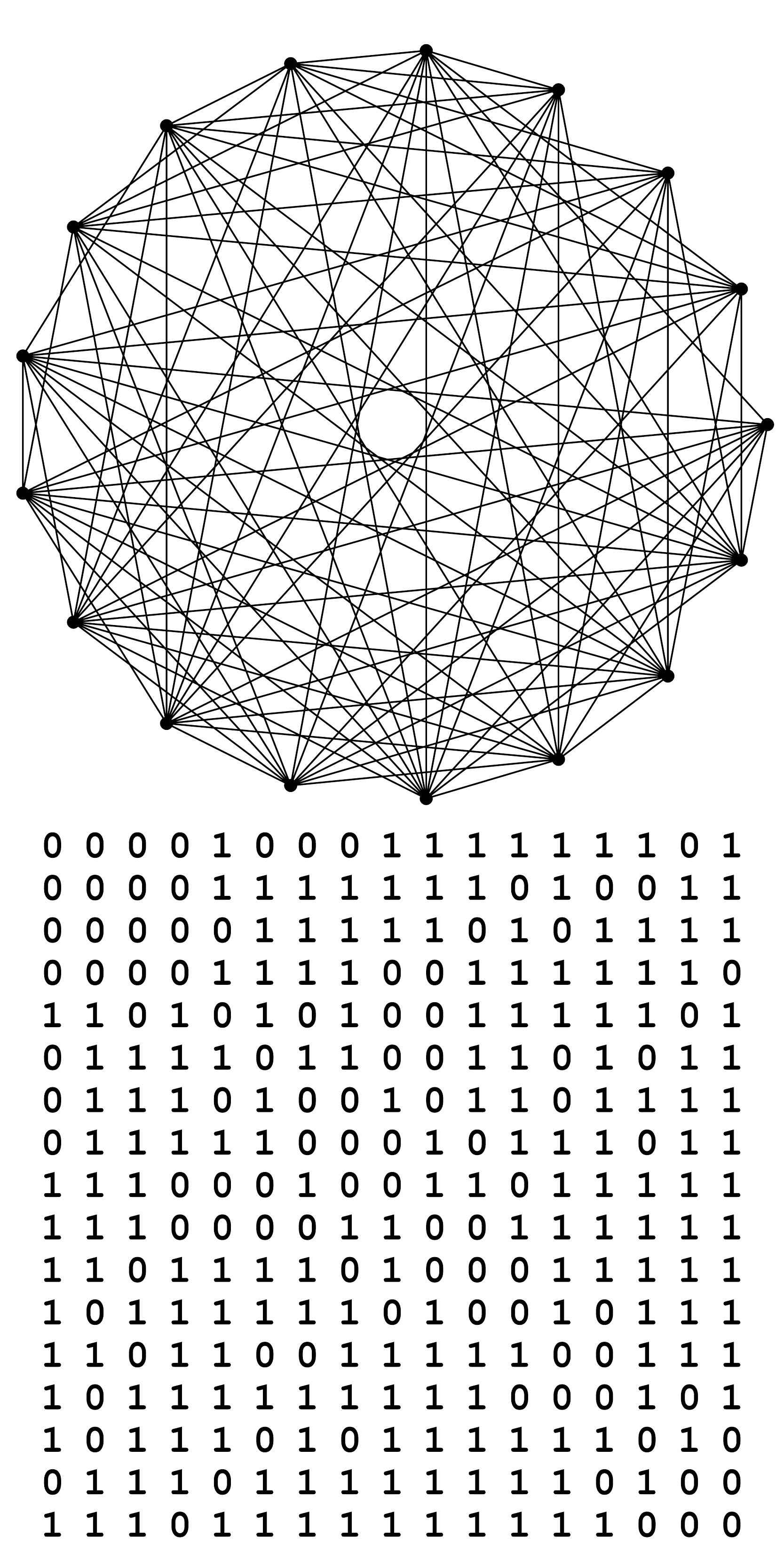}
		\caption*{\emph{$\Gamma_1$}}
		\label{figure: Gamma_1}
	\end{subfigure}
	\caption{Graph $\Gamma_1$, 17-vertex graph in $\mathcal{H}(3, 5; 6; 17) \cap \mathcal{H}(4, 4; 6; 17)$}
	\label{figure: H(3, 5; 6; 17) cap H(4, 4; 6; 17)}
\end{figure}

\emph{Case 2.} $m = 8$. According to Theorem \ref{theorem: F_v(2_(m - p), p; q) leq F_v(a_1, ..., a_s; q) leq wFv(m)(p)(q)} and Corollary \ref{corollary: F_v(2_r, 5; r + 4) = r + 14} $\wFv{8}{5}{7} \geq F_v(2, 2, 2, 5; 7) = 17$. To prove the upper bound, consider the 17-vertex graph $\Gamma_2 \in \mathcal{H}(4, 5; 7; 17)$ which is shown on Figure \ref{figure: H(4, 5; 7; 17)}. Appendix A describes the method to obtain this graph. By construction $\omega(\Gamma_2) = 6$ and $\Gamma_2 \overset{v}{\rightarrow} (4, 5)$. As in Case 1., we prove that from $\Gamma_2 \overset{v}{\rightarrow} (4, 5)$ it follows $\Gamma_2 \overset{v}{\rightarrow} \uni{8}{5}$. Therefore, $\Gamma_2 \in \wH{8}{5}{7}$ and $\wFv{8}{5}{7} \leq \abs{\V(\Gamma_2)} = 17$. 

\begin{figure}
	\centering
	\begin{subfigure}[b]{0.5\textwidth}
		\centering
		\includegraphics[height=240px,width=120px]{./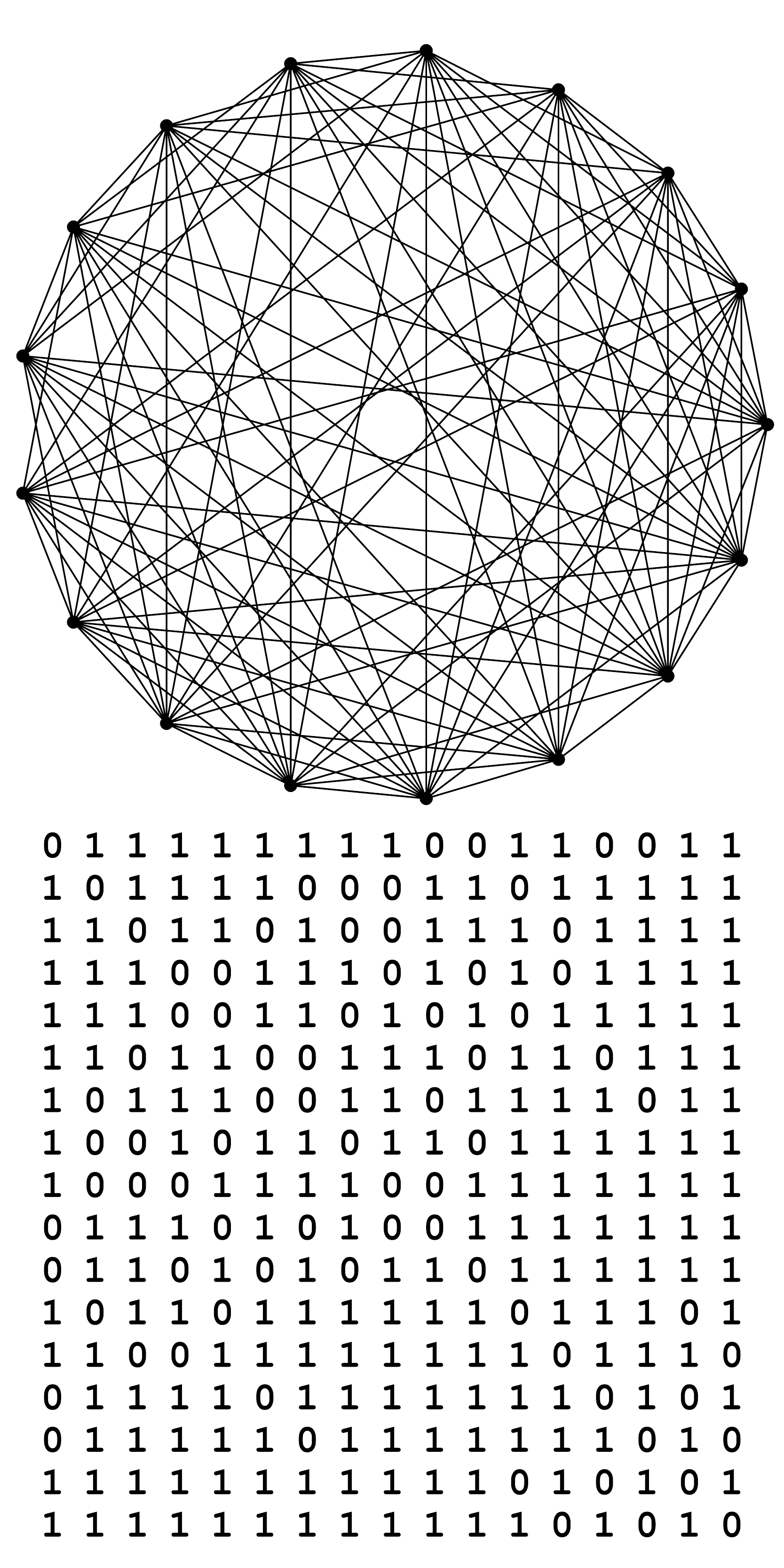}
		\caption*{\emph{$\Gamma_2$}}
		\label{figure: Gamma_2}
	\end{subfigure}
	\caption{Graph $\Gamma_2$, 17-vertex graph in $\mathcal{H}(4, 5; 7; 17)$}
	\label{figure: H(4, 5; 7; 17)}
\end{figure}

\emph{Case 3.} $m > 8$. From Theorem \ref{theorem: F_v(2_(m - p), p; q) leq F_v(a_1, ..., a_s; q) leq wFv(m)(p)(q)} and Corollary \ref{corollary: F_v(2_r, 5; r + 4) = r + 14} it follows $\wFv{m}{5}{m - 1} \geq m + 9$. From Theorem \ref{theorem: wFv(m)(p)(m - m_0 + q) leq wFv(m_0)(p)(q) + m - m_0}($m_0 = 8, p = 5, q = 7$) and $\wFv{8}{5}{7} = 17$ it follows $\wFv{m}{5}{m - 1} \leq m + 9$.
\end{proof}

\subsection*{Proof of Theorem \ref{theorem: F_v(a_1, ..., a_s; m - 1) = m + 9, max set(a_1, ..., a_s) = 5}}

Since $m \geq 7$, only the following two cases are possible:\\

\emph{Case 1.} $m = 7$. In this case $F_v(2, 2, 5; 6)$ and $F_v(3, 5; 6)$ are the only canonical vertex Folkman numbers of the form $F_v(a_1, ..., a_s; m - 1)$. The equality $F_v(2, 2, 5; 6) = 16$ is proved in this work as Theorem \ref{theorem: F_v(2, 2, 5; 6) = 16}, and the equality $F_v(3, 5; 6) = 16$ is proved in \cite{SLPX12} (see also Corollary \ref{corollary: F_v(3, 5; 6) = 16}).

\emph{Case 2.} $m \geq 8$. In this case Theorem \ref{theorem: F_v(a_1, ..., a_s; m - 1) = m + 9, max set(a_1, ..., a_s) = 5} follows easily from Theorem \ref{theorem: wFv(m)(5)(m - 1) = ...}, Theorem \ref{theorem: F_v(2_(m - p), p; q) leq F_v(a_1, ..., a_s; q) leq wFv(m)(p)(q)}($q = m - 1$) and Corollary \ref{corollary: F_v(2_r, 5; r + 4) = r + 14}.

\section{Proof of Theorem \ref{theorem: m + 9 leq F_v(a_1, ..., a_s; m - 1) leq m + 10, max set(a_1, ..., a_s) = 6}}

According to Corollary \ref{corollary: F_v(2_r, 5; r + 4) = r + 14}, $F_v(2, 2, 2, 5; 7) = 17$. From Proposition \ref{proposition: G overset(v)(rightarrow) (a_1, ..., a_(i - 1), k, a_i - k + 1, a_(i + 1), ..., a_s)} it follows $F_v(2, 2, 6; 7) \geq F_v(2, 2, 2, 5; 7)$. Therefore, $F_v(2, 2, 6; 7) \geq 17$. Now from Theorem \ref{theorem: F_v(a_1, ..., a_s; m - 1) geq m + p + 3} (p = 6) we obtain the lower bound

$F_v(a_1, ..., a_s; m - 1) \geq m + 9, \quad m \geq 8$.\\
To prove the upper bound consider the 18 vertex graph $\Gamma_3$ (Figure \ref{figure: H(3, 6; 7; 18) cap H(4, 5; 7; 18)}) with the help of which in \cite{SXP09} they prove the inequality $F_v(3, 6; 7) \leq 18$. In addition to the property $\Gamma_3 \overset{v}{\rightarrow} (3, 6)$ the graph $\Gamma_3$ also has the property $\Gamma_3 \overset{v}{\rightarrow} (4, 5)$. By repeating the arguments in the proof of Theorem \ref{theorem: wFv(m)(5)(m - 1) = ...}, Case 1. we see that from $\Gamma_3 \overset{v}{\rightarrow} (3, 6)$ and $\Gamma_3 \overset{v}{\rightarrow} (4, 5)$ it follows $\Gamma_3 \overset{v}{\rightarrow} \uni{8}{6}$. Since $\omega(\Gamma_3) = 6$, we obtain $\Gamma_3 \in \wH{8}{6}{7}$ and $\wFv{8}{6}{7} \leq \abs{\V(\Gamma_3)} = 18$. From this inequality and Theorem \ref{theorem: wFv(m)(p)(m - m_0 + q) leq wFv(m_0)(p)(q) + m - m_0} ($m_0 = 8, p = 6; q = 7$) it follows $\wFv{m}{6}{m - 1} \leq m + 10, \ m \geq 8$. At last, according to Theorem \ref{theorem: F_v(2_(m - p), p; q) leq F_v(a_1, ..., a_s; q) leq wFv(m)(p)(q)}

$F_v(a_1, ..., a_s; m - 1) \leq \wFv{m}{6}{m - 1} \leq m + 10, \quad m \geq 8$.\\
which finishes the proof of Theorem \ref{theorem: m + 9 leq F_v(a_1, ..., a_s; m - 1) leq m + 10, max set(a_1, ..., a_s) = 6}

\begin{figure}
	\centering
	\begin{subfigure}[b]{0.5\textwidth}
		\centering
		\includegraphics[height=240px,width=120px]{./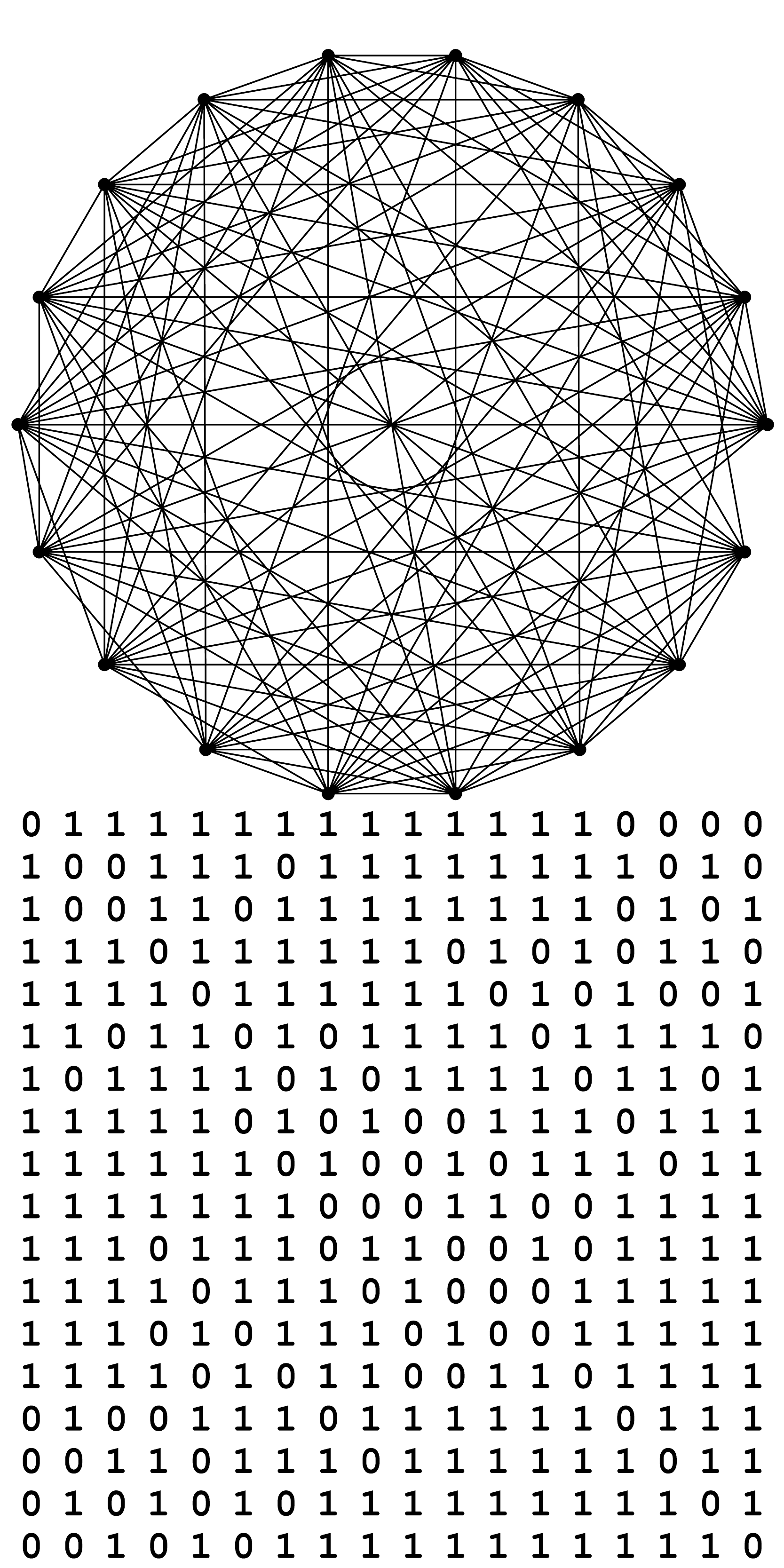}
		\caption*{\emph{$\Gamma_3$}}
		\label{figure: Gamma_3}
	\end{subfigure}
	\caption{Graph $\Gamma_3$, 18-vertex graph in $\mathcal{H}(3, 6; 7; 18)$ $\cap$ $\mathcal{H}(4, 5; 7; 18)$}
	\label{figure: H(3, 6; 7; 18) cap H(4, 5; 7; 18)}
\end{figure}

\appendix

\section{Obtaining the graph $\Gamma_2 \in \mathcal{H}(4, 5; 7; 17)$}

Consider the 18-vertex graph $\Gamma_3$ (Figure \ref{figure: H(3, 6; 7; 18) cap H(4, 5; 7; 18)}). As mentioned, this is the graph with the help of which in \cite{SXP09} they prove the inequality $F_v(3, 6; 7) \leq 18$. With the help of the computer we check that $\Gamma_3$ is maximal in $\mathcal{H}(4, 5; 7; 18)$. We use the following procedure to obtain other maximal graphs in $\mathcal{H}(4, 5; 7; 18)$.

\begin{procedure}
	\label{procedure: populate}
	Extending a set of maximal graphs in $\mathcal{H}(a_1, ..., a_s; q; n)$.
	
	1. Let $\mathcal{A}$ be a set of maximal graphs in $\mathcal{H}(a_1, ..., a_s; q; n)$.
	
	2. By removing edges from the graphs in $\mathcal{A}$, find all their subgraphs which are in $\mathcal{H}(a_1, ..., a_s; q; n)$. This way a set of non-maximal graphs in $\mathcal{H}(a_1, ..., a_s; q; n)$ is obtained.
	
	3. Add edges to the non-maximal graphs to find all their supergraphs which are maximal in $\mathcal{H}(a_1, ..., a_s; q; n)$. Extend the set $\mathcal{A}$ by adding the new maximal graphs.
\end{procedure}

Starting from a set containing a single element the graph $\Gamma_3$ and executing Procedure \ref{procedure: populate} we find 12 new maximal graphs  $\mathcal{H}(4, 5; 7; 18)$. Again, we execute Procedure \ref{procedure: populate} on the new set to find 110 more maximal graphs in $\mathcal{H}(4, 5; 7; 18)$. By removing one vertex from these graphs we obtain 17-vertex graphs, one of which is $\Gamma_2 \in \mathcal{H}(4, 5; 7; 17)$ shown on Figure \ref{figure: H(4, 5; 7; 17)}.

% -----------------------------------------------------------

\section*{Acknowledgements}
This work was partially supported by the Sofia University Scientific Research Fund through Contract 144/2015.

% -----------------------------------------------------------

\bibliographystyle{plain}

\bibliography{main}

\begin{thebibliography}{10}

\bibitem{Chv79}
V.~Chv{\'a}tal.
\newblock The minimality of the mycielski graph.
\newblock {\em Lecture Notes in Mathematics}, 406:243--246, 1979.

\bibitem{CR06}
J.~Coles and S.~Radziszowski.
\newblock Computing the {F}olkman number ${F}_v(2,2,3;4)$.
\newblock {\em Journal of Combinatorial Mathematics and Combinatorial
  Computing}, 58:13--22, 2006.

\bibitem{DLSX13}
F.~Deng, M.~Liang, Z.~Shao, and X.~Xu.
\newblock Upper bounds for the vertex {F}olkman number ${F}_v(3, 3, 3; 4)$ and
  ${F}_v(3, 3, 3; 5)$.
\newblock {\em ARS Combinatoria}, 112:249--256, 2013.

\bibitem{DR08}
A.~Dudek and V.~R{\"o}dl.
\newblock New upper bound on vertex {F}olkman numbers.
\newblock {\em Lecture Notes in Computer Science}, 4557:473--478, 2008.

\bibitem{Fol70}
J.~{F}olkman.
\newblock Graphs with monochromatic complete subgraphs in every edge coloring.
\newblock {\em SIAM Journal on Applied Mathematics}, 18:19--24, 1970.

\bibitem{JR95}
T.~Jensen and G.~Royle.
\newblock Small graphs with chromatic number 5: a computer research.
\newblock {\em Journal of Graph Theory}, 19:107--116, 1995.

\bibitem{KN06c}
N.~Kolev and N.~Nenov.
\newblock New recurrent inequality on a class of vertex {F}olkman numbers.
\newblock In {\em Proceedings of the 35th Spring Conference of the Union of
  Bulgarian Mathematicians}, pages 164--168, April 2006.

\bibitem{KN06a}
N.~Kolev and N.~Nenov.
\newblock New upper bound for a class of vertex {F}olkman numbers.
\newblock {\em The Electronic Journal of Combinatorics}, 13, 2006.

\bibitem{LR11}
J.~Lathrop and S.~Radziszowski.
\newblock Computing the {F}olkman number ${F}_v(2, 2, 2, 2, 2; 4)$.
\newblock {\em Journal of Combinatorial Mathematics and Combinatorial
  Computing}, 78:213--222, 2011.

\bibitem{LRU01}
T.~Luczak, A.~Ruci{\'n}ski, and S.~Urba{\'n}ski.
\newblock On minimal vertex {F}olkman graphs.
\newblock {\em Discrete Mathematics}, 236:245--262, 2001.

\bibitem{LU96}
T.~Luczak and S.~Urba{\'n}ski.
\newblock A note on restricted vertex {R}amsey numbers.
\newblock {\em Periodica Mathematica Hungarica}, 33:101--103, 1996.

\bibitem{McK90}
B.~McKay.
\newblock nauty user's guide (version 2.4).
\newblock Technical report, Department of Computer Science, Australian National
  University, 1990.
\newblock The latest version of the software is available at
  \url{http://cs.anu.edu.au/~bdm/nauty/}.

\bibitem{McK_r}
B.D. McKay.
\newblock \url{http://cs.anu.edu.au/~bdm/data/{R}amsey.html}.

\bibitem{McK_r36_13}
B.D. McKay.
\newblock \url{http://cs.anu.edu.au/~bdm/data/r36_13.g6.gz}.

\bibitem{McK_r36_16}
B.D. McKay.
\newblock \url{http://cs.anu.edu.au/~bdm/data/r36_16.g6.gz}.

\bibitem{Myc55}
J.~Mycielski.
\newblock Sur le coloriage des graphes.
\newblock {\em Colloquium Mathematicum}, 3:161--162, 1955.

\bibitem{Nen81}
N.~Nenov.
\newblock An example of a 15-vertex (3, 3)-{R}amsey graph with clique number 4.
\newblock {\em Comptes rendus de l'Academie bulgare des Sciences},
  34(11):1487--1489, 1981.
\newblock (in Russian).

\bibitem{Nen83}
N.~Nenov.
\newblock On the {Z}ykov numbers and some its applications to {R}amsey theory.
\newblock {\em Serdica Bulgariacae Mathematicae}, 9:161--167, 1983.
\newblock (in Russian).

\bibitem{Nen84}
N.~Nenov.
\newblock The chromatic number of any 10-vertex graph without 4-cliques is at
  most 4.
\newblock {\em Comptes rendus de l'Academie bulgare des Sciences}, 37:301--304,
  1984.
\newblock (in Russian).

\bibitem{Nen98}
N.~Nenov.
\newblock On the small graphs with chromatic number 5 without 4-cliques.
\newblock {\em Discrete Mathematics}, 188:297--298, 1998.

\bibitem{Nen00}
N.~Nenov.
\newblock On a class of vertex {F}olkman graphs.
\newblock {\em Ann. Univ. Sofia Fac. Math. Inform.}, 94:15--25, 2000.

\bibitem{Nen01}
N.~Nenov.
\newblock A generalization of a result of {D}irac.
\newblock {\em Ann. Univ. Sofia Fac. Math. Inform.}, 95:59--69, 2001.

\bibitem{Nen02}
N.~Nenov.
\newblock On a class of vertex {F}olkman numbers.
\newblock {\em Serdica Mathematical Journal}, 28:219--232, 2002.

\bibitem{PRU99}
K.~Piwakowski, S.~Radziszowski, and S.~Urbanski.
\newblock Computation of the {F}olkman number ${F}_e(3, 3; 5)$.
\newblock {\em Journal of Graph Theory}, 32:41--49, 1999.

\bibitem{Rad06}
S.~Radziszowski.
\newblock Small {R}amsey numbers.
\newblock {\em The Electronic Journal of Combinatorics}, Dynamic Survey
  revision 14, January 12 2014.

\bibitem{SLPX12}
Z.~Shao, M.~Liang, L.~Pan, and X.~Xu.
\newblock Computation of the {F}olkman number ${F}_v(3, 5; 6)$.
\newblock {\em Journal of Combinatorial Mathematics and Combinatorial
  Computing}, 81:11--17, 2012.

\bibitem{SXL09}
Z.~Shao, X.~Xu, and H.~Luo.
\newblock Bounds for two multicolor vertex {F}olkman numbers.
\newblock {\em Application Research of Computers}, 3:834--835, 2009.
\newblock (in Chinese).

\bibitem{SXP09}
Z.~Shao, X.~Xu, and L.~Pan.
\newblock New upper bounds for vertex {F}olkman numbers ${F}_v(3, k; k + 1)$.
\newblock {\em Utilitas Mathematica}, 80:91--96, 2009.

\bibitem{W01}
D.~West.
\newblock {\em Introduction to Graph Theory}.
\newblock Prentice Hall, Inc., Upper Saddle River, 2 edition, 2001.

\bibitem{XLS10}
X.~Xu, H.~Luo, and Z.~Shao.
\newblock Upper and lower bounds for ${F}_v(4, 4; 5)$.
\newblock {\em Electronic Journal of Combinatorics}, 17, 2010.

\end{thebibliography}

% -----------------------------------------------------------

\end{document}